\newtheorem{theorem}{Theorem}[section]
\newtheorem*{definition*}{Definition}
\newtheorem{proposition}[theorem]{Proposition}
\newtheorem{lemma}[theorem]{Lemma}
\newtheorem{remark}[theorem]{Remark}
\newtheorem*{remark*}{Remark}
\newtheorem*{remarks*}{Remarks}
\newtheorem{ass}[theorem]{Assumption}
\newtheorem*{notation*}{Notation}
\newtheorem*{ex*}{Example}
\newtheorem*{exs*}{Examples}
\newtheorem*{app*}{Application}
\newtheorem{conjecture*}{Conjecture}
\def\ts{\thinspace}
\newcommand{\methodUA}{New Method}
\title{
A uniformly accurate scheme for the numerical integration of penalized Langevin dynamics
}
\author{
Adrien Laurent\textsuperscript{1}
}
\begin{document}
\footnotetext[1]{
Department of Mathematics, University of Bergen, Norway. Adrien.Laurent@uib.no.}

\maketitle

\begin{abstract}
In molecular dynamics, penalized overdamped Langevin dynamics are used to model the motion of a set of particles that follow constraints up to a parameter~$\varepsilon$.
The most used schemes for simulating these dynamics are the Euler integrator in~$\R^d$ and the constrained Euler integrator. Both have weak order one of accuracy, but work properly only in specific regimes depending on the size of the parameter~$\varepsilon$.
We propose in this paper a new consistent method with an accuracy independent of~$\varepsilon$ for solving penalized dynamics on a manifold of any dimension. Moreover, this method converges to the constrained Euler scheme when~$\varepsilon$ goes to zero.
The numerical experiments confirm the theoretical findings, in the context of weak convergence and for the invariant measure, on a torus and on the orthogonal group in high dimension and high codimension.

\smallskip

\noindent
{\it Keywords:\,} constrained stochastic differential equations, penalized Langevin dynamics, manifolds, uniform accuracy, invariant measure.
\smallskip

\noindent
{\it AMS subject classification (2020):\,} 60H35, 70H45, 37M25.
 \end{abstract}

\section{Introduction}

In molecular dynamics, the overdamped Langevin equation in~$\R^d$ is often used for modeling the behavior of a large set of particles in a high friction regime. It is given by
\begin{equation}
\label{equation:Langevin}
dX(t)=f(X(t))dt +\sigma dW(t),\quad X(0)=X_0,
\end{equation}
where~$f$ is a smooth Lipschitz function (typically of the form~$f=-\nabla V$ for~$V$ a smooth potential),~$\sigma>0$ is a constant scalar, and~$W$ is a standard~$d$-dimensional Brownian motion in~$\R^d$ on a probability space equipped with a filtration~$(\FF_t)$ and fulfilling the usual assumptions.
If the particles are subject to smooth constraints~$\zeta\colon\R^d\rightarrow\R^q$, such as strong covalent bonds between atoms or fixed angles in molecules, the dynamics follow the constrained overdamped Langevin equation
\begin{equation}
\label{equation:projected_Langevin}
dX^0(t)=\Pi_\MM(X^0(t)) f(X^0(t))dt +\sigma \Pi_\MM(X^0(t)) \circ dW(t),\quad X^0(0)=X_0\in \MM,
\end{equation}
where the solution lies on the manifold~$\MM=\{x\in\R^d, \zeta(x)=0\}$ with codimension~$q$ thanks to~$\Pi_\MM\colon\R^d\rightarrow \R^{d\times d}$, the orthogonal projection on the tangent bundle of the manifold~$\MM$.

In physical applications, constrained systems are often used as a limit model for stiff equations. For instance, in the dynamics of a diatomic molecule, the distance between the two atoms oscillates around an average length, called the bond length (see, for instance,~\cite[Sect.\ts 1.2.1]{Lelievre10fec} on the interactions of particles). One can work with a simpler constrained dynamics where the distance between the atoms is fixed as a constraint, or with the original (possibly stiff) dynamics in~$\R^d$. We refer the reader to~\cite{Plechac09imm,Lelievre10fec}, and references therein, for discussions on the use of constraints and penalizations in molecular dynamics.
For overdamped Langevin dynamics~\eqref{equation:Langevin}, choosing a function~$\widetilde{f}=-\nabla \widetilde{V}$ with the potential
$$\widetilde{V}=V-\frac{\sigma^2}{4}\ln(\det(G))+\frac{1}{2\varepsilon}\abs{\zeta}^2$$
gives penalized Langevin dynamics~\cite{Ciccotti08pod} of the form
\begin{equation}
\label{equation:modified_Langevin_epsilon}
dY^\varepsilon(t)=f(Y^\varepsilon(t)) dt +\sigma dW(t) +\frac{\sigma^2}{4}\nabla\ln(\det(G))(Y^\varepsilon(t))dt -\frac{1}{\varepsilon}(g \zeta)(Y^\varepsilon(t))dt,
\end{equation}
where we fix~$Y^\varepsilon(0)=X_0$, the parameter~$\varepsilon>0$ is fixed with arbitrary size,~$f=-\nabla V$,~$g=\nabla \zeta\colon\R^d\rightarrow\R^{d\times q}$, and~$G=g^T g\colon \R^d\rightarrow\R^{q\times q}$ is the Gram matrix.
It was shown in~\cite[Appx.\ts C]{Ciccotti08pod} that the solution~$Y^\varepsilon$ of~\eqref{equation:modified_Langevin_epsilon} converges strongly to the solution~$X^0$ of the constrained dynamics~\eqref{equation:projected_Langevin} if~$X_0\in\MM$.
The additional term~$\frac{\sigma^2}{4}\nabla\ln(\det(G))$ is a correction term (called the Fixman correction) that is needed to obtain the convergence to the constrained dynamics~\eqref{equation:projected_Langevin} (see~\cite[Sect.\ts 3.2.3.4]{Lelievre10fec} and references therein).
Thus, for~$\varepsilon$ small, the trajectory of the solution of~\eqref{equation:modified_Langevin_epsilon} lies in the vicinity of the manifold~$\MM$. This penalization can also appear naturally when simulating Langevin dynamics with a stiff potential (see, for instance,~\cite[Sect.\ts 5.1]{Vilmart15pif}). One is then interested in numerical schemes that are robust with respect to the parameter~$\varepsilon$ and that lie on the manifold~$\MM$ in the limit~$\varepsilon\rightarrow 0$.
In this paper, we study the following similar penalized dynamics in~$\R^d$ to simulate trajectories in a vicinity of the manifold~$\MM$:
\begin{align}
\label{equation:Langevin_epsilon}
dX^\varepsilon(t)&=f(X^\varepsilon(t)) dt +\sigma dW(t) +\frac{\sigma^2}{4}\nabla\ln(\det(G))(X^\varepsilon(t))dt 
-\frac{1}{\varepsilon}(g G^{-1}\zeta)(X^\varepsilon(t))dt,
\end{align}
where~$X^\varepsilon(0)=X_0$. It is a simpler version of~\eqref{equation:modified_Langevin_epsilon} that also evolves in a vicinity of the manifold~$\MM$ in the limit~$\varepsilon\rightarrow 0$.
One result of this paper is the strong convergence of the solution~$X^\varepsilon$ of~\eqref{equation:Langevin_epsilon} to the solution~$X^0$ of~\eqref{equation:projected_Langevin} if~$X_0\in\MM$.
We mention that in the deterministic setting, that is, when~$\sigma=0$, equation~\eqref{equation:Langevin_epsilon} is a singular perturbation problem, and it converges to a differential algebraic equation (DAE) of index two in the limit~$\varepsilon\rightarrow 0$ (see~\cite[Chaps.\ts VI-VII]{Hairer10sod}).
We propose in this article a method that is robust with respect to the parameter~$\varepsilon$ for solving equations of the form~\eqref{equation:Langevin_epsilon}, and we leave the creation of robust integrators for solving~\eqref{equation:modified_Langevin_epsilon} for future work for the sake of clarity.

There are different ways to approximate the solution of the dynamics~\eqref{equation:Langevin_epsilon}.
A strong approximation focuses on approximating the realization of a single trajectory of~\eqref{equation:Langevin_epsilon} for a given realization of the Wiener process~$W$.
A weak approximation approximates the average of functionals of the solution at a fixed time~$T$, that is, quantities of the form~$\E[\phi(X^\varepsilon(T))]$ for~$\phi$ a smooth test function.
In addition, under growth and smoothness assumptions on the vector fields in~\eqref{equation:Langevin_epsilon} (see, for instance,~\cite{Hasminskii80sso}), the dynamics~\eqref{equation:Langevin_epsilon} naturally satisfy an ergodicity property; that is, there exists a unique invariant measure~$d\mu_\infty^\varepsilon$ in~$\R^d$ that has a density~$\rho_\infty^\varepsilon$ with respect to the Lebesgue measure, such that for all test functions~$\phi$,
$$
\lim_{T\to\infty}\frac{1}{T}\int_0^T \phi(X(t)) dt= \int_{\R^d} \phi(x) d\mu_\infty^\varepsilon(x)\quad \text{almost surely}.
$$
An approximation for the invariant measure focuses on approximating the average of a functional in the stationary state, that is, the quantity~$\int_{\R^d} \phi(x) d\mu_\infty^\varepsilon(x)$.
This is a computational challenge when the dimension~$d$ is high, which is the case in the context of molecular dynamics where the dimension is proportional to the number of particles, as a standard quadrature formula becomes prohibitively expensive in high dimension.
We emphasize that the invariant measure~$\mu_\infty^\varepsilon$ becomes singular with respect to the Lebesgue measure on~$\R^d$ in the limit~$\varepsilon\rightarrow 0$, and tends weakly as~$\varepsilon\rightarrow 0$ to~$d\mu_\infty^0$, a measure that is absolutely continuous to~$d\sigma_\MM$, the canonical measure on~$\MM$ induced by the Euclidean metric of~$\R^d$.
In this paper, we propose weak convergence results for a new uniformly accurate integrator for solving~\eqref{equation:Langevin_epsilon}, and numerical experiments in the weak context and for the invariant measure, as we recall that a scheme of weak order~$r$ automatically has order~$p\geq r$ for the invariant measure (see, for instance,~\cite{Mattingly02efs}).

The most used discretization for solving~\eqref{equation:Langevin_epsilon} is the explicit Euler integrator in~$\R^d$ (see~\cite{Ciccotti05bms,Lelievre08adc,Lelievre10fec,Lelievre12ldw}, for instance),
\begin{equation}
\label{equation:Euler_EEE}
X_{n+1}=X_n+\sqrt{h}\sigma \xi+h f(X_n) +h \frac{\sigma^2}{4}\nabla\ln(\det(G))(X_n) -\frac{h}{\varepsilon}(g G^{-1}\zeta)(X_n).
\end{equation}
This integrator has weak order one of accuracy, but it faces some severe stepsize restriction due to its instability, typically of the form~$h\ll \varepsilon$, in order to be accurate in the regime~$\varepsilon\rightarrow 0$.
Since the solution~$X^\varepsilon(t)$ of~\eqref{equation:Langevin_epsilon} converges to the solution~$X^0(t)$ of~\eqref{equation:projected_Langevin} when~$\varepsilon\rightarrow 0$, one can use integrators for the limit equation~\eqref{equation:projected_Langevin} and apply them to solve the original problem~\eqref{equation:Langevin_epsilon} when~$\varepsilon$ is close to zero.
Indeed, one can prove that the solution $X^\varepsilon(t)$ of~\eqref{equation:Langevin_epsilon} stays at distance $\OO(\sqrt{\varepsilon})$ of the constrained solution $X^0(t)$ of~\eqref{equation:projected_Langevin} (see Theorem \ref{theorem:CV_penalized_constrained}).
Thus, if the timestep of the integrator is small enough and satisfies~$\varepsilon\ll h$, then this integrator is consistent for solving~\eqref{equation:Langevin_epsilon}.
The alternative for the discretization of~\eqref{equation:projected_Langevin} on the manifold of the explicit Euler scheme~\eqref{equation:Euler_EEE} is the constrained Euler scheme
\begin{equation}
\label{equation:Euler_Explicit_constrained}
X^0_{n+1}=X^0_n+\sqrt{h}\sigma \xi +hf(X^0_n) + g(X^0_n)\lambda^0_{n+1}, \quad \zeta(X^0_{n+1})=0,
\end{equation}
where~$\lambda^0_{n+1}\in \R^q$ acts as a Lagrange multiplier, and is entirely determined by the constraint~$\zeta(X^0_{n+1})=0$. This integrator has weak order one for solving~\eqref{equation:projected_Langevin} (see~\cite[Sect.\ts 3.2.4]{Lelievre10fec}) and it lies on the manifold~$\MM$. It is a consistent approximation of~\eqref{equation:Langevin_epsilon} if~$\varepsilon$ is close to zero and~$\varepsilon\ll h$.
This integrator is, however, not appropriate for solving~\eqref{equation:Langevin_epsilon} if the size of~$\varepsilon$ is of the order of one, since the exact solution does not evolve in a neighborhood of the manifold in this regime.
We mention a few other techniques to integrate numerically~\eqref{equation:Langevin_epsilon} or~\eqref{equation:projected_Langevin}.
In~\cite{Laurent20eab,Laurent21ocf}, high order Runge-Kutta methods are proposed for sampling the invariant measure in~$\R^d$ and on manifolds.
The paper~\cite{Lelievre19hmc} presents a constrained integrator based on the RATTLE scheme (see~\cite{Ryckaert77nio,Andersen83rav,Hairer10sod}) in the context of the underdamped Langevin dynamics.
Some of the previously cited discretizations can be combined with Metropolis-Hastings rejection procedures~\cite{Metropolis53eos,Hastings70mcs}, as done, for instance, in~\cite{Girolami11rml,Brubaker12afo,Lelievre12ldw,Zappa18mco,Lelievre19hmc}.
As for the Euler integrators~\eqref{equation:Euler_EEE}-\eqref{equation:Euler_Explicit_constrained}, all of the previously mentioned methods are consistent provided that the parameter~$\varepsilon$ and the timestep satisfy~$h\ll \varepsilon$ in the context of methods in~$\R^d$, and satisfy~$\varepsilon\ll h$ in the context of methods on the manifold~$\MM$. When applied in the regime where~$\varepsilon$ and~$h$ share the same order of magnitude, the accuracy of these methods quickly deteriorates, and they may face stability issues.

In past decades, different solutions were proposed for treating the loss of accuracy in the intermediate regime~$h\sim \varepsilon$ in the context of multiscale problems with the help of uniformly accurate (UA) methods.
These methods are capable of solving dynamics indexed by a possibly stiff parameter~$\varepsilon$ with an accuracy and a cost both independent of~$\varepsilon$. A uniformly accurate method is automatically asymptotic-preserving (AP), that is, it converges in the two regimes~$\varepsilon\rightarrow 0$ and~$\varepsilon\sim 1$, but the converse is not true in general.
We refer the reader to the review~\cite{Jin12aps} and references therein for examples of AP integrators for solving multiscale problems.
We mention in particular the paper~\cite{Plechac09imm}, which proposes a penalized Hamiltonian dynamics, and AP discretizations for solving it, and the paper~\cite{Brehier20oap} that gives an AP scheme for the approximation of a class of multiscale SDEs.
In~\cite{Cohen12cao,Vilmart14wso,Laurent20mif}, trigonometric and multirevolution integrators are considered for solving highly oscillatory SDEs (see also the deterministic works~\cite{Melendo97ana,Calvo04aco,Calvo07oem,Chartier14mrc}).
We mention the recent papers~\cite{Chartier15uan,Chartier20anc,Chartier20uan,Almuslimani21uas} (see also the references therein) that introduce uniformly accurate methods for solving a variety of multiscale problems.
There is a rich literature on AP and UA methods, but, to the best of our knowledge, the problem we study here and the techniques we consider are new.
We propose in this paper a new consistent integrator with uniform accuracy and uniform cost for solving penalized Langevin dynamics, that is, a method for solving~\eqref{equation:Langevin_epsilon} whose accuracy and cost do not depend on the parameter~$\varepsilon$.




The article is organized as follows.
Section~\ref{section:main_results} is devoted to the presentation of the new integrator and to the main convergence results.
In Section~\ref{section:proofs}, we build a weak asymptotic expansion of the solution of~\eqref{equation:Langevin_epsilon} that is uniform in~$\varepsilon$, and we use it for proving the uniform accuracy of our integrator.
We compare in Section~\ref{section:numerical_experiments} the new integrator with the explicit Euler scheme in~$\R^d$~\eqref{equation:Euler_EEE} and the constrained Euler scheme~\eqref{equation:Euler_Explicit_constrained} on~$\MM$ in numerical experiments on a torus and on the orthogonal group to confirm its order of convergence in the weak context and for sampling the invariant measure.
Finally, we present some possible extensions and future work in Section~\ref{section:future_work}.

\section{Uniformly accurate integrator for penalized Langevin dynamics}
\label{section:main_results}

In this section, we present the new uniformly accurate integrator and the main convergence results of this paper. The proofs are postponed to Section~\ref{section:proofs}.
Let us first lay down a few notations and assumptions. We assume in the rest of the article that~$\MM=\{x\in\R^d, \zeta(x)=0\}$ is a compact and smooth manifold of codimension~$q\geq 1$ embedded in~$\R^d$, where the constraints are given by the smooth map~$\zeta \colon \R^d\rightarrow\R^q$. We write~$g=\nabla \zeta\colon\R^d\rightarrow\R^{d\times q}$ and we assume that the Gram matrix~$G(x)=g^T(x) g(x)\in\R^{q\times q}$ is invertible for all~$x$ in~$\MM$. With these notations, the projection~$\Pi_\MM$ on the tangent bundle is given by~$\Pi_\MM(x)= I_d-G^{-1}(x) g(x) g^T(x)$.
The test functions typically belong to a subspace of~$\CC^\infty_P(\R^d,\R)$, the vector space of~$\CC^\infty$ functions~$\phi(x)$ such that all partial derivatives up to any order have a polynomial growth of the form
$$\big|\phi^{(k)}(x)\big|\leq C(1+\abs{x}^K),$$
where the constants~$C$ and~$K$ are independent of~$x\in \R^d$ (but can depend on~$k$), and where we denote by~$\abs{x}=(x^Tx)^{1/2}$ the Euclidean norm in~$\R^d$.
Similarly, we denote by~$\CC^p_P(\R^d,\R)$ the space of~$\CC^p$ functions whose partial derivatives up to order~$p$ have polynomial growth.
Letting~$\varphi\colon \R^d\rightarrow \R^{d\times k}$, we use the following notations for differentials: for all vectors~$x$,~$a^1$, \dots,~$a^m\in\R^d$, we denote
$$\varphi^{(m)}(x)(a^1,\dots,a^m)
=\sum_{i_1,\dots,i_m=1}^d \frac{\partial^m\varphi}{\partial x_{i_1}\dots\partial x_{i_m}}(x) \ts a^1_{i_1}\dots a^m_{i_m}.$$
For the sake of clarity, we will often drop the coefficient~$x$, and if~$m=1$, we also use the notation~$\varphi'$ for the Jacobian matrix of~$\varphi$.
Moreover, for~$(e_i)$ the canonical basis of~$\R^d$, we write
$$
\Delta \varphi(x)=\sum_{i=1}^d \varphi''(x)(e_i,e_i)\quad \text{and} \quad (\Div \varphi(x))_j=\sum_{i=1}^d \varphi_{i j}'(x)(e_i).
$$
In the rest of the paper, we make the following assumption in the spirit of the regularity assumptions made in~\cite[Appx.\ts C]{Ciccotti08pod} and~\cite[Chap.\ts 2]{Milstein04snf}.
\begin{ass}
\label{assumption:regularity_ass}
The map~$f$ is bounded, is Lipschitz and lies in~$\CC^3_P$. The maps~$g$ and~$g'$ are bounded in~$\R^d$, and there exists~$c>0$ such that, for~$x$,~$y\in \R^d$,
\begin{equation}
\label{equation:assumption_G_modified}
\abs{G_y(x)}\geq c(1+\abs{y})^{-1},\quad \text{where} \quad G_y(x)=\int_0^1 g^T(x+\tau y)d\tau g(x).
\end{equation}
In addition, there exists a smooth change of coordinate
\begin{align*}
\psi\colon&\R^d\rightarrow \R^d\\
& x \mapsto \begin{pmatrix}
\varphi(x)\\\zeta(x)
\end{pmatrix}
\end{align*}
where~$\varphi\colon\R^d\rightarrow\R^{d-q}$ satisfies~$\varphi'g=0$.
The map~$\psi$ lies in~$\CC^5_P$ and is invertible,~$\psi'$ and~$\psi''$ are Lipschitz and there exist two constants~$c$,~$C>0$ such that~$c\leq\abs{\psi'(x)}\leq C$.
\end{ass}

\begin{remark}
Assumption~\ref{assumption:regularity_ass} is almost the same as the one given in~\cite[Appx.\ts C]{Ciccotti08pod} to prove the strong convergence of the dynamics~\eqref{equation:modified_Langevin_epsilon} to the constrained dynamics~\eqref{equation:projected_Langevin}. The difference lies in the additional estimate~\eqref{equation:assumption_G_modified} that replaces the weaker assumption that the Gram matrix~$G(x)=G_0(x)$ is invertible on~$\MM$.
We use the estimate~\eqref{equation:assumption_G_modified} for obtaining a uniform expansion of the Lagrange multipliers in the new method and for proving that the new method evolves in a neighborhood of the manifold (see Lemma~\ref{lemma:uniform_expansion_lambda}).
Note that the existence of the change of coordinate~$\psi$ is always valid in a neighborhood of the smooth manifold~$\MM$. The same goes for the estimate~\eqref{equation:assumption_G_modified} for~$x$ in a neighborhood of~$\MM$ and~$y$ in a ball centered on zero.
Assumption~\ref{assumption:regularity_ass} is valid in particular if~$\MM$ is a vector subspace of~$\R^d$, but it is quite restrictive. It would be interesting to extend the results of this paper under simpler regularity assumptions made only on the manifold, as numerical experiments hint that the results presented in this paper still stand without global assumptions. This is matter for future work.
\end{remark}

Under Assumption~\ref{assumption:regularity_ass}, the problems~\eqref{equation:projected_Langevin} and~\eqref{equation:Langevin_epsilon} are well posed, and we obtain the strong convergence of the penalized dynamics~\eqref{equation:Langevin_epsilon} to the constrained dynamics~\eqref{equation:projected_Langevin}.
\begin{theorem}
\label{theorem:CV_penalized_constrained}
Under Assumption~\ref{assumption:regularity_ass}, the solution~$X^\varepsilon$ of the penalized dynamics~\eqref{equation:Langevin_epsilon} converges strongly to~$X^0$, the solution of the constrained dynamics~\eqref{equation:projected_Langevin}; that is, for all~$t\leq T$, there exists a constant~$C>0$ such that, for all~$\varepsilon>0$,
$$\sup_{t\leq T}\E\Big[\abs{X^\varepsilon(t)-X^0(t)}^2\Big]\leq C\varepsilon.$$
Moreover,~$\zeta(X^\varepsilon(t))$ satisfies
$$
\sup_{t\leq T}\E\Big[\abs{\zeta(X^\varepsilon(t))}^2\Big]\leq C\varepsilon.
$$
\end{theorem}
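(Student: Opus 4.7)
The plan is to establish the constraint bound first, then leverage it to obtain the strong convergence estimate.

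For the bound on $\zeta(X^\varepsilon)$, the natural move is to apply Itô's formula to $|\zeta(X^\varepsilon(t))|^2$. The identity $g^T g = G$ is the key structural feature: it makes the penalization drift contribute exactly
\begin{equation*}
-\frac{2}{\varepsilon}\zeta(X^\varepsilon)^T g^T(X^\varepsilon) g(X^\varepsilon) G^{-1}(X^\varepsilon) \zeta(X^\varepsilon) dt = -\frac{2}{\varepsilon}|\zeta(X^\varepsilon)|^2 dt,
\end{equation*}
which is the stabilizing mechanism. The remaining drift terms (involving $g^T f$, $g^T \nabla\ln\det G$, and the second-order Itô term) and the quadratic variation of the martingale part $\sigma \zeta^T g^T dW$ are bounded under Assumption~\ref{assumption:regularity_ass}. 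Taking expectation, using Young's inequality to absorb the cross terms into a fraction of the stabilizing term, and using $\zeta(X_0)=0$, one obtains a differential inequality of the form $\tfrac{d}{dt}\E[|\zeta(X^\varepsilon)|^2] \leq -\tfrac{1}{\varepsilon}\E[|\zeta(X^\varepsilon)|^2] + C$, whose solution yields $\E[|\zeta(X^\varepsilon(t))|^2] \leq C\varepsilon(1 - e^{-t/\varepsilon}) \leq C\varepsilon$ uniformly on $[0,T]$.

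For the strong convergence, the idea is to exploit the tangent-normal decomposition $\psi = (\varphi,\zeta)$ of Assumption~\ref{assumption:regularity_ass}. Because $\varphi' g = 0$, Itô's formula applied to $\varphi(X^\varepsilon)$ eliminates the penalization term entirely, producing a closed SDE driven by bounded Lipschitz coefficients. On the other hand, $\varphi(X^0)$ satisfies the corresponding SDE obtained by converting the Stratonovich equation~\eqref{equation:projected_Langevin} into Itô form. The role of the Fixman correction $\tfrac{\sigma^2}{4}\nabla\ln\det G$ in~\eqref{equation:Langevin_epsilon} is precisely to ensure that, after the push-forward by $\varphi$, the drift of $\varphi(X^\varepsilon)$ matches that of $\varphi(X^0)$ up to a perturbation that is Lipschitz in $X^\varepsilon - X^0$ and a residual term pointwise bounded by $|\zeta(X^\varepsilon)|$. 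Applying Itô to $|\varphi(X^\varepsilon) - \varphi(X^0)|^2$, taking expectation, using Cauchy--Schwarz and Young's inequality together with the first-step bound, one arrives at
\begin{equation*}
\frac{d}{dt}\E\bigl[|\varphi(X^\varepsilon(t)) - \varphi(X^0(t))|^2\bigr] \leq C\,\E\bigl[|\varphi(X^\varepsilon(t)) - \varphi(X^0(t))|^2\bigr] + C\varepsilon,
\end{equation*}
and Gronwall's lemma gives an $\OO(\varepsilon)$ bound on the tangential difference. Combining with $\E[|\zeta(X^\varepsilon)|^2]\leq C\varepsilon$ and the bi-Lipschitz bound $c \leq |\psi'| \leq C$ from Assumption~\ref{assumption:regularity_ass} transfers the estimate back to $\E[|X^\varepsilon(t) - X^0(t)|^2] \leq C\varepsilon$.

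I expect the main obstacle to be the bookkeeping in the second step: verifying that the Stratonovich-to-Itô correction for~\eqref{equation:projected_Langevin}, once composed with $\varphi$, cancels exactly against the Fixman drift that appears in~\eqref{equation:Langevin_epsilon}, modulo terms that are either Lipschitz in the difference or controlled pointwise by $|\zeta(X^\varepsilon)|$. This is where the global regularity of $\psi$ and the strengthened estimate~\eqref{equation:assumption_G_modified} in Assumption~\ref{assumption:regularity_ass} become essential; the weaker assumption that $G$ be invertible on $\MM$ alone would not suffice for this uniform-in-$\varepsilon$ control.
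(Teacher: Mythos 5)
Your proof is correct and follows the same structural skeleton as the paper's: push forward through $\psi=(\varphi,\zeta)$, exploit $\varphi'g=0$ to kill the penalization in the tangential block, compare the $\varphi$-SDEs of $X^\varepsilon$ and $X^0$, and close with Gronwall and the bi-Lipschitz bound on $\psi$. The one genuine deviation is in how you obtain $\E[|\zeta(X^\varepsilon(t))|^2]\leq C\varepsilon$: you apply It\^o to $|\zeta(X^\varepsilon)|^2$, extract the dissipative term $-\tfrac{2}{\varepsilon}|\zeta|^2$, and Young's-absorb the cross terms to get the differential inequality $\tfrac{d}{dt}\E[|\zeta|^2]\leq -\tfrac1\varepsilon\E[|\zeta|^2]+C$. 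The paper instead writes $\zeta(X^\varepsilon)$ via variation of constants (Eq.~\eqref{equation:zeta_Ito_formula_integral}) and bounds the resulting stochastic and Riemann integrals directly with the It\^o isometry and the boundedness of the drift. The two are equivalent in strength; your energy-estimate version is arguably more self-contained, while the paper's Duhamel form is reused later (Lemma~\ref{lemma:strong_expansion_penalized_dynamic}) to extract the precise $e^{-h/\varepsilon}$ prefactors that the scheme mimics, so the paper's choice is not incidental.

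One claim in your last paragraph is off: the strengthened estimate~\eqref{equation:assumption_G_modified} on $G_y(x)$ is \emph{not} used in the paper's proof of Theorem~\ref{theorem:CV_penalized_constrained}, and it is not needed for it. That inequality controls the Lagrange multipliers of the \emph{numerical} scheme (Lemma~\ref{lemma:uniform_expansion_lambda}, Theorems~\ref{theorem:convergence_integrator_epsilon} and~\ref{theorem:uniform_fixed_point_problem}); the paper's Remark~2.2 says exactly this. For the continuous convergence result, what is load-bearing is the existence of the global coordinate map $\psi$ with $\varphi'g=0$, the uniform bi-Lipschitz bounds on $\psi$, and the boundedness/Lipschitz regularity of $f$, $g$, $g'$, and the Fixman drift — not~\eqref{equation:assumption_G_modified}. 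Likewise, the Stratonovich-to-It\^o bookkeeping that you flag as the "main obstacle" reduces in the paper to a single stated identity,
$\tfrac12\varphi'\nabla\ln\det G + \Delta\varphi = \sum_i \varphi'(\Pi_\MM'(\Pi_\MM e_i)e_i) + \sum_i \varphi''(\Pi_\MM e_i,\Pi_\MM e_i)$,
which makes the $\varphi$-drifts of $X^\varepsilon$ and $X^0$ literally the same function; there is then no residual term "bounded by $|\zeta(X^\varepsilon)|$" in the tangential block — the $C\varepsilon$ in your Gronwall inequality comes solely from converting the Lipschitz bound in $|X^\varepsilon-X^0|$ back through $\psi$ and using the first-step estimate on $\E[|\zeta(X^\varepsilon)|^2]$.
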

This result was first introduced in~\cite[Appx.\ts C]{Ciccotti08pod} for slightly different penalized dynamics. The proof is almost identical, but we present it in Appx.\ts~\ref{section:proof_CV_penalized_constrained} for the sake of completeness. In the deterministic setting with~$\sigma=0$, the convergence to the manifold is of order~$1$ in~$\varepsilon$ instead of order~$1/2$.
We introduce the following additional assumption.
\begin{ass}
\label{assumption:regularity_ass_stronger}
There exists~$c>0$ such that, for~$x$,~$y\in \R^d$,
$$
\abs{G_y(x)}\geq c,\quad \text{where} \quad G_y(x)=\int_0^1 g^T(x+\tau y)d\tau g(x).
$$
\end{ass}
Assumption~\eqref{assumption:regularity_ass_stronger} is a stronger version of the inequality~\eqref{equation:assumption_G_modified} and is in the spirit of the concept of admissible Lagrange multipliers~\cite{Lelievre19hmc}. It is always satisfied for~$x$ in a neighborhood of the manifold~$\MM$ and~$y$ in a ball centered on zero if we assume that the Gram matrix~$G(x)=G_0(x)$ is invertible on~$\MM$. 
We emphasize that we do not need this assumption for proving the uniform accuracy property of the new method, but we use it for obtaining uniform estimates in the regime~$\varepsilon\rightarrow 0$ and on the numerical implementation of the uniformly accurate method.

We introduce the new integrator for approximating the penalized dynamics~\eqref{equation:Langevin_epsilon} with cost and accuracy independent of the parameter~$\varepsilon$, and a cost comparable to that of the constrained Euler scheme~\eqref{equation:Euler_Explicit_constrained} in terms of the number of evaluations of the functions~$f$,~$\zeta$,~$g$, and~$g'$.

\begin{algorithm}[H]
\renewcommand{\thealgorithm}{\methodUA}
\caption{(Uniform discretization of penalized overdamped Langevin dynamics)}
\begin{algorithmic}
\STATE~$X_0^\varepsilon=X_0$ $\in \MM$
\FOR{$n\geq 0$}
\STATE
\vskip-4ex
\begin{align}
X_{n+1}^\varepsilon&=X_n^\varepsilon
+\sqrt{h}\sigma \xi_n
+hf(X_n^\varepsilon)
+\frac{(1-e^{-h/\varepsilon})^2}{2}(g'(g G^{-1} \zeta) G^{-1} \zeta)(X_n^\varepsilon) \nonumber\\
\label{equation:UA_integrator}
&+\frac{\sigma^2 \varepsilon}{8}(1-e^{-2h/\varepsilon})\nabla\ln(\det(G))(X_n^\varepsilon)
+g(X_n^\varepsilon) \lambda^\varepsilon_{n+1},\\
\zeta(X_{n+1}^\varepsilon)&=e^{-h/\varepsilon}\zeta(X_n^\varepsilon)
+\sigma\sqrt{\frac{\varepsilon}{2}(1-e^{-2h/\varepsilon})}g^T(X_n^\varepsilon)\xi_n \nonumber\\
&+\varepsilon(1-e^{-h/\varepsilon})\big(g^T f+\frac{\sigma^2}{4} g^T\nabla\ln(\det(G)) +\frac{\sigma^2}{2}\Div(g)\big)(X_n^\varepsilon) \nonumber\\
&+\sigma^2\Big( \varepsilon(1-e^{-h/\varepsilon})-\sqrt{\frac{\varepsilon h}{2}(1-e^{-2h/\varepsilon})} \Big) \nonumber\\
&\times \Big(\sum_{i=1}^d (g'(gG^{-1}g^Te_i))^T gG^{-1}g^Te_i - \sum_{i=1}^d (g'(e_i))^T gG^{-1}g^Te_i \Big)(X_n^\varepsilon). \nonumber
\end{align}
\ENDFOR
\end{algorithmic}
\end{algorithm}

The new method works in a way similar to the constrained Euler integrator~\eqref{equation:Euler_Explicit_constrained}. Knowing the approximation~$X_n^\varepsilon$ of~$X^\varepsilon(nh)$, we project a modified Euler step on a modified manifold defined by the constraint given in~\eqref{equation:UA_integrator} (in place of~$\zeta(X_{n+1}^\varepsilon)=0$ for the constrained Euler integrator~\eqref{equation:Euler_Explicit_constrained}). We project the modified step in the direction~$g(X_n^\varepsilon)$ with the help of a Lagrange multiplier~$\lambda^\varepsilon_{n+1}$. For the implementation of the method, one can use, for instance, a fixed point iteration or a Newton method at each step to find the solution~$(X_{n+1}^\varepsilon,\lambda^\varepsilon_{n+1})$ of the implicit system of equations~\eqref{equation:UA_integrator}.

In order for the discretization~\eqref{equation:UA_integrator} to be well-defined, we use bounded random variables.
The~$\xi_n$ are independent and bounded discrete random vectors that have the same moments as standard Gaussian random vectors up to order four, in the spirit of~\cite[Chap.\ts 2]{Milstein04snf}, that is, for instance, that their components satisfy
\begin{equation}
\label{equation:discrete_rv}
\P(\xi_i=0)=\frac{2}{3} \quad \text{and} \quad \P(\xi_i=\pm\sqrt{3})=\frac{1}{6}, \quad i=1,\dots,d.
\end{equation}
Note that using truncated Gaussian random variables would also work.

\begin{remark}
The new integrator is related to the popular idea of backward error analysis and modified equations for SDEs (see, for instance,~\cite{Zygalakis11ote,Abdulle12hwo,Debussche12wbe,Kopec15wbea,Kopec15wbeb}).
The idea is to define a projection method (see~\cite[Sect.\ts IV.4]{Hairer06gni}) with a modified constraint in place of~$\zeta(X_n)=0$.
Instead of evaluating the stiff term~$\frac{h}{\varepsilon} g G^{-1} \zeta$ as in the Euler scheme~\eqref{equation:Euler_EEE}, we project a modified step of the explicit Euler scheme in~$\R^d$ on a manifold that is close to~$\MM$ when~$\varepsilon\ll 1$ and whose constraint is given by a truncation of a uniform expansion of~$\zeta(X^\varepsilon)$.
When~$\varepsilon\rightarrow \infty$, the expression of the constraint~$\zeta(X_{n+1}^\varepsilon)$ in~\eqref{equation:UA_integrator} tends to a truncated Taylor expansion in~$h$ around~$X_n^\varepsilon$, while for~$\varepsilon\rightarrow 0$,~$\zeta(X_{n+1}^\varepsilon)$ tends to zero, which enforces that the integrator lies on~$\MM$. These intuitions will be made rigorous in Section~\ref{section:proofs}.
\end{remark}

\begin{remark}
In the context of a manifold~$\MM$ of codimension~$q=1$, the Gram matrix~$G(x)$ and~$g(x)^Te_i=g_i(x)$ are real numbers, so that~$\nabla\ln(\det(G))=2G^{-1} g'(g)$ and
$$
\sum_{i=1}^d (g'(gG^{-1}g^Te_i))^T gG^{-1}g^Te_i
=G^{-1}(g'(g))^T g
=\sum_{i=1}^d (g'(e_i))^T gG^{-1}g^Te_i.
$$
The discretization~\eqref{equation:UA_integrator} thus reduces to
\begin{align}
X_{n+1}^\varepsilon&=X_n^\varepsilon
+\sqrt{h}\sigma \xi_n
+hf(X_n^\varepsilon)
+\frac{(1-e^{-h/\varepsilon})^2}{2}(\zeta^2 G^{-2} g'(g))(X_n^\varepsilon) \nonumber\\
\label{equation:UA_integrator_q_1}
&+\frac{\sigma^2 \varepsilon}{4}(1-e^{-2h/\varepsilon})(G^{-1} g'(g))(X_n^\varepsilon)
+g(X_n^\varepsilon) \lambda^\varepsilon_{n+1},\\
\zeta(X_{n+1}^\varepsilon)&=e^{-h/\varepsilon}\zeta(X_n^\varepsilon)
+\sigma\sqrt{\frac{\varepsilon}{2}(1-e^{-2h/\varepsilon})}g^T(X_n^\varepsilon)\xi_n \nonumber\\
&+\varepsilon(1-e^{-h/\varepsilon})(g^T f +\frac{\sigma^2}{2} G^{-1} g^T g'(g) +\frac{\sigma^2}{2}\Div(g))(X_n^\varepsilon). \nonumber
\end{align}
\end{remark}

We present in the rest of the section the uniform accuracy property of the discretization~\eqref{equation:UA_integrator}, and we show that the integrator converges to the constrained Euler scheme~\eqref{equation:Euler_Explicit_constrained} when~$\varepsilon\rightarrow 0$.
The different convergence results are summarized by the following commutative diagram, where~$T=Nh$ is fixed. Note that, as we present a convergence result in~$h$ that is uniform in~$\varepsilon$, the two arrows for the convergence in~$h$ rely on the same Theorem~\ref{theorem:uniform_consistency_UA_scheme}.
$$
\begin{tikzcd}[row sep=large,column sep = large]
    \text{integrator } X^\varepsilon_N \text{ in } \R^d
    \arrow[d,"h\rightarrow 0"',"(Thm.\ts~\ref{theorem:uniform_consistency_UA_scheme})"]
    \arrow[r,"\varepsilon\rightarrow 0","(Thm.\ts~\ref{theorem:convergence_integrator_epsilon})"']
    & \text{integrator } X^0_N \text{ on } \MM
    \arrow[d,"h\rightarrow 0"',"(Thm.\ts~\ref{theorem:uniform_consistency_UA_scheme})"]
    \\
    \text{solution }X^\varepsilon(T) \text{ of~\eqref{equation:Langevin_epsilon}}
    \arrow[r,"\varepsilon\rightarrow 0","(Thm.\ts~\ref{theorem:CV_penalized_constrained})"']
    & \text{solution }X^0(T) \text{ of~\eqref{equation:projected_Langevin}}
\end{tikzcd}
$$

We now state the main result of this work, that is, the uniform accuracy of the discretization~\eqref{equation:UA_integrator} for approximating the solution of the penalized Langevin dynamics~\eqref{equation:Langevin_epsilon}.
\begin{theorem}
\label{theorem:uniform_consistency_UA_scheme}
Under Assumption~\ref{assumption:regularity_ass}, the integrator~$(X_n^\varepsilon)$ given by~\eqref{equation:UA_integrator} is a consistent uniformly accurate approximation of the solution~$X^\varepsilon(t)$ of the penalized Langevin dynamics~\eqref{equation:Langevin_epsilon}; that is, for a given test function~$\phi\in \CC^5_P$, there exist~$h_0>0$,~$C>0$ such that for all~$\varepsilon>0$,~$h\leq h_0$, the following estimate holds:
\begin{equation}
\label{equation:uniform_global_consistency}
\abs{\E[\phi(X_n^\varepsilon)]-\E[\phi(X^\varepsilon(nh))]}\leq C\sqrt{h},\quad n=0,1,\dots,N,\quad Nh=T.
\end{equation}
\end{theorem}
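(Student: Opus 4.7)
The plan is to carry out a Talay--Tubaro style weak error analysis, with the central novelty that every asymptotic expansion involved must remain uniformly valid in $\varepsilon$. I would proceed in three steps: a uniform weak expansion of the exact semigroup, a matching expansion of one step of the scheme, and a telescoping sum converting local into global error.

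For the exact flow, I would use the change of coordinates $\psi=(\varphi,\zeta)^T$ provided by Assumption~\ref{assumption:regularity_ass} to decouple, to leading order on $[t_n,t_n+h]$, the slow tangential variable $\varphi(X^\varepsilon)$ from the fast transverse variable $z^\varepsilon=\zeta(X^\varepsilon)$. Freezing the slow coefficients at $X_n^\varepsilon$, the variable $z^\varepsilon$ obeys a linear stiff SDE of Ornstein--Uhlenbeck type
$$dz^\varepsilon=-\tfrac{1}{\varepsilon}G(X_n^\varepsilon)z^\varepsilon\,dt+\sigma g^T(X_n^\varepsilon)\,dW+(\text{slow drift})\,dt,$$
whose explicit resolution produces precisely the coefficients $e^{-h/\varepsilon}$, $\varepsilon(1-e^{-h/\varepsilon})$ and $\varepsilon(1-e^{-2h/\varepsilon})$ that appear in~\eqref{equation:UA_integrator}, together with the Fixman correction. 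Combining this explicit fast dynamics with a standard weak Taylor-type development in the slow direction yields an expansion of $\E[\phi(X^\varepsilon(t_n+h))\mid X^\varepsilon(t_n)=X_n^\varepsilon]$ whose residual is bounded by $C h^{3/2}$ uniformly in $\varepsilon$.

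For the scheme, I would first expand the Lagrange multiplier $\lambda^\varepsilon_{n+1}$ uniformly in $\varepsilon$. This rests on Assumption~\ref{assumption:regularity_ass}, and in particular on the estimate~\eqref{equation:assumption_G_modified} for the averaged Gram matrix $G_y$, which ensures solvability of the implicit system~\eqref{equation:UA_integrator} and delivers the needed expansion of $\lambda^\varepsilon_{n+1}$ in terms of $X_n^\varepsilon$ and $\xi_n$ (the content of Lemma~\ref{lemma:uniform_expansion_lambda} already announced in the text). Substituting this back into~\eqref{equation:UA_integrator} and using that the $\xi_n$ of~\eqref{equation:discrete_rv} share moments up to order four with a standard Gaussian produces a parallel expansion of $\E[\phi(X_{n+1}^\varepsilon)\mid X_n^\varepsilon]$. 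By the very design of the scheme, the two expansions agree up to $\OO(h^{3/2})$ uniformly in $\varepsilon$, giving a uniform local weak error of that order. A standard telescoping
$$\E[\phi(X_N^\varepsilon)]-\E[\phi(X^\varepsilon(T))]=\sum_{n=0}^{N-1}\E\bigl[u^\varepsilon(T-t_{n+1},X_{n+1}^\varepsilon)-u^\varepsilon(T-t_{n+1},\widetilde X^\varepsilon(t_{n+1}))\bigr],$$
with $u^\varepsilon(t,x):=\E[\phi(X^\varepsilon(t))\mid X^\varepsilon(0)=x]$ and $\widetilde X^\varepsilon$ the exact flow restarted at $X_n^\varepsilon$, then sums these local errors to the stated global bound. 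The loss from $h^{3/2}$ per step to $\sqrt{h}$ globally, rather than $h$, reflects that the derivatives of $u^\varepsilon$ transverse to $\MM$ can only be controlled by negative powers of $\sqrt{\varepsilon}$, forcing the expansion to be truncated at a lower order than in the non-stiff setting.

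The main obstacle is the first step: producing a weak asymptotic expansion of $u^\varepsilon$ whose coefficients and remainder are bounded uniformly in $\varepsilon$. Classical weak backward error analysis fails because of the stiff drift $-\frac{1}{\varepsilon}g G^{-1}\zeta$, and the resolution relies on exactly the splitting sketched above: handle the fast direction analytically as an OU process in the $z=\zeta$ coordinate, and treat the slow direction by standard smooth-coefficient arguments. This is where the global change of coordinates $\psi$, the Lipschitz regularity of $\psi$, $\psi'$ and $\psi''$, and the uniform invertibility~\eqref{equation:assumption_G_modified} of $G_y$ in Assumption~\ref{assumption:regularity_ass} are fully exploited.
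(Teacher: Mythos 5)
Your high-level plan is exactly that of the paper: use the change of coordinates $\psi=(\varphi,\zeta)^T$ to decouple a slow variable $\varphi(X^\varepsilon)$ and a fast variable $\zeta(X^\varepsilon)$, solve the fast variable by variation of constants to reveal the exponential coefficients of~\eqref{equation:UA_integrator}, match with a uniform expansion of one step of the scheme obtained by expanding the Lagrange multiplier (this is Lemma~\ref{lemma:uniform_expansion_lambda} feeding Proposition~\ref{proposition:uniform_expansion_integrator}), and close with a telescoping sum using bounded moments and uniform regularity of the semigroup. Two points in your proposal are, however, incorrect.

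First, the fast SDE is not the one you write. You assert $dz^\varepsilon=-\tfrac{1}{\varepsilon}G(X_n^\varepsilon)z^\varepsilon\,dt+\dots$, but applying It\^o to $\zeta(X^\varepsilon)$ with the drift $-\tfrac{1}{\varepsilon}gG^{-1}\zeta$ of~\eqref{equation:Langevin_epsilon} gives $g^T\cdot(-\tfrac{1}{\varepsilon}gG^{-1}\zeta)=-\tfrac{1}{\varepsilon}\zeta$, so the relaxation is the \emph{scalar} operator $-\tfrac{1}{\varepsilon}I_q$, not $-\tfrac{1}{\varepsilon}G$. This is precisely why~\eqref{equation:Langevin_epsilon} was preferred over~\eqref{equation:modified_Langevin_epsilon}: the preconditioned penalty produces the scalar factors $e^{-h/\varepsilon}$, $\varepsilon(1-e^{-h/\varepsilon})$, etc.\ appearing in~\eqref{equation:UA_integrator}. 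Your version would produce matrix exponentials $e^{-Gh/\varepsilon}$, inconsistent with the scheme you are trying to justify.

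Second, your diagnosis of the $\sqrt{h}$ global rate is wrong and contradicts a lemma that the telescoping argument actually relies on. You claim the rate degrades because ``the derivatives of $u^\varepsilon$ transverse to $\MM$ can only be controlled by negative powers of $\sqrt{\varepsilon}$.'' But Lemma~\ref{lemma:growth_properties_exact_sol} establishes the opposite: $u^\varepsilon(t,\cdot)=\E[\phi(X^\varepsilon(t,\cdot))]$ lies in $\CC^3_P$ with constants \emph{independent of $\varepsilon$ and $t$}. If that were false, the telescoping sum
$E^\varepsilon_h\leq\sum_{n=1}^N\abs{\E[\phi_n(X^\varepsilon(h,x))-\phi_n(X^\varepsilon_1(x))]}$
could not be summed with constants independent of $\varepsilon$ and the whole argument would collapse. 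The actual reason for the $\sqrt{h}$ rate is simply that the uniform local weak error in Propositions~\ref{proposition:weak_uniform_expansion_penalized_dynamic} and~\ref{proposition:uniform_expansion_integrator} is only $\OO(h^{3/2})$ (rather than the $\OO(h^2)$ one obtains for fixed $\varepsilon$), because in the intermediate regime $h\sim\varepsilon$ the exponential coefficients cannot be Taylor-expanded to higher order with remainders uniform in $\varepsilon$. Summing $N=T/h$ such local errors gives $T\sqrt{h}$, which matches Remark~\ref{remark:discussion_order_UA_method}. You should also note explicitly that the telescoping step requires the uniform bounded-moments property of the scheme (Proposition~\ref{proposition:bounded_moments}) to control the polynomial-growth constants along the numerical trajectory.
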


\begin{remark}
\label{remark:discussion_order_UA_method}
Note that Theorem~\ref{theorem:uniform_consistency_UA_scheme} states the uniform consistency, but not the uniform weak order one, as one could expect.
The discretization~\eqref{equation:UA_integrator} has weak order one if~$\varepsilon=\varepsilon_0$ is fixed or in the limit~$\varepsilon\rightarrow 0$, but an order reduction occurs in the intermediate regime~$0<\varepsilon<\varepsilon_0$, and the integrator only has weak order~$1/2$ with respect to~$h$ in general.
For the sake of simplicity, we leave the creation of uniformly accurate integrators of higher weak order for future works.
\end{remark}

We present the proof of Theorem~\ref{theorem:uniform_consistency_UA_scheme} in Section~\ref{section:proofs}.
It relies on a weak expansion in~$h$ of the solution of~\eqref{equation:Langevin_epsilon} that is uniform in~$\varepsilon$.
One could directly use this uniform expansion as an explicit numerical integrator for solving~\eqref{equation:Langevin_epsilon}. It would also yield a uniformly accurate scheme and would not require one to solve a fixed point problem.
However, in the limit~$\varepsilon\rightarrow 0$, this integrator would almost surely not stay on the manifold.
The crucial geometric property that the integrator lies on the manifold when~$\varepsilon\rightarrow 0$ is satisfied for the new method, as stated in the following result.
\begin{theorem}
\label{theorem:convergence_integrator_epsilon}
Under Assumption~\ref{assumption:regularity_ass}, the integrator~$(X_n^\varepsilon)$ in~\eqref{equation:UA_integrator} converges to the Euler scheme on the manifold~\eqref{equation:Euler_Explicit_constrained} when~$\varepsilon\rightarrow 0$; that is, for~$h$ and~$N$ fixed such that~$T=Nh$, there exists a constant~$C_h>0$ that depends on~$h$ but not on~$\varepsilon$ such that
$$
\abs{\zeta(X^\varepsilon_n)}\leq C_h\sqrt{\varepsilon},\quad n=0,1,\dots,N.
$$
In addition, if Assumption~\ref{assumption:regularity_ass_stronger} is satisfied, then, for~$h_0$ small enough, for~$h\leq h_0$ fixed, there exists a constant~$C_h>0$ such that
\begin{equation}
\label{equation:weak_CV_epsilon_0_strong}
\abs{X_n^\varepsilon-X_n^0}\leq C_h \sqrt{\varepsilon},\quad n=0,1,\dots,N,\quad Nh=T.
\end{equation}
\end{theorem}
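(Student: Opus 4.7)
My plan is to prove the two estimates by separate inductions on $n$, treating the bound on $\zeta(X_n^\varepsilon)$ first and feeding it into the comparison with the constrained Euler scheme. The key feature of the scheme that makes the first part almost immediate is that the constraint equation in~\eqref{equation:UA_integrator} is \emph{explicit}: it expresses $\zeta(X_{n+1}^\varepsilon)$ as a closed-form function of $X_n^\varepsilon$ and $\xi_n$ alone, without any occurrence of $\lambda_{n+1}^\varepsilon$ or $X_{n+1}^\varepsilon$ on the right-hand side. Under Assumption~\ref{assumption:regularity_ass} and the boundedness of $\xi_n$ from~\eqref{equation:discrete_rv}, every coefficient on that right-hand side apart from $e^{-h/\varepsilon}\le 1$ is readily seen to be $\OO(\sqrt{\varepsilon})$ uniformly in $\varepsilon$: $\sqrt{(\varepsilon/2)(1-e^{-2h/\varepsilon})}\le\sqrt{\varepsilon/2}$, $\varepsilon(1-e^{-h/\varepsilon})\le\varepsilon$, and $\abs{\varepsilon(1-e^{-h/\varepsilon})-\sqrt{(\varepsilon h/2)(1-e^{-2h/\varepsilon})}}\le C\sqrt{\varepsilon h}$. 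This yields a pointwise inductive inequality $\abs{\zeta(X_{n+1}^\varepsilon)}\le e^{-h/\varepsilon}\abs{\zeta(X_n^\varepsilon)}+K\sqrt{\varepsilon}$ with $K$ independent of $\varepsilon$; starting from $\zeta(X_0^\varepsilon)=0$ and iterating $N=T/h$ times gives the first bound $\abs{\zeta(X_n^\varepsilon)}\le C_h\sqrt{\varepsilon}$.

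For the strong comparison~\eqref{equation:weak_CV_epsilon_0_strong}, I write a step of the new method as $X_{n+1}^\varepsilon=z_n^\varepsilon+g(X_n^\varepsilon)\lambda_{n+1}^\varepsilon$, where $z_n^\varepsilon=X_n^\varepsilon+\sqrt{h}\sigma\xi_n+hf(X_n^\varepsilon)+E_n^\varepsilon$ and the corrector $E_n^\varepsilon$ consists of $\tfrac{1}{2}(1-e^{-h/\varepsilon})^2 g'(gG^{-1}\zeta)G^{-1}\zeta(X_n^\varepsilon)$, of size $\OO(\abs{\zeta(X_n^\varepsilon)}^2)=\OO(\varepsilon)$ by the first part, together with $\tfrac{\sigma^2\varepsilon}{8}(1-e^{-2h/\varepsilon})\nabla\ln(\det G)=\OO(\varepsilon)$. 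Using the Lipschitz bounds on $f$ and $g$ from Assumption~\ref{assumption:regularity_ass}, this gives $\abs{z_n^\varepsilon-z_n^0}\le(1+Ch)\abs{X_n^\varepsilon-X_n^0}+C_h\varepsilon$. The two Lagrange multipliers are characterized by the identities
$$
G_{g(X_n^\varepsilon)\lambda_{n+1}^\varepsilon}(z_n^\varepsilon)\,\lambda_{n+1}^\varepsilon=R_n^\varepsilon-\zeta(z_n^\varepsilon),\qquad G_{g(X_n^0)\lambda_{n+1}^0}(z_n^0)\,\lambda_{n+1}^0=-\zeta(z_n^0),
$$
obtained from the Taylor identity $\zeta(x+g(x)\lambda)=\zeta(x)+G_{g(x)\lambda}(x)\lambda$, where $R_n^\varepsilon$ denotes the right-hand side of the constraint equation in~\eqref{equation:UA_integrator}, which satisfies $\abs{R_n^\varepsilon}\le C_h\sqrt{\varepsilon}$ by the first part. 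Under Assumption~\ref{assumption:regularity_ass_stronger}, $G_y(x)$ is invertible uniformly in $x,y$, so subtracting the two identities and inverting yields $\abs{\lambda_{n+1}^\varepsilon-\lambda_{n+1}^0}\le C(\abs{X_n^\varepsilon-X_n^0}+\sqrt{\varepsilon})$. Combining this with the bound on $z_n^\varepsilon-z_n^0$ closes the induction by a discrete Gronwall argument with constant $C_h\sim e^{CT}$.

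The delicate point is the uniform control of $\lambda_{n+1}^\varepsilon$ and the existence of the implicit iterate $X_{n+1}^\varepsilon$ itself. The quantitative invertibility provided by Assumption~\ref{assumption:regularity_ass_stronger} is precisely what is needed both to guarantee that $\lambda_{n+1}^\varepsilon$ is well-defined for $h\le h_0$ small enough and to obtain the Lipschitz comparison above. I would rely on the uniform expansion of $\lambda_{n+1}^\varepsilon$ from Lemma~\ref{lemma:uniform_expansion_lambda} to ensure in particular that $\lambda_{n+1}^\varepsilon=\OO(\sqrt{h})$ uniformly in $\varepsilon$, so that the iterates remain in a region where the global regularity bounds of Assumption~\ref{assumption:regularity_ass} apply pathwise (not merely in mean square), thanks to the boundedness of the random vectors $\xi_n$ from~\eqref{equation:discrete_rv}.
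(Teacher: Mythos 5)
Your proof follows essentially the same strategy as the paper: the first estimate comes directly from the explicit constraint equation and a geometric-series iteration, and the second from comparing the Lagrange multipliers via a fundamental-theorem-of-calculus representation, using the uniform invertibility in Assumption~\ref{assumption:regularity_ass_stronger} together with the Lipschitz continuity of $G_y^{-1}(x)$ and a discrete Gronwall argument; the paper introduces an intermediate multiplier $\widetilde{\lambda^\varepsilon_{n+1}}$ (the $\varepsilon\to0$ limit of the formula~\eqref{equation:expression_exact_lambda} evaluated along $X_n^\varepsilon$) whereas you subtract the two characterizing identities directly, but this is a cosmetic difference.

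One small slip deserves a fix: you invoke the identity $\zeta(x+g(x)\lambda)=\zeta(x)+G_{g(x)\lambda}(x)\lambda$, which requires the base point of the path and the direction vector to be evaluated at the \emph{same} $x$ (because $G_y(x)=\int_0^1 g^T(x+\tau y)d\tau\,g(x)$ carries $g(x)$ on the right). You then apply it with base point $z_n^\varepsilon$ but direction $g(X_n^\varepsilon)$, so the displayed equation $G_{g(X_n^\varepsilon)\lambda_{n+1}^\varepsilon}(z_n^\varepsilon)\lambda_{n+1}^\varepsilon=R_n^\varepsilon-\zeta(z_n^\varepsilon)$ is not literally an instance of that identity, and the resulting matrix $\int_0^1 g^T(z_n^\varepsilon+\tau g(X_n^\varepsilon)\lambda_{n+1}^\varepsilon)d\tau\,g(X_n^\varepsilon)$ is not of the form $G_y(x)$, hence not directly covered by Assumption~\ref{assumption:regularity_ass_stronger}. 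The clean fix (the paper's choice) is to anchor the fundamental theorem of calculus at $X_n^\varepsilon$, obtaining $\zeta(X_{n+1}^\varepsilon)-\zeta(X_n^\varepsilon)=G_{X_{n+1}^\varepsilon-X_n^\varepsilon}(X_n^\varepsilon)\lambda_{n+1}^\varepsilon+\int_0^1 g^T(X_n^\varepsilon+\tau(X_{n+1}^\varepsilon-X_n^\varepsilon))d\tau\,(z_n^\varepsilon-X_n^\varepsilon)$, which places you squarely in the setting of Assumption~\ref{assumption:regularity_ass_stronger}; alternatively, one can argue that your mixed matrix is an $\OO(\sqrt{h})$ perturbation of $G_{g(X_n^\varepsilon)\lambda_{n+1}^\varepsilon}(X_n^\varepsilon)$ and hence still uniformly invertible for $h$ small. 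Either way the rest of your argument goes through unchanged.
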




\begin{remark}
In the deterministic context (i.e.\ts when~$\sigma=0$), the uniform accuracy of the discretization~\eqref{equation:UA_integrator} still holds, and the speed of convergence to the manifold~$\MM$ of both the exact solution and the integrator are in~$\OO(\varepsilon)$.
To the best of our knowledge, the integrator given by~\eqref{equation:UA_integrator} is the first integrator with the uniform accuracy property for solving the singular perturbation problem~\eqref{equation:Langevin_epsilon} with~$\sigma=0$.
However, similar expansions that are uniform with respect to~$\varepsilon$ are presented in~\cite[Chaps.\ts VI-VII]{Hairer10sod} and references therein.
\end{remark}

\begin{remark}
Another widely used scheme on the manifold is the Euler scheme with implicit projection direction,
\begin{equation}
\label{equation:Euler_Implicit_constrained}
X^0_{n+1}=X^0_n+hf(X^0_n) +\sqrt{h}\sigma \xi_n + g(X^0_{n+1})\lambda^0_{n+1}, \quad \zeta(X^0_{n+1})=0,
\end{equation}
where the Lagrange multiplier~$\lambda^0_{n+1}$ is determined by the constraint~$\zeta(X^0_{n+1})=0$.
The uniformly accurate discretization given in~\eqref{equation:UA_integrator} can be modified so that it converges to the integrator~\eqref{equation:Euler_Implicit_constrained} when~$\varepsilon\rightarrow 0$.
It suffices to replace the first line of~\eqref{equation:UA_integrator}
\begin{align*}
X_{n+1}^\varepsilon&=X_n^\varepsilon
+\sqrt{h}\sigma \xi_n
+hf(X_n^\varepsilon)
-\frac{(1-e^{-h/\varepsilon})^2}{2}(g'(g G^{-1} \zeta) G^{-1} \zeta)(X_n^\varepsilon) \nonumber\\
&+\frac{\sigma^2}{4}\Big(\sqrt{2\varepsilon h(1-e^{-2h/\varepsilon})}-\frac{\varepsilon}{2}(1-e^{-2h/\varepsilon})\Big)\nabla\ln(\det(G))(X_n^\varepsilon)
+g(X_{n+1}^\varepsilon) \lambda^\varepsilon_{n+1},
\end{align*}
and to keep the same expansion for the constraint~$\zeta(X_{n+1}^\varepsilon)$. The methodology for the uniform expansion of the integrator that we present in Section~\ref{section:expansion_num_sol} extends to this context, so that the convergence results persist.
Similarly, one could change the direction of projection~$g(X_n^\varepsilon)$ into~$g(Y_n^\varepsilon)$, where~$Y_n^\varepsilon$ is any consistent one-step approximation of~$X_n^\varepsilon$, in the spirit of the class of projected Runge-Kutta methods presented in~\cite{Laurent21ocf}.
Finding a class of uniformly accurate discretizations that converge to a more general class of Runge-Kutta methods on the manifold~$\MM$ is matter for future works.
\end{remark}

The uniform discretization~\eqref{equation:UA_integrator} is implicit and requires one to solve a fixed point problem at each step with, for instance, a fixed point iteration or a Newton method.
The following result, in the spirit of~\cite[Chap.\ts VII]{Hairer10sod} for deterministic DAEs and~\cite[Lemma 3.3]{Laurent21ocf} for the constrained dynamics~\eqref{equation:projected_Langevin}, confirms that the associated implicit system is not stiff, that is, that its complexity does not depend on the stiff parameter~$\varepsilon$.
\begin{theorem}
\label{theorem:uniform_fixed_point_problem}
Under Assumption~\ref{assumption:regularity_ass}, each step of the integrator~$(X_n^\varepsilon)$ given by~\eqref{equation:UA_integrator} can be rewritten as a solution of a fixed point problem of the form
$$
X_{n+1}^\varepsilon=F_h^\varepsilon(X_{n+1}^\varepsilon),
$$
where~$F_h^\varepsilon\colon\R^d\rightarrow\R^d$ depends on~$X_n^\varepsilon$,~$\xi_n$,~$h$, and~$\varepsilon$.
Moreover, if Assumption~\ref{assumption:regularity_ass_stronger} is satisfied, then there exists~$h_0>0$ independent of~$\varepsilon$ such that for all~$h\leq h_0$,~$F_h^\varepsilon$ is a uniform contraction, that is, there exists a positive constant~$L< 1$ independent of~$h$ and~$\varepsilon$ such that, for all~$y_1$,~$y_2\in\R^d$,
$$\abs{F_h^\varepsilon(y_2)-F_h^\varepsilon(y_1)}\leq L\abs{y_2-y_1}.$$
\end{theorem}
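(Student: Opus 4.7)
The plan is to eliminate the Lagrange multiplier~$\lambda^\varepsilon_{n+1}$ from the implicit system~\eqref{equation:UA_integrator} so as to express~$X_{n+1}^\varepsilon$ as a fixed point in~$\R^d$, and then to bound the Lipschitz constant of the resulting map uniformly in~$\varepsilon$. Rewrite~\eqref{equation:UA_integrator} compactly as~$X_{n+1}^\varepsilon = A + g(X_n^\varepsilon)\lambda^\varepsilon_{n+1}$ and~$\zeta(X_{n+1}^\varepsilon) = b$, where the vectors~$A\in\R^d$ and~$b\in\R^q$ gather the explicit data depending only on~$X_n^\varepsilon$,~$\xi_n$,~$h$,~$\varepsilon$. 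Apply the fundamental theorem of calculus to~$\zeta$, namely
$$
\zeta(Y) - \zeta(A) = \int_0^1 g^T(A + \tau(Y-A))\,d\tau\cdot(Y-A).
$$
Setting~$Y = X_{n+1}^\varepsilon$ and~$Y - A = g(X_n^\varepsilon)\lambda^\varepsilon_{n+1}$, this becomes~$b - \zeta(A) = M(Y)\lambda^\varepsilon_{n+1}$ with
$$
M(Y) := \int_0^1 g^T(A+\tau(Y-A))\,d\tau\cdot g(X_n^\varepsilon) \in \R^{q\times q}.
$$
Solving for~$\lambda^\varepsilon_{n+1}$ and back-substituting yields the fixed-point map
$$
F_h^\varepsilon(Y) := A + g(X_n^\varepsilon)\,M(Y)^{-1}(b - \zeta(A)),
$$
which settles the first statement of the theorem.

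For the contraction, differentiate~$F_h^\varepsilon$ using~$DM^{-1} = -M^{-1}(DM)M^{-1}$; Assumption~\ref{assumption:regularity_ass} bounds~$\abs{DM(Y)}\leq\tfrac{1}{2}\abs{g'}_\infty\abs{g}_\infty$, so
$$
\abs{DF_h^\varepsilon(Y)} \leq \tfrac{1}{2}\abs{g}_\infty^2\abs{g'}_\infty \abs{M(Y)^{-1}}^2 \abs{b - \zeta(A)}.
$$
To control~$\abs{M(Y)^{-1}}$ uniformly, decompose~$M(Y) = G_{Y-X_n^\varepsilon}(X_n^\varepsilon) + R(Y)$ where~$G_y$ is as in~\eqref{equation:assumption_G_modified} and~$\abs{R(Y)}\leq C\abs{A-X_n^\varepsilon}$ by Lipschitz continuity of~$g$. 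Checking that~$\abs{A - X_n^\varepsilon}\leq C\sqrt h$ uniformly in~$\varepsilon$ relies on bounding the individual terms of~$A - X_n^\varepsilon$: the~$\sqrt h\sigma\xi_n$ and~$hf$ contributions are immediate, while~$(1-e^{-h/\varepsilon})^2\abs{G^{-1}\zeta(X_n^\varepsilon)}^2$ and~$\varepsilon(1-e^{-2h/\varepsilon})\abs{\nabla\ln\det G}$ are both~$\OO(h)$ using the elementary inequality~$\varepsilon(1-e^{-h/\varepsilon})\leq\min(h,\varepsilon)$ together with~$\abs{\zeta(X_n^\varepsilon)}\leq C_h\sqrt\varepsilon$ from Theorem~\ref{theorem:convergence_integrator_epsilon}. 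Assumption~\ref{assumption:regularity_ass_stronger} then yields~$\abs{G_y(x)^{-1}}\leq 1/c$, and a standard perturbation argument gives~$\abs{M(Y)^{-1}}\leq 2/c$ for~$h$ small enough, uniformly in~$\varepsilon$ and~$Y$.

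The main obstacle is the uniform-in-$\varepsilon$ estimate~$\abs{b-\zeta(A)}\leq C\sqrt h$, which requires carefully matching the exponential coefficients in~$b$ from~\eqref{equation:UA_integrator} with the Taylor expansion~$\zeta(A) = \zeta(X_n^\varepsilon) + g^T(X_n^\varepsilon)(A-X_n^\varepsilon) + \OO(\abs{A-X_n^\varepsilon}^2)$. The plan is to organize the difference into three groups: (i)~the contribution~$(e^{-h/\varepsilon}-1)\zeta(X_n^\varepsilon)$, controlled by~$\min(1,h/\varepsilon)\sqrt\varepsilon\leq\sqrt h$ combined with Theorem~\ref{theorem:convergence_integrator_epsilon}; (ii)~the stochastic mismatch~$\bigl(\sqrt{\varepsilon(1-e^{-2h/\varepsilon})/2}-\sqrt h\bigr)\sigma g^T(X_n^\varepsilon)\xi_n$, bounded via~$\bigl|\sqrt h - \sqrt{\varepsilon(1-e^{-2h/\varepsilon})/2}\bigr|\leq (h-\varepsilon(1-e^{-2h/\varepsilon})/2)/\sqrt h \leq \sqrt h$; and (iii)~deterministic drift contributions carrying an explicit factor~$h$ or~$\varepsilon(1-e^{-h/\varepsilon})\leq h$, hence~$\OO(h)$. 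The delicate point in the book-keeping is that these coefficients behave differently in the regimes~$h\ll\varepsilon$ and~$\varepsilon\ll h$ but remain comparable through the sharp inequality~$\varepsilon(1-e^{-h/\varepsilon})\leq\min(h,\varepsilon)$. Combining all bounds yields~$\abs{DF_h^\varepsilon}\leq C\sqrt h$ uniformly in~$\varepsilon$, so that choosing~$h_0$ with~$C\sqrt{h_0}<1$ and invoking the mean value inequality produces the required uniform contraction constant~$L<1$ independent of~$h\leq h_0$ and~$\varepsilon$.
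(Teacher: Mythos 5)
Your overall strategy is the right one — eliminate the Lagrange multiplier via the fundamental theorem of calculus, obtain a fixed-point map, and bound its Lipschitz constant by~$\OO(\sqrt h)$ uniformly in~$\varepsilon$ — and it does deliver the theorem. However, you take a longer route than the paper, and one of the estimates you quote has the wrong dependence on~$h$.

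The paper applies the fundamental theorem of calculus from~$x=X_n^\varepsilon$ to~$Y$, not from~$A$ to~$Y$. This buys two things. First, the resulting~$q\times q$ matrix is precisely~$G_{Y-x}(x)$ as it appears in Assumption~\ref{assumption:regularity_ass_stronger}, so its inverse is bounded directly without your extra perturbation step comparing~$M(Y)$ to~$G_{Y-X_n^\varepsilon}(X_n^\varepsilon)$. Second, and more importantly,~$\zeta(Y)-\zeta(x)$ is already given explicitly by the modified constraint, namely~$\sqrt h\,\zeta_{(1/2)}^\varepsilon+h\,\zeta_{(1)}^\varepsilon$ with~$\zeta_{(1/2)}^\varepsilon$ and~$\zeta_{(1)}^\varepsilon$ uniformly bounded on~$\MM^\varepsilon_h$ (this is the content of Lemma~\ref{lemma:uniform_expansion_lambda}, via Lemma~\ref{lemma:estimate_uniform_zeta}). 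The paper therefore never needs to Taylor-expand~$\zeta$ at~$A$ or to match exponential coefficients; the bracketed factor in the fixed-point map is~$\OO(\sqrt h)$ by inspection. Your route, integrating from~$A$ to~$Y$, forces you to prove~$\abs{b-\zeta(A)}\leq C\sqrt h$ from scratch, which is exactly the ``main obstacle'' you identify.

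The concrete flaw is in your group~(i). You invoke Theorem~\ref{theorem:convergence_integrator_epsilon} to assert~$\abs{\zeta(X_n^\varepsilon)}\leq C_h\sqrt\varepsilon$, but the constant~$C_h$ there is permitted to blow up as~$h\to 0$ (indeed, tracing the proof of Lemma~\ref{lemma:estimate_uniform_zeta} shows it is of order~$1/\sqrt h$ or worse). Multiplying by~$\min(1,h/\varepsilon)\sqrt\varepsilon\leq\sqrt h$ then yields only~$\abs{(e^{-h/\varepsilon}-1)\zeta(X_n^\varepsilon)}\leq C_h\sqrt h$, which does \emph{not} go to zero with~$h$ and hence cannot give a contraction constant~$L<1$ independent of~$h\leq h_0$. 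The estimate you actually need is Lemma~\ref{lemma:estimate_uniform_zeta}, which gives~$(1-e^{-h/\varepsilon})\abs{\zeta(X_n^\varepsilon)}\leq C_0\sqrt h$ with~$C_0$ independent of both~$h$ and~$\varepsilon$ — that gives the desired bound directly. The same remark applies to your claim that the term~$(1-e^{-h/\varepsilon})^2\abs{G^{-1}\zeta(X_n^\varepsilon)}^2$ in~$A-X_n^\varepsilon$ is~$\OO(h)$: it follows from Lemma~\ref{lemma:estimate_uniform_zeta} squared, not from Theorem~\ref{theorem:convergence_integrator_epsilon}. Once those references are replaced, the rest of your bookkeeping (the elementary inequality~$\varepsilon(1-e^{-h/\varepsilon})\leq\min(h,\varepsilon)$, the stochastic mismatch bound, the Lipschitzness of~$y\mapsto G_y^{-1}(x)$) is sound and closes the proof.
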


\section{Weak convergence analysis}
\label{section:proofs}

In this section, we present the uniform weak expansion and the stability properties of the solution~$X^\varepsilon(t)$ to the penalized dynamic~\eqref{equation:Langevin_epsilon} and of the uniform integrator~$(X_n^\varepsilon)$ given in~\eqref{equation:UA_integrator}. We then use these results to prove Theorem~\ref{theorem:uniform_consistency_UA_scheme}, Theorem~\ref{theorem:convergence_integrator_epsilon} and Theorem~\ref{theorem:uniform_fixed_point_problem}.
Let us begin the analysis with a few technical lemmas and notations.

\begin{lemma}
\label{lemma:estimate_uniform_zeta}
Let~$(X_n^\varepsilon)$ be given by~\eqref{equation:UA_integrator}, then, under Assumption~\ref{assumption:regularity_ass}, there exists a constant~$C_0>0$ independent of~$X_0 \in \MM$,~$\varepsilon$, and~$h$ such that, for all~$n\geq 0$,
\begin{equation}
\label{equation:convergence_zeta_numeric_lemma}
(1-e^{-h/\varepsilon})\abs{\zeta(X^\varepsilon_n)}\leq C_0\sqrt{h}.
\end{equation}
\end{lemma}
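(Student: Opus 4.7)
The scheme directly supplies a linear recursion for $\zeta(X_n^\varepsilon)$ with contraction rate $\alpha:=e^{-h/\varepsilon}$, so the plan is to iterate it from the initial value $\zeta(X_0^\varepsilon)=0$ (which holds since $X_0\in\MM$), control the inhomogeneous forcing uniformly in $\varepsilon$, and use the geometric sum of the $\alpha^k$ to absorb exactly the prefactor $1-e^{-h/\varepsilon}$ appearing on the left-hand side of~\eqref{equation:convergence_zeta_numeric_lemma}.

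Concretely, from the second half of~\eqref{equation:UA_integrator} I would write
$$\zeta(X_{n+1}^\varepsilon)=\alpha\ts\zeta(X_n^\varepsilon)+R_n(X_n^\varepsilon,\xi_n,h,\varepsilon),$$
where $R_n$ collects the three remaining groups of terms. Since $X_0\in\MM$, iterating yields
$$\zeta(X_n^\varepsilon)=\sum_{k=0}^{n-1}\alpha^{n-1-k}R_k,\qquad |\zeta(X_n^\varepsilon)|\leq \Bigl(\sup_{0\leq k<n}|R_k|\Bigr)\sum_{k=0}^{n-1}\alpha^{n-1-k}\leq \frac{\sup_k|R_k|}{1-\alpha}.$$
Multiplying by $1-\alpha$, it only remains to show $\sup_k|R_k|\leq C\sqrt{h}$ with $C$ independent of $\varepsilon$ and $h$.

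The three pieces of $R_k$ are estimated separately by Assumption~\ref{assumption:regularity_ass} (boundedness of $f$, $g$, $g'$) together with the bounded random vector $\xi_n$ satisfying~\eqref{equation:discrete_rv}, and the elementary inequalities $1-e^{-x}\leq x$ and $1-e^{-2x}\leq 2x$ for $x\geq 0$. The noise term is bounded by $C\sqrt{(\varepsilon/2)(1-e^{-2h/\varepsilon})}\leq C\sqrt{h}$; the $\varepsilon(1-e^{-h/\varepsilon})(g^T f+\cdots)$ term is bounded by $C\ts\varepsilon(1-e^{-h/\varepsilon})\leq Ch$; and for the last group, applying the triangle inequality together with $\varepsilon(1-e^{-h/\varepsilon})\leq h$ and $\sqrt{(\varepsilon h/2)(1-e^{-2h/\varepsilon})}\leq h$ gives a bound by $2Ch$. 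For $h\leq 1$ each contribution is at most $C\sqrt{h}$, which completes the estimate.

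There is no real obstacle here: the argument is a Gr\"onwall-type telescoping for a contractive linear recursion, and the only point requiring care is the uniform-in-$\varepsilon$ control of the forcing $R_k$, which hinges on the two elementary exponential inequalities above that are tight simultaneously in the two regimes $h\ll\varepsilon$ and $h\gg\varepsilon$. The cancellation between the prefactor $(1-e^{-h/\varepsilon})$ and the denominator produced by the geometric sum is exactly what allows the bound to be stated in the form~\eqref{equation:convergence_zeta_numeric_lemma} and is the reason the scheme was designed with this particular damping coefficient $e^{-h/\varepsilon}$.
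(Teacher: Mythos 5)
Your proposal is correct and follows essentially the same route as the paper: exploit the linear recursion $\zeta(X_{n+1}^\varepsilon)=e^{-h/\varepsilon}\zeta(X_n^\varepsilon)+R_n$ with $\zeta(X_0)=0$, bound the forcing $R_n$ uniformly in $\varepsilon$ via boundedness of $f$, $g$, $g'$, $\xi_n$ and the elementary inequalities $1-e^{-x}\leq x$, and let the geometric sum cancel the factor $1-e^{-h/\varepsilon}$. The paper merely packages the intermediate bound as $|R_n|\leq C\sqrt{\varepsilon(1-e^{-2h/\varepsilon})}$ before reducing to $C\sqrt{h}$, whereas you bound each of the three pieces by $C\sqrt{h}$ or $Ch$ directly; this is a cosmetic difference, not a different proof.
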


\begin{proof}
As~$f$,~$\xi_n$,~$g$, and~$g'$ are bounded, we obtain from the definition of the integrator given by~\eqref{equation:UA_integrator} that
$$
|\zeta(X^\varepsilon_{n+1})|\leq e^{-h/\varepsilon} |\zeta(X^\varepsilon_n)|
+C \sqrt{\varepsilon(1-e^{-2h/\varepsilon})},
$$
where we used that the function~$(1-e^{-x})/x$ is bounded for~$x>0$.
Thus, as~$\zeta(X_0)=0$,
\begin{align*}
|\zeta(X^\varepsilon_n)|
&\leq e^{-nh/\varepsilon}|\zeta(X_0)|
+C\sqrt{\varepsilon(1-e^{-2h/\varepsilon})} \sum_{k=0}^{n-1} e^{-kh/\varepsilon}\\
&\leq C \frac{\sqrt{\varepsilon(1-e^{-2h/\varepsilon})}}{1-e^{-h/\varepsilon}}
\leq C_0  \frac{\sqrt{h}}{1-e^{-h/\varepsilon}},
\end{align*}
where the constant~$C_0$ does not depend on~$X_0$,~$\varepsilon$,~$n$, and~$h$. This yields the estimate~\eqref{equation:convergence_zeta_numeric_lemma}.
\end{proof}

The estimate~\eqref{equation:convergence_zeta_numeric_lemma} is a direct consequence of our choice of using bounded random variables in~\eqref{equation:UA_integrator}.
We shall use this estimate extensively in the rest of this section.
Thus, we denote by~$\MM^\varepsilon_h$ the set of vectors~$x\in \R^d$ that satisfy the estimate
\begin{equation}
\label{equation:convergence_zeta_numeric}
(1-e^{-h/\varepsilon})\abs{\zeta(x)}\leq C_0\sqrt{h},
\end{equation}
where~$C_0$ is the constant given in Lemma~\ref{lemma:estimate_uniform_zeta}. The set~$\MM^\varepsilon_h$ is a closed subset of~$\R^d$ that contains~$\MM$. The numerical scheme given by~\eqref{equation:UA_integrator} takes values in~$\MM^\varepsilon_h$. We mention that the convergence results are still valid if the initial condition~$X_0$ of~\eqref{equation:UA_integrator} is chosen in~$\MM^\varepsilon_h$ instead of~$\MM$.

As we aim at writing uniform expansions, we introduce the convenient notation~$R^\varepsilon_h(x)$ for any remainder that satisfies at least~$\abs{\E[R^\varepsilon_h(x)]}\leq C h^{3/2}$, where~$C$ is independent of~$\varepsilon$,~$h$, and~$x$.

The following result serves as a technical tool for simplifying the calculations in the uniform expansions of Subsection~\ref{section:expansion_exact_sol} and Subsection~\ref{section:expansion_num_sol}. It is proved with elementary computations.
\begin{lemma}
\label{lemma:rewriting_fixman_correction}
The Fixman correction can be rewritten in the following way:
$$
\frac{\sigma^2}{4}\nabla\ln(\det(G))
=\frac{\sigma^2}{2}\sum_{i=1}^d g'(e_i) G^{-1}g^T e_i
=\frac{\sigma^2}{2}\sum_{i=1}^d g'(g G^{-1}g^T e_i) G^{-1}g^T e_i,
$$
where~$(e_i)$ is the canonical basis of~$\R^d$.
\end{lemma}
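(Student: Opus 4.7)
The identity is purely algebraic, so the plan is to compute everything in coordinates, invoke Jacobi's formula for the gradient of a log-determinant, and then exploit two structural facts: the symmetry of second derivatives (since $g=\nabla\zeta$), and the identity $g\ts G^{-1}g^Tg=g$ (which holds because $G=g^Tg$).

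First, I would start from Jacobi's formula: component-wise
$$\partial_k\ln(\det G)=\operatorname{tr}(G^{-1}\partial_k G).$$
Using $G=g^Tg$ we have $\partial_k G=(\partial_k g)^Tg+g^T\partial_k g$. Because $G^{-1}$ is symmetric and $\operatorname{tr}(A)=\operatorname{tr}(A^T)$, the two terms contribute equally under the trace, giving
$$\partial_k\ln(\det G)=2\operatorname{tr}\!\bigl(G^{-1}g^T\partial_k g\bigr).$$
Writing this out with indices yields
$$\tfrac{1}{2}\partial_k\ln(\det G)=\sum_{i,j,l}(\partial_k g)_{ij}(G^{-1})_{jl}g_{il}=\sum_{i,j,l}\partial_k\partial_i\zeta_j\,(G^{-1})_{jl}g_{il}.$$

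Second, I would expand the middle expression $\sum_i g'(e_i)G^{-1}g^Te_i$ in the same coordinates. Its $k$-th component is
$$\sum_{i,j,l}(\partial_i g)_{kj}(G^{-1})_{jl}g_{il}=\sum_{i,j,l}\partial_i\partial_k\zeta_j\,(G^{-1})_{jl}g_{il},$$
which coincides with the formula above by Schwarz's theorem on mixed partials. This establishes the first equality.

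Third, for the second equality I would examine $\sum_i g'(gG^{-1}g^Te_i)G^{-1}g^Te_i$. Let $P=gG^{-1}g^T$, so that $(gG^{-1}g^Te_i)_m=P_{mi}$. The $k$-th component of the sum is
$$\sum_{i,j,l,m}\partial_m\partial_k\zeta_j\,P_{mi}\,g_{il}\,(G^{-1})_{jl}.$$
Collapsing the sum over $i$ gives $\sum_i P_{mi}g_{il}=(Pg)_{ml}$, and the key identity $Pg=gG^{-1}g^Tg=gG^{-1}G=g$ reduces this to $g_{ml}$. The remaining expression $\sum_{j,l,m}\partial_m\partial_k\zeta_j\,g_{ml}(G^{-1})_{jl}$ is exactly (after relabeling $m\to i$) the formula for the middle expression obtained above, finishing the proof.

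There is no real obstacle; the whole lemma is a bookkeeping exercise. The only content is recognising that Jacobi's formula plus symmetry of $g^Tg$ lets one drop the factor of two, that Schwarz's theorem supplies the first equality, and that the projection identity $gG^{-1}g^Tg=g$ supplies the second.
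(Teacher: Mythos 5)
Your proof is correct and follows essentially the same route as the paper's: Jacobi's formula for $\nabla\ln\det G$, the symmetry of second derivatives coming from $g=\nabla\zeta$, and the algebraic identity $gG^{-1}g^Tg=g$. The only difference is presentational — you work throughout in index notation, whereas the paper phrases the same three steps via the trace operator and the symmetry relation $x^Tg'(y)=y^Tg'(x)$; the content is identical.
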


\begin{proof}[Proof of Lemma~\ref{lemma:rewriting_fixman_correction}]
Using that~$G=g^T g$ is a symmetric matrix, that~$g=\nabla \zeta$ is a gradient (which implies~$x^Tg'(y)=y^Tg'(x)$)  and the standard properties of the trace operator~$\Trace$, we deduce that
\begin{align*}
\partial_j (\ln(\det(G)))
&=\Trace(G^{-1}\partial_j G)
=2\Trace(G^{-1}(g'(e_j))^T g)
=2\Trace(g'(e_j) G^{-1} g^T)\\
&=2\sum_{i=1}^d e_i^T g'(e_j) G^{-1} g^T e_i
=2\sum_{i=1}^d e_j^T g'(e_i) G^{-1} g^T e_i,
\end{align*}
that is,~$\nabla\ln(\det(G))
=2\sum_{i=1}^d g'(e_i) G^{-1}g^T e_i$. For the second equality, we have
\begin{align*}
\sum_{i=1}^d e_j^T g'(g G^{-1}g^T e_i) G^{-1}g^T e_i
&=\sum_{i=1}^d e_i^T g G^{-1} g^T g'(e_j) G^{-1}g^T e_i\\
&=\Trace(g G^{-1} g^T g'(e_j) G^{-1}g^T)
=\Trace(g'(e_j) G^{-1}g^T)\\
&=\sum_{i=1}^d e_i^T g'(e_j) G^{-1}g^T e_i
=\sum_{i=1}^d e_j^T g'(e_i) G^{-1} g^T e_i.
\end{align*}
Hence we get the result.
\end{proof}

\subsection{Uniform expansion of the exact solution}
\label{section:expansion_exact_sol}

We consider the exact solution~$X^\varepsilon(t)$ of the penalized Langevin dynamics~\eqref{equation:Langevin_epsilon}, with the initial condition~$X_0=x$, that we assume is deterministic for simplicity. Then,~$X^\varepsilon$ satisfies the following expansion in~$h$ that is uniform with
respect to~$\varepsilon$.
\begin{proposition}
\label{proposition:weak_uniform_expansion_penalized_dynamic}
Under Assumption~\ref{assumption:regularity_ass}, there exists~$h_0>0$ such that for all~$h\leq h_0$, if~$X^\varepsilon$ is the solution of the penalized Langevin dynamics~\eqref{equation:Langevin_epsilon} starting at~$x\in\MM^\varepsilon_h$, then, for all~$\phi\in \CC^3_P$, the following estimate holds:
\begin{equation}
\label{equation:weak_expansion_penalized_dynamic}
\abs{\E[\phi(X^\varepsilon(h))]-\E[\phi(x+\sqrt{h}A^\varepsilon_h(x)+h B^\varepsilon_h(x))]}\leq C(1+\abs{x}^K) h^{3/2},
\end{equation}
where~$C$ is independent of~$h$ and~$\varepsilon$ and where the functions~$A^\varepsilon_h$ and~$B^\varepsilon_h$ are given by
\begin{align*}
A^\varepsilon_h&=\sigma \xi
+\frac{e^{-h/\varepsilon}-1}{\sqrt{h}} g G^{-1} \zeta
+\sigma\Big(\sqrt{\frac{\varepsilon}{2h}(1-e^{-2h/\varepsilon})}-1\Big)g G^{-1}g^T \xi,\\
B^\varepsilon_h&=f
+\Big(\frac{\varepsilon}{h}(1-e^{-h/\varepsilon})-1\Big)gG^{-1}g^T f
+\frac{\sigma^2}{2}\Big(\frac{\varepsilon}{h}(1-e^{-h/\varepsilon})-1\Big) gG^{-1} \Div(g)\\
&+\frac{\sigma^2 \varepsilon}{8h}(1-e^{-2h/\varepsilon})\nabla\ln(\det(G))
+\frac{\sigma^2 \varepsilon}{8h}(1-e^{-h/\varepsilon})^2 g G^{-1} g^T\nabla\ln(\det(G))\\
&+\frac{1}{2h}(e^{-h/\varepsilon}-1)^2 \Big(g'(g G^{-1} \zeta) G^{-1} \zeta
-g G^{-1} g^T g'(g G^{-1} \zeta) G^{-1} \zeta
\\&-gG^{-1}(g'(g G^{-1} \zeta))^T g G^{-1} \zeta\Big)
+\sigma^2\Big(1+\frac{\varepsilon}{h}(e^{-h/\varepsilon}-1)\Big)\sum_{i=1}^d  g G^{-1}(g'(e_i))^T g G^{-1}g^T e_i\\
&+\frac{\sigma^2}{4} \Big(\frac{\varepsilon}{h}(e^{-2h/\varepsilon}-4e^{-h/\varepsilon}+3)-2\Big) \sum_{i=1}^d gG^{-1}(g'(g G^{-1}g^T e_i))^T g G^{-1}g^T e_i,
\end{align*}
with~$\xi$ a discrete bounded random vector that satisfies~\eqref{equation:discrete_rv}, and where the functions~$A^\varepsilon_h$ and~$B^\varepsilon_h$ are bounded uniformly in~$\varepsilon$ and~$h$ on~$\MM^\varepsilon_h$.
\end{proposition}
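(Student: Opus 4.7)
The plan is to exploit the identity $g^T g G^{-1} = I_q$ (which follows from $G = g^T g$), so that It\^o's formula applied to $\zeta(X^\varepsilon(t))$ yields a genuine Ornstein--Uhlenbeck-type equation
\[
d\zeta(X^\varepsilon) = -\frac{1}{\varepsilon}\zeta(X^\varepsilon)\,dt + b(X^\varepsilon)\,dt + \sigma g^T(X^\varepsilon)\,dW,
\]
with slow drift $b = g^T f + \frac{\sigma^2}{4} g^T \nabla \ln(\det G) + \frac{\sigma^2}{2}\Div(g)$. I would first freeze the slow coefficients at the initial point $x$ and solve the resulting linear SDE by variation of constants, obtaining
\[
\zeta(X^\varepsilon(h)) \approx e^{-h/\varepsilon}\zeta(x) + \varepsilon(1 - e^{-h/\varepsilon})b(x) + \sigma \int_0^h e^{-(h-s)/\varepsilon} g^T(x)\,dW(s).
\]
This expression already matches the constraint line of the \methodUA~integrator~\eqref{equation:UA_integrator} and accounts for every exponential factor that appears in $A^\varepsilon_h$ and $B^\varepsilon_h$.

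Next, I would substitute this explicit Ornstein--Uhlenbeck expression for $\zeta(X^\varepsilon(s))$ into the Duhamel form of $X^\varepsilon(h)-x$ and Taylor expand the smooth coefficients to second order around $x$. The elementary integrals $\int_0^h e^{-s/\varepsilon}\,ds$, $\int_0^h (1-e^{-s/\varepsilon})\,ds$, and $\int_0^h (1-e^{-s/\varepsilon})^2\,ds$ produce the combinations $\varepsilon(1-e^{-h/\varepsilon})$, $h-\varepsilon(1-e^{-h/\varepsilon})$, and the peculiar $\varepsilon(e^{-2h/\varepsilon}-4e^{-h/\varepsilon}+3)/2$ visible in $B^\varepsilon_h$. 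Second order Taylor expansion of $gG^{-1}$ inside the stiff integral generates the quadratic-in-$\zeta$ terms of the shape $g'(gG^{-1}\zeta)G^{-1}\zeta$ together with their projected corrections, while second order It\^o corrections with diffusion $\sigma g^T$ yield the summations over the canonical basis $(e_i)$; Lemma~\ref{lemma:rewriting_fixman_correction} is then used to recast the Fixman drift into the same summation form so that algebraic simplification produces the precise coefficients of $B^\varepsilon_h$. Matching with the target $\sqrt{h}A^\varepsilon_h + h B^\varepsilon_h$ requires replacing Gaussian Brownian increments by the bounded vector $\xi$ of~\eqref{equation:discrete_rv}, which is valid because only moments up to order four enter a weak expansion with $O(h^{3/2})$ remainder on $\phi \in \CC^3_P$; the characteristic factor $\sqrt{\frac{\varepsilon}{2h}(1-e^{-2h/\varepsilon})}$ in $A^\varepsilon_h$ is then fixed by covariance matching on the normal subspace spanned by $g G^{-1} g^T$.

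The main technical obstacle is ensuring that every remainder is bounded independently of $\varepsilon$. The freezing error contains products of coefficient differences with $\zeta(X^\varepsilon(s))$, but integrals of $\zeta(X^\varepsilon(s)) \otimes \zeta(X^\varepsilon(s))$ remain $O(h)$ uniformly in $\varepsilon$ thanks to the decay factor $e^{-2s/\varepsilon}$, and higher order Taylor remainders are controlled by uniform-in-$\varepsilon$ a~priori moment estimates on $X^\varepsilon(s) - x$ that follow from Assumption~\ref{assumption:regularity_ass} and the boundedness of $f$, $g$, $g'$. The global change of coordinates $\psi$ of Assumption~\ref{assumption:regularity_ass} is precisely what renders these estimates valid on the entire set $\MM^\varepsilon_h$ rather than only in a tubular neighborhood of $\MM$, and the lower bound~\eqref{equation:assumption_G_modified} ensures that $G^{-1}$ and $gG^{-1}$ stay bounded along the trajectory so that the pathwise expansion continues to hold in the stiff regime.
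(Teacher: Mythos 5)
Your plan is conceptually sound but it follows a genuinely different route from the paper, so let me compare. The paper's proof, via Lemma~\ref{lemma:strong_expansion_penalized_dynamic}, works entirely in the $\psi=(\varphi,\zeta)$ coordinates: since $\varphi'g=0$, the stiff term disappears identically from the $\varphi$-equation, so one gets an ordinary non-stiff expansion for $\varphi(X^\varepsilon(h))$ and an exact exponential integrating factor for $\zeta(X^\varepsilon(h))$. Separate weak and strong estimates are proved for each component, then transferred to a general test function via $\phi = \widetilde\phi\circ\psi$ and a two-term Taylor expansion of $\widetilde\phi$ around $\psi(x)$, with the weak estimate controlling the first-order term and Cauchy--Schwarz with the strong estimates controlling the second-order term. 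Your approach instead keeps the Duhamel integral in the original $\R^d$ coordinates, substitutes the frozen Ornstein--Uhlenbeck expression for $\zeta(X^\varepsilon(s))$ into the stiff integral, and extracts the projection structure from a first- and second-order Taylor expansion of $gG^{-1}$ around $x$; indeed the three terms $g'(gG^{-1}\zeta)G^{-1}\zeta - gG^{-1}g^Tg'(gG^{-1}\zeta)G^{-1}\zeta - gG^{-1}(g'(gG^{-1}\zeta))^TgG^{-1}\zeta$ in $B^\varepsilon_h$ are exactly $(gG^{-1})'(gG^{-1}\zeta)\zeta$ expanded by the product rule, whereas in the paper these arise via the identity $\varphi'(g'(v)M)+\varphi''(gM,v)=0$ applied in the $(\varphi,\zeta)$ decomposition.

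Two points you should be more careful about. First, you sketch a pathwise expansion of $X^\varepsilon(h)-x$, but the strong $L^2$ expansion (estimate~\eqref{equation:strong_expansion_1/2} in the paper) carries only an $O(h)$ remainder, not $O(h^{3/2})$; the extra half power in the weak estimate~\eqref{equation:weak_expansion_penalized_dynamic} comes from taking expectations and matching the first two moments, and this is exactly why the paper proves a dedicated weak estimate~\eqref{equation:weak_auxiliary_expansion_penalized_dynamic_psi} separately from the strong ones and then transfers to $\phi$ by Taylor expansion. You acknowledge the moment-matching step but do not explain how you convert your pathwise Duhamel expansion into a bound on $|\E[\phi(X^\varepsilon(h))]-\E[\phi(Y^\varepsilon(h))]|$ at order $h^{3/2}$; without some version of the paper's two-term Taylor argument, your route only establishes an $O(h)$ bound. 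Second, the intermediate-time moment estimates you rely on are uniform only in the weaker form $\E[|X^\varepsilon(s)-x|^{2}]^{1/2}\leq C\sqrt{h}$ for $s\leq h$ (not $C\sqrt{s}$), precisely because the stiff component $(1-e^{-s/\varepsilon})|\zeta(x)|$ is bounded by $C_0\sqrt{h}$ but not by $C\sqrt{s}$; this is enough for the argument, but the lack of a $\sqrt{s}$-rate must be handled with care when you estimate the remainders in the stiff integral. Neither of these is a fatal obstruction, but both need to be spelled out for your version to be complete.
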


Note that for~$q=1$, we obtain the first step of the uniformly accurate discretization~\eqref{equation:UA_integrator_q_1} by gathering all the terms of the form~$gM$ with~$M\in\R^q$ of the weak approximation given in Proposition~\ref{proposition:weak_uniform_expansion_penalized_dynamic} in a Lagrange multiplier~$g \lambda^\varepsilon_1\in \R^q$ and by adding the truncated expansion of~$\zeta(X^\varepsilon(h))$ as a constraint.

The proof of Proposition~\ref{proposition:weak_uniform_expansion_penalized_dynamic} relies on the change of coordinate~$\psi$ given in Assumption~\ref{assumption:regularity_ass}.
Instead of discretizing directly the penalized dynamics~\eqref{equation:Langevin_epsilon}, we first apply the change of coordinate~$\psi$ and we derive an expansion in time of~$X^\varepsilon(t)$ that is uniform in the parameter~$\varepsilon$.
The following result is used for proving Proposition~\ref{proposition:weak_uniform_expansion_penalized_dynamic}.
\begin{lemma}
\label{lemma:strong_expansion_penalized_dynamic}
With the same notations and assumptions as in Proposition~\ref{proposition:weak_uniform_expansion_penalized_dynamic}, the following estimates hold for all~$h\leq h_0$ and all $x\in\MM^\varepsilon_h$:
\begin{align}
\label{equation:strong_expansion_0}
\E[|X^\varepsilon(h)-x|^2]^{1/2}&\leq C\sqrt{h},\\
\label{equation:strong_expansion_1/2}
\E[|X^\varepsilon(h)-(x+\sqrt{h}\widehat{A}^\varepsilon_h(x))|^2]^{1/2}&\leq Ch,\\
\label{equation:weak_auxiliary_expansion_penalized_dynamic_psi}
\abs{\E[\psi(X^\varepsilon(h))]-\E[\psi(x+\sqrt{h}\widehat{A}^\varepsilon_h(x)+h B^\varepsilon_h(x))]}&\leq Ch^{3/2},
\end{align}
where~$C$ is independent of~$h$ and~$\varepsilon$,~$B^\varepsilon_h$ is defined in Proposition~\ref{proposition:weak_uniform_expansion_penalized_dynamic}, and~$\widehat{A}^\varepsilon_h$ is given by
\begin{align*}
\widehat{A}^\varepsilon_h&=\sigma \frac{W(h)}{\sqrt{h}}+\frac{e^{-h/\varepsilon}-1}{\sqrt{h}} g G^{-1} \zeta
+\frac{\sigma}{\sqrt{h}}g G^{-1}g^T\int_0^h (e^{(s-h)/\varepsilon}-1) dW(s).
\end{align*}
\end{lemma}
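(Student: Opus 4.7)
The plan is to exploit the change of coordinates $\psi$ to separate stiff and non-stiff components of the dynamics and to derive an explicit representation of $\zeta(X^\varepsilon(t))$ that enables uniform-in-$\varepsilon$ expansions. The key observation is that, thanks to $g=\nabla\zeta$ and $g^T g G^{-1}=I_q$, applying Itô's formula to $\zeta(X^\varepsilon)$ collapses the stiff drift into a linear damping $-\zeta/\varepsilon$, so that
\begin{equation*}
d\zeta(X^\varepsilon)= -\frac{1}{\varepsilon}\zeta(X^\varepsilon)\,dt + \Big(g^T f+\tfrac{\sigma^2}{4}g^T\nabla\ln(\det G)+\tfrac{\sigma^2}{2}\Delta\zeta\Big)(X^\varepsilon)\,dt +\sigma g^T(X^\varepsilon)\,dW.
\end{equation*}
The variation of constants formula with integrating factor $e^{t/\varepsilon}$ then yields an exact representation of $\zeta(X^\varepsilon(t))$ as $e^{-t/\varepsilon}\zeta(x)$ plus explicit deterministic and stochastic convolutions against the kernel $e^{(s-t)/\varepsilon}$. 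In parallel, applying Itô to $\varphi(X^\varepsilon)$ kills the stiff term entirely because $\varphi'g=0$ by Assumption~\ref{assumption:regularity_ass}, so $\varphi(X^\varepsilon)$ satisfies a non-stiff SDE with bounded coefficients.

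For estimate~\eqref{equation:strong_expansion_0}, I would use the explicit formula for $\zeta(X^\varepsilon(h))$: the deterministic piece is bounded using $\varepsilon(1-e^{-h/\varepsilon})\leq h$; the stochastic piece is controlled by Itô isometry via $\varepsilon(1-e^{-2h/\varepsilon})\leq 2h$; and the initial term $e^{-h/\varepsilon}\zeta(x)$ is handled together with $(1-e^{-h/\varepsilon})|\zeta(x)|\leq C_0\sqrt{h}$ (Lemma~\ref{lemma:estimate_uniform_zeta}, since $x\in\MM^\varepsilon_h$). Combined with the standard $L^2$ estimate for the non-stiff SDE governing $\varphi(X^\varepsilon(h))-\varphi(x)$ and the Lipschitz continuity of $\psi^{-1}$ (which follows from $|\psi'|\geq c$), this transfers back to~\eqref{equation:strong_expansion_0} in the Euclidean norm.

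For~\eqref{equation:strong_expansion_1/2}, I would return to the Duhamel formula for $X^\varepsilon(h)-x$ and focus on the stiff term $-\frac{1}{\varepsilon}\int_0^h (gG^{-1}\zeta)(X^\varepsilon(s))\,ds$. Freezing $gG^{-1}$ at $x$ and substituting the explicit formula for $\zeta(X^\varepsilon(s))$ produces, after the elementary computation $\int_0^h e^{-s/\varepsilon}ds=\varepsilon(1-e^{-h/\varepsilon})$, the term $(e^{-h/\varepsilon}-1)(gG^{-1}\zeta)(x)$. A Fubini swap on the stochastic double integral $\int_0^h \int_0^s e^{(u-s)/\varepsilon} dW(u) ds = \int_0^h \varepsilon(1-e^{(u-h)/\varepsilon})\,dW(u)$ produces the stochastic correction $\sigma(gG^{-1}g^T)(x)\int_0^h (e^{(s-h)/\varepsilon}-1)dW(s)$. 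Together with the Brownian increment $\sigma W(h)$ this identifies $\sqrt{h}\,\widehat A^\varepsilon_h(x)$. The freezing errors are bounded in $L^2$ by $Ch$ using the Lipschitz regularity of $f$, $g$, $G^{-1}$, estimate~\eqref{equation:strong_expansion_0}, and uniform bounds such as $\frac{1}{\varepsilon}\int_0^h e^{(s-h)/\varepsilon}ds\leq 1$; the non-stiff drift integrals $\int_0^h f\,ds$ and the Fixman integral contribute $O(h)$ directly.

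Finally for the weak expansion~\eqref{equation:weak_auxiliary_expansion_penalized_dynamic_psi}, I would expand $\psi=(\varphi,\zeta)^T$ componentwise. For $\varphi(X^\varepsilon(h))$, a standard iterated Itô–Taylor expansion for the non-stiff SDE yields the weak expansion up to remainder $h^{3/2}$ as in~\cite[Chap.\ts 2]{Milstein04snf}, producing the contributions of $f$, the Fixman correction, and $\Delta\varphi$ (which simplifies using $\varphi'g=0$ and Lemma~\ref{lemma:rewriting_fixman_correction}). For $\zeta(X^\varepsilon(h))$, I would iterate the integrating-factor formula once: the coefficients of $g^T f$, $\frac{\sigma^2}{4}g^T\nabla\ln(\det G)$, and $\frac{\sigma^2}{2}\Delta\zeta$ inside the inner integral are Taylor-expanded around $x$, and expectations are computed using $\E[W(s)W(u)^T]=\min(s,u)I_d$, producing the prefactors $\varepsilon(1-e^{-h/\varepsilon})$, $\varepsilon(1-e^{-2h/\varepsilon})$, and $\varepsilon(e^{-2h/\varepsilon}-4e^{-h/\varepsilon}+3)$ appearing in $B^\varepsilon_h$. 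A Taylor expansion of $\psi$ at $x+\sqrt{h}\widehat A^\varepsilon_h(x)+hB^\varepsilon_h(x)$ up to order two then reconstructs both contributions, with the cross terms through $\psi''$ absorbing the quadratic parts of $\widehat A^\varepsilon_h$. The main obstacle is the bookkeeping of the $O(h^{3/2})$ remainder: each coefficient freezing generates a residue that must be controlled independently of $\varepsilon$, using the moment bound~\eqref{equation:strong_expansion_0} and the uniform inequalities $|1-e^{-h/\varepsilon}|\leq 1$ and $\varepsilon(1-e^{-k h/\varepsilon})\leq k h$, together with the $\CC^5_P$ regularity of $\psi$ that gives control on the quadratic error in the Taylor expansion of $\psi$.
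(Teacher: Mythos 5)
The overall architecture of your proposal matches the paper's: use the change of coordinates $\psi=(\varphi,\zeta)$, apply Itô to the two components so that the stiff term vanishes for $\varphi$ (since $\varphi'g=0$) and collapses to linear damping for $\zeta$, then invoke the variation-of-constants formula with kernel $e^{(s-t)/\varepsilon}$, and finally transfer to $\R^d$ via the Lipschitz inverse $\psi^{-1}$. Your derivations of the explicit $\zeta$-formula, the Fubini swap $\int_0^h\int_0^s e^{(u-s)/\varepsilon}dW(u)\,ds=\int_0^h\varepsilon(1-e^{(u-h)/\varepsilon})dW(u)$, and the bookkeeping of the kernel prefactors in $B^\varepsilon_h$ are all correct, as is the treatment of \eqref{equation:strong_expansion_0}.

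There is, however, a gap in your treatment of \eqref{equation:strong_expansion_1/2}. You propose to return to the Duhamel formula for $X^\varepsilon(h)-x$ in the original $\R^d$ coordinates, freeze $gG^{-1}$ and $g^T$ at $x$ inside the stiff term $-\frac{1}{\varepsilon}\int_0^h(gG^{-1}\zeta)(X^\varepsilon(s))\,ds$, and bound the freezing errors by $\OO(h)$ in $L^2$ using the Lipschitz regularity of the coefficients together with \eqref{equation:strong_expansion_0} and the kernel bound $\frac{1}{\varepsilon}\int_0^h e^{(s-h)/\varepsilon}\,ds\leq 1$. But one of the resulting cross terms, namely
\begin{equation*}
-\frac{\sigma}{\varepsilon}\int_0^h\Big[(gG^{-1})(X^\varepsilon(s))-(gG^{-1})(x)\Big]\int_0^s e^{(u-s)/\varepsilon}g^T(X^\varepsilon(u))\,dW(u)\,ds,
\end{equation*}
cannot be controlled by these tools alone: the outer factor is $\OO(\sqrt{s})$ in $L^p$, the inner stochastic integral is $\OO(\sqrt{\min(\varepsilon,s)})$, and a Minkowski/Cauchy--Schwarz estimate produces $\frac{1}{\varepsilon}\int_0^h\sqrt{s}\sqrt{\min(\varepsilon,s)}\,ds\sim h^{3/2}/\sqrt{\varepsilon}$ when $\varepsilon\ll h$, which is precisely the regime the scheme must cover. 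Moreover, the Fubini swap is no longer available for this cross term because $(gG^{-1})(X^\varepsilon(s))$ is not $\FF_u$-measurable for $u<s$. The estimate \eqref{equation:strong_expansion_1/2} is nevertheless true, but the mechanism that makes it uniform in $\varepsilon$ is exactly the paper's route: perform the entire $\OO(\sqrt{h})$ bookkeeping for $\varphi(X^\varepsilon(h))$ and $\zeta(X^\varepsilon(h))$ separately (where the exponential kernel alone provides the damping, and $\varphi'g=0$ removes the stiffness), verify that $\psi(x+\sqrt{h}\widehat{A}^\varepsilon_h(x))$ reproduces the same leading terms up to $\OO(h)$, and only then use the Lipschitz property of $\psi^{-1}$. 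Your direct Duhamel decomposition implicitly requires a nontrivial cancellation among the freezing errors that elementary $L^2$ bounds do not capture, so the argument as sketched does not close. For \eqref{equation:weak_auxiliary_expansion_penalized_dynamic_psi}, your approach is essentially the paper's; you should also make explicit the cancellation identity $\varphi'(g'(v)M)+\varphi''(gM,v)=0$ (obtained by differentiating $\varphi'(gM)=0$), which the paper uses to eliminate the $(1-e^{-h/\varepsilon})^2$ quadratic terms in the $\varphi$-expansion.
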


\begin{proof}[Proof of Proposition~\ref{proposition:weak_uniform_expansion_penalized_dynamic}]
The uniform bounds on~$A^\varepsilon_h(X^\varepsilon(t))$ and~$B^\varepsilon_h(X^\varepsilon(t))$ are obtained straightforwardly by using Assumption~\ref{assumption:regularity_ass} and the fact that~$x\in\MM^\varepsilon_h$.
We prove the local weak order one of the approximation~$Y^\varepsilon(h)=x+\sqrt{h}\widehat{A}^\varepsilon_h(x)+h B^\varepsilon_h(x)$ given in Lemma~\ref{lemma:strong_expansion_penalized_dynamic}.
Let~$\phi\in \CC^3_P$ and~$\widetilde{\phi}=\phi\circ \psi^{-1}$; then a Taylor expansion around~$x$ yields
\begin{align*}
\Big|\E[\phi(X^\varepsilon(h))-\phi(Y^\varepsilon(h))]\Big|
&= \Big|\E[\widetilde{\phi}\circ \psi(X^\varepsilon(h))]-\E[\widetilde{\phi}\circ \psi(Y^\varepsilon(h))]\Big|\\
&\leq 
\Big|\E[\widetilde{\phi}'(\psi(x))(\psi(X^\varepsilon(h))-\psi(Y^\varepsilon(h)))]\Big|\\
&+\frac{1}{2}\Big|\E[\widetilde{\phi}''(\psi(x))(\psi(X^\varepsilon(h))-\psi(x),\psi(X^\varepsilon(h))-\psi(x))\\
&-\widetilde{\phi}''(\psi(x))(\psi(Y^\varepsilon(h))-\psi(x),\psi(Y^\varepsilon(h))-\psi(x))]\Big|\\
&+C(1+\abs{x}^K)h^{3/2}\\
&\leq 
\Big|\widetilde{\phi}'(\psi(x))\Big| \Big|\E[\psi(X^\varepsilon(h))-\psi(Y^\varepsilon(h))]\Big|\\
&+\frac{1}{2}\Big|\widetilde{\phi}''(\psi(x))\Big|
\E[|\psi(X^\varepsilon(h))+\psi(Y^\varepsilon(h))-2\psi(x)|^2]^{1/2}\\
&\cdot \E[|\psi(X^\varepsilon(h))-\psi(Y^\varepsilon(h))|^2]^{1/2}
+C(1+\abs{x}^K)h^{3/2}
\end{align*}
where we used~\eqref{equation:strong_expansion_0}, Assumption~\ref{assumption:regularity_ass}, and the bilinearity of~$\widetilde{\phi}''(\psi(x))$.
With Lemma~\ref{lemma:strong_expansion_penalized_dynamic} and the regularity properties of~$\widetilde{\phi}$ and~$\psi$, we get
\begin{equation}
\label{equation:weak_auxiliary_expansion_penalized_dynamic_proof}
\abs{\E[\phi(X^\varepsilon(h))]-\E[\phi(x+\sqrt{h}\widehat{A}^\varepsilon_h(x)+h B^\varepsilon_h(x))]}\leq  C(1+\abs{x}^K)h^{3/2}.
\end{equation}

In the spirit of~\cite[Chap.\ts 2]{Milstein04snf}, we replace the random variable~$\widehat{A}^\varepsilon_h(x)$ by the random variable~$A^\varepsilon_h(x)$ that share the same expectation and covariance matrix.
Indeed, a calculation gives
\begin{align*}
\Cov(\widehat{A}^\varepsilon_{h,i}(x),\widehat{A}^\varepsilon_{h,j}(x))
&=\sigma^2 \delta_{ij}
+\frac{2\sigma^2}{h} (g G^{-1}g^T)_{ij} \int_0^h (e^{(s-h)/\varepsilon}-1)ds\\
&+\frac{\sigma^2}{h}\sum_{k=1}^d (g G^{-1}g^T)_{ik}(g G^{-1}g^T)_{jk} \int_0^h (e^{(s-h)/\varepsilon}-1)^2 ds\\
&=\sigma^2 \delta_{ij}+\sigma^2 (g G^{-1}g^T)_{ij} \Big(\frac{\varepsilon}{2h}(1-e^{-2h/\varepsilon})-1\Big),
\end{align*}
where we used that $g^T g=G$ and the Itô isometry.
On the other hand, a similar calculation yields
\begin{align*}
\Cov(A^\varepsilon_{h,i}(x),A^\varepsilon_{h,j}(x))
&=\sigma^2 \delta_{ij}
+2\sigma^2 \Big(\sqrt{\frac{\varepsilon}{2h}(1-e^{-2h/\varepsilon})}-1\Big) (g G^{-1}g^T)_{ij}\\
&+\sigma^2\Big(\sqrt{\frac{\varepsilon}{2h}(1-e^{-2h/\varepsilon})}-1\Big)^2 \sum_{k=1}^d (g G^{-1}g^T)_{ik} (g G^{-1}g^T)_{jk}\\
&=\sigma^2 \delta_{ij}+\sigma^2 (g G^{-1}g^T)_{ij} \Big(\frac{\varepsilon}{2h}(1-e^{-2h/\varepsilon})-1\Big).
\end{align*}
Replacing $\widehat{A}^\varepsilon_h(x)$ by $A^\varepsilon_h(x)$ in the weak expansion~\eqref{equation:weak_auxiliary_expansion_penalized_dynamic_proof} gives the estimate~\eqref{equation:weak_expansion_penalized_dynamic}.
\end{proof}

The main ingredient of the proof of Lemma~\ref{lemma:strong_expansion_penalized_dynamic} is the decomposition of the terms of the expansion in a part that stays on the tangent space and a part of the form~$gM$ with~$M\in\R^q$ that is orthogonal to the tangent space.
\begin{proof}[Proof of Lemma~\ref{lemma:strong_expansion_penalized_dynamic}]
As~$\psi^{-1}$ is Lipschitz, we have
\begin{align*}
\E[|X^\varepsilon(h)-x|^2]^{1/2}
&\leq C\E[|\psi(X^\varepsilon(h))-\psi(x)|^2]^{1/2}\\
&\leq C\E[|\varphi(X^\varepsilon(h))-\varphi(x)|^2]^{1/2}
+C\E[|\zeta(X^\varepsilon(h))-\zeta(x)|^2]^{1/2}.
\end{align*}
On the one hand, applying the Itô formula to~$\varphi(X^\varepsilon)$ yields
\begin{align}
\label{equation:varphi_Ito_formula_integral}
\varphi(X^\varepsilon(h))
&=\varphi(x)
+\sigma \int_0^h \varphi'(X^\varepsilon(s)) dW(s)\\
&+\int_0^h [\varphi'f+\frac{\sigma^2}{4}\varphi'\nabla\ln(\det(G))+\frac{\sigma^2}{2}\Delta \varphi](X^\varepsilon(s)) ds, \nonumber
\end{align}
where the term in~$\varepsilon$ vanishes as~$\varphi'g=0$ (see Assumption~\ref{assumption:regularity_ass}).
Assumption~\ref{assumption:regularity_ass} allows us to write the uniform strong expansion
$$\E[|\varphi(X^\varepsilon(h))-\varphi(x)|^2]^{1/2}\leq C\sqrt{h}.$$
On the other hand, for~$\zeta(X^\varepsilon)$, we have
$$
d\zeta(X^\varepsilon)=\sigma g^T(X^\varepsilon) dW+\Big[g^T f+\frac{\sigma^2}{4}g^T\nabla\ln(\det(G))+\frac{\sigma^2}{2}\Div(g)-\frac{1}{\varepsilon}\zeta\Big](X^\varepsilon) dt.
$$
With the variation of constants formula, it rewrites into
\begin{align}
\label{equation:zeta_Ito_formula_integral}
\zeta(X^\varepsilon(h))&=
e^{-h/\varepsilon}\zeta(x)
+\sigma \int_0^h e^{(s-h)/\varepsilon} g^T (X^\varepsilon(s)) dW(s)\\
&+\int_0^h e^{(s-h)/\varepsilon}\Big[g^T f+\frac{\sigma^2}{4}g^T\nabla\ln(\det(G))+\frac{\sigma^2}{2}\Div(g)\Big](X^\varepsilon(s)) ds. \nonumber
\end{align}
As the integrands in~\eqref{equation:zeta_Ito_formula_integral} are bounded (using Assumption~\ref{assumption:regularity_ass}), we get 
$$\E[|\zeta(X^\varepsilon(h))-\zeta(x)|^2]^{1/2}
\leq C((e^{-h/\varepsilon}-1)^2 \abs{\zeta(x)}^2+h)^{1/2}
\leq C\sqrt{h},$$
where we used that~$x\in\MM^\varepsilon_h$.
We thus get the desired estimate~\eqref{equation:strong_expansion_0}.
The estimate~\eqref{equation:strong_expansion_1/2} is obtained with the same arguments by keeping track of the terms of size~$\OO(\sqrt{h})$ in the expansions.

We now prove the weak estimate~\eqref{equation:weak_auxiliary_expansion_penalized_dynamic_psi}.
We denote for simplicity~$Y^\varepsilon(h)=x+\sqrt{h}\widehat{A}^\varepsilon_h(x)+h B^\varepsilon_h(x)$.
Let us first look at the approximation of~$\varphi(X^\varepsilon(h))$.
On the one hand, applying the Itô formula to~$\varphi(X^\varepsilon(t))$ gives
\begin{align*}
\varphi(X^\varepsilon(h))
&=\varphi(x)
+h\varphi'f(x)+h\frac{\sigma^2}{4}\varphi'\nabla\ln(\det(G))(x)+h\frac{\sigma^2}{2}\Delta \varphi(x)
+R^\varepsilon_h(x),
\end{align*}
where we used~\eqref{equation:strong_expansion_0} and we put in~$R^\varepsilon_h(x)$ all the terms that are zero in average.
On the other hand, an expansion in~$h$ of~$\varphi(Y^\varepsilon(h))$ yields
\begin{align*}
\varphi(Y^\varepsilon(h))
&=\varphi 
+\sqrt{h}\varphi'\widehat{A}^\varepsilon_h
+h\Big[\varphi'B^\varepsilon_h+\frac{1}{2}\varphi''(\widehat{A}^\varepsilon_h,\widehat{A}^\varepsilon_h)\Big]
+R^\varepsilon_h\\
&=\varphi
+h\varphi'f
+\frac{\sigma^2 \varepsilon}{8}(1-e^{-2h/\varepsilon})\varphi'\nabla\ln(\det(G))
+\frac{1}{2}(e^{-h/\varepsilon}-1)^2 \varphi' g'(g G^{-1} \zeta) G^{-1} \zeta\\
&+\frac{\sigma^2}{2}\varphi'' (W(h),W(h))
+\frac{1}{2}(e^{-h/\varepsilon}-1)^2\varphi''(g G^{-1} \zeta,g G^{-1} \zeta) \\
&+\frac{\sigma^2}{2}\int_0^h \int_0^h (e^{(s-h)/\varepsilon}-1) (e^{(u-h)/\varepsilon}-1)\varphi''(g G^{-1}g^T dW(s),g G^{-1}g^T dW(u)) \\
&+\sigma^2\int_0^h (e^{(s-h)/\varepsilon}-1) \varphi''(W(h),g G^{-1}g^T dW(s))
+R^\varepsilon_h,
\end{align*}
where we use that~$\varphi'g=0$, we omit the dependence in~$x$ for conciseness, and we put in~$R^\varepsilon_h(x)$ all the terms that are zero in average.
We now replace the random terms by their expectation,
\begin{align*}
\varphi(Y^\varepsilon(h))
&=\varphi 
+h\varphi'f 
+\frac{\sigma^2 \varepsilon}{8}(1-e^{-2h/\varepsilon})\varphi'\nabla\ln(\det(G)) 
+h\frac{\sigma^2}{2}\Delta\varphi \\
&+\frac{1}{2}(e^{-h/\varepsilon}-1)^2 (\varphi'(g'(g G^{-1} \zeta) G^{-1} \zeta)+\varphi''(g G^{-1} \zeta,g G^{-1} \zeta)) \\
&+\frac{\sigma^2}{2} \int_0^h (e^{(s-h)/\varepsilon}-1)^2 ds \sum_{i=1}^d \varphi''(g G^{-1}g^T e_i,g G^{-1}g^T e_i) \\
&+\sigma^2 \int_0^h (e^{(s-h)/\varepsilon}-1) ds \sum_{i=1}^d \varphi''(e_i,g G^{-1}g^T e_i) 
+R^\varepsilon_h.
\end{align*}
Letting~$M\in\R^q$, we differentiate the equality~$\varphi'(gM)=0$. We obtain that for any~$M\in\R^q$ and~$v\in\R^d$ we have~$\varphi'(g'(v)M)+\varphi''(gM,v)=0$. We deduce that
$$\varphi'(g'(g G^{-1} \zeta) G^{-1} \zeta)+\varphi''(g G^{-1} \zeta,g G^{-1} \zeta)=0.$$
Applying Lemma~\ref{lemma:rewriting_fixman_correction}, we get by a direct calculation that
$$\varphi(Y^\varepsilon(h))=\varphi(x)
+h\varphi'f(x)+h\frac{\sigma^2}{4}\varphi'\nabla\ln(\det(G))(x)+h\frac{\sigma^2}{2}\Delta \varphi(x)
+R^\varepsilon_h(x),$$
which gives the desired estimate
\begin{equation}
\label{equation:weak_auxiliary_expansion_penalized_dynamic_varphi}
\abs{\E[\varphi(X^\varepsilon(h))]-\E[\varphi(x+\sqrt{h}\widehat{A}^\varepsilon_h(x)+h B^\varepsilon_h(x))]}\leq Ch^{3/2}.
\end{equation}

For the one-step approximation of~$\zeta(X^\varepsilon(h))$, the Itô formula and the variation of constants formula yield
$$
\zeta(X^\varepsilon(h))
=
e^{-h/\varepsilon}\zeta(x)
+\varepsilon(1-e^{-h/\varepsilon})(g^T f+\frac{\sigma^2}{4}g^T \nabla\ln(\det(G))+\frac{\sigma^2}{2}\Div(g))(x)
+R^\varepsilon_h(x).
$$
For~$\zeta(Y^\varepsilon(h))$, using~$g^T g=G$, we get with the same arguments as for~$\varphi(Y^\varepsilon(h))$ that
\begin{align*}
\zeta(Y^\varepsilon(h))
&=\zeta 
+\sqrt{h}g^T \widehat{A}^\varepsilon_h 
+h\Big[g^T B^\varepsilon_h +\frac{1}{2}(g'(\widehat{A}^\varepsilon_h))^T \widehat{A}^\varepsilon_h \Big]
+R^\varepsilon_h \\
&=
e^{-h/\varepsilon}\zeta
+\varepsilon(1-e^{-h/\varepsilon})g^T f
+\frac{\sigma^2}{2}\Big(\varepsilon(1-e^{-h/\varepsilon})-h\Big) \Div(g)+\frac{\sigma^2}{2} (g' (W(h)))^T W(h)\\
&+\frac{\sigma^2 \varepsilon}{4}(1-e^{-h/\varepsilon})^2 g^T\nabla\ln(\det(G))
+\sigma^2\Big(h+\varepsilon(e^{-h/\varepsilon}-1)\Big)\sum_{i=1}^d (g'(e_i))^T g G^{-1}g^T e_i\\
&+\frac{\sigma^2}{4} \Big(\varepsilon(e^{-2h/\varepsilon}-4e^{-h/\varepsilon}+3)-2h\Big) \sum_{i=1}^d(g'(g G^{-1}g^T e_i))^T g G^{-1}g^T e_i\\
&+\frac{\sigma^2}{2} \int_0^h \int_0^h (e^{(s-h)/\varepsilon}-1)(e^{(u-h)/\varepsilon}-1) (g'(g G^{-1}g^T dW(u)))^T g G^{-1}g^T dW(s) \\
&+\sigma^2 \int_0^h \int_0^h (e^{(s-h)/\varepsilon}-1) (g'(W(h)))^T g G^{-1}g^T dW(s)  
+R^\varepsilon_h.
\end{align*}
We now replace the stochastic integrals by their expectations (putting the remainders in~$R^\varepsilon_h$), and we use Lemma~\ref{lemma:rewriting_fixman_correction} to simplify the expansion. It yields
\begin{align*}
\zeta(Y^\varepsilon(h))
&=e^{-h/\varepsilon}\zeta 
+\varepsilon(1-e^{-h/\varepsilon})(g^T f+\frac{\sigma^2}{4}g^T \nabla\ln(\det(G))+\frac{\sigma^2}{2}\Div(g)) 
+R^\varepsilon_h,
\end{align*}
which implies
\begin{equation}
\label{equation:weak_auxiliary_expansion_penalized_dynamic_zeta}
\abs{\E[\zeta(X^\varepsilon(h))]-\E[\zeta(x+\sqrt{h}\widehat{A}^\varepsilon_h(x)+h B^\varepsilon_h(x))]}\leq Ch^{3/2}.
\end{equation}
Combining the inequalities~\eqref{equation:weak_auxiliary_expansion_penalized_dynamic_varphi} and~\eqref{equation:weak_auxiliary_expansion_penalized_dynamic_zeta} gives the desired weak estimate~\eqref{equation:weak_auxiliary_expansion_penalized_dynamic_psi}.
\end{proof}

To end this subsection, we recall the growth properties of~$X^\varepsilon$ that will be of use in Subsection~\ref{section:proofs_cv_theorems}.
For this particular result, we add the dependency in the initial condition~$x$ of the exact solution of~\eqref{equation:Langevin_epsilon} with the notation~$X^\varepsilon(t,x)$.
\begin{lemma}
\label{lemma:growth_properties_exact_sol}
Under Assumption~\ref{assumption:regularity_ass}, for~$\phi\in \CC^5_P$ and~$t\leq T$ fixed, the function~$\widetilde{\phi}^\varepsilon(x)=\E[\phi( X^\varepsilon(t,x))]$ lies in~$\CC^3_P$ with constants independent of~$t$ and~$\varepsilon$.
\end{lemma}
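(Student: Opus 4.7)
The plan is to reduce the claim to uniform-in-$(t,\varepsilon)$ polynomial-growth bounds on the first three $x$-derivatives of the flow $x\mapsto X^\varepsilon(t,x)$ of~\eqref{equation:Langevin_epsilon}. Once these are in hand, the Fa\`a di Bruno chain rule applied to $\widetilde{\phi}^\varepsilon(x)=\E[\phi(X^\varepsilon(t,x))]$, together with $\phi\in\CC^5_P$, yields $\widetilde{\phi}^\varepsilon\in\CC^3_P$ with constants independent of $t$ and $\varepsilon$. Smoothness of the flow is a standard consequence of Assumption~\ref{assumption:regularity_ass}: the drift of~\eqref{equation:Langevin_epsilon} lies in $\CC^3_P$ with Lipschitz first derivatives, so the $k$-th derivative process $J^{\varepsilon,k}_t := \partial_x^k X^\varepsilon(t,x)$ exists and solves a linear variational SDE driven by the derivatives of the coefficients of~\eqref{equation:Langevin_epsilon}, with inhomogeneous source terms involving $J^{\varepsilon,j}$ for $j<k$.

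The main obstacle is uniformity in $\varepsilon$, since the stiff term $-\varepsilon^{-1}gG^{-1}\zeta$ produces the variational coefficient
$$-\tfrac{1}{\varepsilon}(gG^{-1}\zeta)'(X^\varepsilon) = -\tfrac{1}{\varepsilon}gG^{-1}g^T(X^\varepsilon) + \tfrac{1}{\varepsilon}\cdot\OO(\zeta(X^\varepsilon)),$$
i.e.\ $-1/\varepsilon$ times an orthogonal projection plus a perturbation that is only small near $\MM$. I would overcome this by working in the coordinates $\psi=(\varphi,\zeta)$ of Assumption~\ref{assumption:regularity_ass}. By~\eqref{equation:varphi_Ito_formula_integral}, the component $\varphi(X^\varepsilon)$ satisfies a non-stiff SDE (since $\varphi'g=0$), while by~\eqref{equation:zeta_Ito_formula_integral}, $\eta^\varepsilon_t := \zeta(X^\varepsilon(t))$ satisfies an SDE with the scalar damping $-\eta^\varepsilon/\varepsilon$ and otherwise bounded drift and diffusion. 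Differentiating in $x$, and using that $\psi'(X^\varepsilon)\partial_x X^\varepsilon = (\partial_x\varphi(X^\varepsilon),\partial_x\eta^\varepsilon)^T$, the pair $(\partial_x\varphi(X^\varepsilon),\partial_x\eta^\varepsilon)$ solves a linear SDE with block coefficient matrix of the form
$$\begin{pmatrix} M_{11} & M_{12} \\ M_{21} & M_{22}-\varepsilon^{-1}I_q \end{pmatrix},$$
where the $M_{ij}$ are uniformly bounded in $t,\varepsilon$ and of polynomial growth in $x$, starting from the bounded initial data $\psi'(x)$.

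A Duhamel representation in the $\eta$-block with integrating factor $e^{t/\varepsilon}$, together with the identity $\int_0^t \varepsilon^{-1}e^{-(t-s)/\varepsilon}ds\leq 1$, absorbs the $1/\varepsilon$ damping against the $1/\varepsilon$ coupling and gives an $L^{2p}$ bound on $\partial_x\eta^\varepsilon$ in terms of $\partial_x\varphi(X^\varepsilon)$ uniform in $\varepsilon$ and $t\leq T$. Plugging this into the non-stiff $\varphi$-block yields a standard Gronwall bound on $[0,T]$, and since $\psi'(X^\varepsilon)$ is invertible with uniformly bounded inverse by Assumption~\ref{assumption:regularity_ass}, one recovers the same bound on $\partial_x X^\varepsilon$. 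The second and third variational processes are handled inductively: they solve linear SDEs with the same block structure plus inhomogeneous terms built from products of already-controlled lower-order derivatives, so the same Duhamel/Gronwall argument applies at each order. Combining these uniform bounds with the polynomial growth of $\phi^{(k)}$ for $k\leq 3$ yields the result. The delicate point in the execution will be tracking the polynomial-in-$x$ factors coming from $g''$, $g'''$ and $\nabla\ln\det G$, which are only of polynomial growth rather than bounded, but this is routine given the $\CC^5_P$ assumption on $\psi$ in Assumption~\ref{assumption:regularity_ass}.
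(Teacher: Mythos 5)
Your proposal is correct and follows essentially the same route as the paper: both change to the $\psi=(\varphi,\zeta)$ coordinates of Assumption~\ref{assumption:regularity_ass} so that the stiff term becomes a pure linear damping $-\eta/\varepsilon$ in the $\zeta$-block, then use the bounded exponential integrating factor (the paper packages it as the semigroup $A^\varepsilon(t)$ in a mild formulation for $Y^\varepsilon=\psi\circ X^\varepsilon\circ\psi^{-1}$) to get $\varepsilon$-uniform $L^{2p}$ bounds on the flow and its first three variational derivatives, before composing with the $\CC^5_P$ test function. The only cosmetic difference is that the paper replaces $\phi$ by $\phi\circ\psi^{-1}$ at the outset and differentiates $Y^\varepsilon$ in the transformed variable $y$, whereas you differentiate $X^\varepsilon$ in $x$ and invert $\psi'$ at the end; these are equivalent.
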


\begin{proof}
The standard theory (see, for instance, the textbook~\cite{Milstein04snf}) gives~$\widetilde{\phi}^\varepsilon\in \CC^3$.
With the regularity assumptions on~$\psi$ and its derivatives, it is sufficient to prove that
$
\E[\phi(Y^\varepsilon(t,y))]
$
is in~$\CC^3_P$, where~$Y^\varepsilon(t,y)=\psi(X^\varepsilon(t,\psi^{-1}(x)))$ (replacing~$\phi$ by~$\phi\circ \psi^{-1}$).
We recall from the proof of Lemma~\ref{lemma:strong_expansion_penalized_dynamic} that~$\varphi(X^\varepsilon(t,x))$ satisfies the integral formulation~\eqref{equation:varphi_Ito_formula_integral} and that~$\zeta(X^\varepsilon(t,x))$ satisfies~\eqref{equation:zeta_Ito_formula_integral}.
Putting together these equations, we deduce that~$Y^\varepsilon(t,y)$ satisfies an equation of the form
$$
Y^\varepsilon(t,y)=A^\varepsilon(t) y+\int_0^t A^\varepsilon(t-s) F(Y^\varepsilon(s,y))ds+\int_0^t A^\varepsilon(t-s) G(Y^\varepsilon(s,y))dW(s),$$
where~$F$ and~$G$ are in~$\CC^3_P$ and do not depend on~$\varepsilon$, and
$$A^\varepsilon(t)=\begin{pmatrix}
I_{d-q} &0\\
0&e^{-t/\varepsilon}I_q
\end{pmatrix}.$$
As~$\abs{A^\varepsilon(t)}\leq C$, the process~$Y^\varepsilon(t,y)$ satisfies~$\E[\abs{Y^\varepsilon(t,y)}^{2p}]\leq C(1+\abs{y}^K)$, with~$C$ and~$K$ independent of~$\varepsilon$.
Thus we have
$$\E[\phi(Y^\varepsilon(t,y))]\leq C(1+\E[\abs{Y^\varepsilon(t,y)}^K])\leq C(1+\E[\abs{y}^K]).$$
For the derivatives, we recall from~\cite{Gikhman07sde} that~$Z^\varepsilon(t,y)=\partial_y Y^\varepsilon(t,y)$ satisfies the equation
\begin{align*}
Z^\varepsilon(t,y)&=A^\varepsilon(t) I_d+\int_0^t A^\varepsilon(t-s) F'(Y^\varepsilon(s,y))Z^\varepsilon(t,y)ds\\
&+\int_0^t A^\varepsilon(t-s) G'(Y^\varepsilon(s,y))Z^\varepsilon(t,y)dW(s).
\end{align*}
Applying the same arguments as for~$Y^\varepsilon(t,y)$ yields that~$Z^\varepsilon(t,y)$ has bounded moments of all order and that~$\E[\phi(Z^\varepsilon(t,y))]\leq C(1+\E[\abs{y}^K])$.
The same methodology extends to~$\partial_y^2 Y^\varepsilon(t,y)$ and~$\partial_y^3 Y^\varepsilon(t,y)$.
\end{proof}

\subsection{Uniform expansion and bounded moments of the numerical solution}
\label{section:expansion_num_sol}

In this subsection, we show that the integrator given by~\eqref{equation:UA_integrator} has bounded moments of all order, that it lies on the manifold~$\MM$ in the limit~$\varepsilon\rightarrow 0$, and that it satisfies the same local weak uniform expansion as the exact solution of~\eqref{equation:Langevin_epsilon} (see Proposition~\ref{proposition:weak_uniform_expansion_penalized_dynamic}).

First, the integrator given by~\eqref{equation:UA_integrator} satisfies the following bounded moments property.
\begin{proposition}
\label{proposition:bounded_moments}
Under Assumption~\ref{assumption:regularity_ass},~$(X_n^\varepsilon)$ has bounded moments of any order along time; i.e., for all timestep~$h\leq h_0$ small enough such that~$Nh=T$ is fixed, for all integer~$m\geq 0$,
$$
\sup_{n\leq N} \E[\abs{X_n^\varepsilon}^{2m}] \leq C_m,
$$
where the constant~$C>0$ is independent of~$\varepsilon$ and~$h$.
\end{proposition}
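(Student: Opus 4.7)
The plan is to use the bi-Lipschitz change of coordinates $\psi=(\varphi,\zeta)$ from Assumption~\ref{assumption:regularity_ass} to reduce the claim to bounding the moments of $\varphi(X_n^\varepsilon)$ and $\zeta(X_n^\varepsilon)$ separately, which are controlled by two different mechanisms.

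First, for $\zeta(X_n^\varepsilon)$, the second equation of~\eqref{equation:UA_integrator} is an explicit recursion $\zeta(X_{n+1}^\varepsilon) = e^{-h/\varepsilon}\zeta(X_n^\varepsilon) + M_{n+1} + D_{n+1}$, where $M_{n+1}$ is a martingale increment proportional to $g^T(X_n^\varepsilon)\xi_n$ and $D_{n+1}$ is an $\FF_n$-measurable drift. Using the elementary bounds $\varepsilon(1-e^{-2h/\varepsilon}) \leq 2h$ and $\varepsilon(1-e^{-h/\varepsilon}) \leq h$, Assumption~\ref{assumption:regularity_ass}, and Lemma~\ref{lemma:rewriting_fixman_correction} to handle the Fixman correction, I would show $|M_{n+1}| \leq C\sqrt{h}$ and $|D_{n+1}| \leq Ch$ almost surely, uniformly in $\varepsilon$. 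Since $\zeta(X_0)=0$, multiplying by $e^{nh/\varepsilon}$ transforms the martingale part into a genuine discrete martingale, and the Burkholder--Davis--Gundy inequality combined with the pointwise bound $|M_k|^2\leq Ch$ and the elementary estimate $e^{-2nmh/\varepsilon}(\sum_{k=1}^n e^{2kh/\varepsilon})^m \leq n^m$ yields $\E[|\zeta(X_n^\varepsilon)|^{2m}] \leq C_m T^m$ uniformly in $\varepsilon>0$, $h\leq h_0$, and $n\leq T/h$.

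Second, for $\varphi(X_n^\varepsilon)$, I would write $\Delta X_n = V_n + g(X_n^\varepsilon)\lambda_{n+1}^\varepsilon$, where $V_n$ is the explicit part of the update and is bounded by $C\sqrt{h}$ almost surely thanks to Lemma~\ref{lemma:estimate_uniform_zeta}. Applying the mean-value identity $\zeta(X_{n+1}^\varepsilon)-\zeta(X_n^\varepsilon) = \int_0^1 g^T(X_n^\varepsilon+s\Delta X_n)\,ds\,\Delta X_n$ to the update gives
$$G_{\Delta X_n}(X_n^\varepsilon)\,\lambda_{n+1}^\varepsilon = \zeta(X_{n+1}^\varepsilon)-\zeta(X_n^\varepsilon) - \int_0^1 g^T(X_n^\varepsilon+s\Delta X_n)\,ds\,V_n,$$
and the lower bound in Assumption~\ref{assumption:regularity_ass}, namely $|G_{\Delta X_n}^{-1}| \leq C(1+|\Delta X_n|)$, combined with $|\Delta X_n| \leq C\sqrt{h}+C|\lambda_{n+1}^\varepsilon|$, yields the circular estimate $|\lambda_{n+1}^\varepsilon| \leq C(1+|\lambda_{n+1}^\varepsilon|)(|\zeta(X_n^\varepsilon)|+|\zeta(X_{n+1}^\varepsilon)|+\sqrt{h})$. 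For $h$ small enough and on the event where $|\zeta(X_n^\varepsilon)|$ and $|\zeta(X_{n+1}^\varepsilon)|$ are bounded by a small constant, this inverts into $|\lambda_{n+1}^\varepsilon|+|\Delta X_n| \leq C(\sqrt{h}+|\zeta(X_n^\varepsilon)|+|\zeta(X_{n+1}^\varepsilon)|)$. A Taylor expansion of $\varphi$ around $X_n^\varepsilon$ uses the crucial identity $\varphi' g = 0$ to kill the $\lambda_{n+1}^\varepsilon$-contribution at first order, leaving $|\varphi(X_{n+1}^\varepsilon)-\varphi(X_n^\varepsilon)| \leq C(|V_n|+|\Delta X_n|^2)$. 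Taking $2m$-th moments yields a recursion
$$\E[|\varphi(X_{n+1}^\varepsilon)|^{2m}] \leq (1+Ch)\,\E[|\varphi(X_n^\varepsilon)|^{2m}] + Ch\bigl(1+\E[|\zeta(X_n^\varepsilon)|^{2m}]+\E[|\zeta(X_{n+1}^\varepsilon)|^{2m}]\bigr),$$
which closes by discrete Gronwall using the first step.

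The main obstacle is the control of $\lambda_{n+1}^\varepsilon$ under Assumption~\ref{assumption:regularity_ass} alone. Since $|G_y(x)|$ is only bounded below by $c(1+|y|)^{-1}$, the estimate on the Lagrange multiplier is only local, and the complementary event where $|\zeta(X_n^\varepsilon)|+|\zeta(X_{n+1}^\varepsilon)|$ fails to be small must be absorbed through a Chebyshev-type truncation fed by the $L^p$-control of $\zeta$ obtained in the first step. Under the stronger Assumption~\ref{assumption:regularity_ass_stronger} this difficulty disappears and the circular estimate inverts unconditionally, but in the present setting this probabilistic patching is the technical heart of the argument.
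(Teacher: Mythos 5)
Your overall strategy — push through the bi-Lipschitz change of variables $\psi=(\varphi,\zeta)$, control $\zeta(X_n^\varepsilon)$ first and then $\varphi(X_n^\varepsilon)$ using $\varphi'g=0$ to kill the Lagrange multiplier at first order, and conclude via the Lipschitz bound on $\psi^{-1}$ — is exactly the one in the paper. For the $\zeta$ part, however, your route through the exponential change of weight $e^{nh/\varepsilon}\zeta(X_n^\varepsilon)$ and Burkholder--Davis--Gundy is genuinely different from the paper's, which instead applies the Milstein--Tretyakov binomial recursion (Lemma~1.1.6 of~\cite{Milstein04snf}) directly to $\zeta(X_{n+1}^\varepsilon)=e^{-h/\varepsilon}\zeta(X_n^\varepsilon)+\Delta_n$ using the two one-step estimates $\abs{\E[\Delta_n\mid X_n^\varepsilon]}\leq Ch$ and $\abs{\Delta_n}\leq C\sqrt{h}$. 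Both close cleanly; yours is more explicit about the martingale structure, the paper's fits the standard stability template used for the constrained Euler scheme and is then reused verbatim for $\varphi$.

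For the $\varphi$ part, the obstacle you flag is not actually there, and your circular estimate is weaker than what the scheme gives you. When you compute
$$G_{\Delta X_n}(X_n^\varepsilon)\lambda_{n+1}^\varepsilon=\zeta(X_{n+1}^\varepsilon)-\zeta(X_n^\varepsilon)-\int_0^1 g^T(X_n^\varepsilon+s\Delta X_n)\,ds\,V_n,$$
you bound the first term by $\abs{\zeta(X_n^\varepsilon)}+\abs{\zeta(X_{n+1}^\varepsilon)}$, which can indeed be large when $\varepsilon\gg h$. But the second line of~\eqref{equation:UA_integrator} prescribes $\zeta(X_{n+1}^\varepsilon)-\zeta(X_n^\varepsilon)$ explicitly: it equals $(e^{-h/\varepsilon}-1)\zeta(X_n^\varepsilon)$ plus terms that are $O(\sqrt{h})$ almost surely. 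By Lemma~\ref{lemma:estimate_uniform_zeta}, $(1-e^{-h/\varepsilon})\abs{\zeta(X_n^\varepsilon)}\leq C_0\sqrt{h}$ almost surely, so in fact $\abs{\zeta(X_{n+1}^\varepsilon)-\zeta(X_n^\varepsilon)}\leq C\sqrt{h}$ uniformly in $\varepsilon$. Substituting this, the circular estimate becomes $\abs{\lambda_{n+1}^\varepsilon}\leq C\sqrt{h}(1+\abs{\lambda_{n+1}^\varepsilon})$, which inverts unconditionally for $h\leq h_0$, with no Chebyshev truncation or event splitting required and no appeal to Assumption~\ref{assumption:regularity_ass_stronger}. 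This is exactly the content of Lemma~\ref{lemma:uniform_expansion_lambda}: under Assumption~\ref{assumption:regularity_ass} alone, on $\MM_h^\varepsilon$, both $\abs{X_1^\varepsilon-x}$ and $\abs{\lambda_1^\varepsilon}$ are $O(\sqrt{h})$ pointwise. Once you use that lemma, the $\varphi$ recursion carries no $\zeta$-moment terms and is closed by the same Milstein--Tretyakov argument as for $\zeta$. The ``probabilistic patching'' you identify as the technical heart of the argument is therefore an artifact of an avoidably crude bound, not a genuine difficulty.
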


The integrator~$(X_n^\varepsilon)$ given in~\eqref{equation:UA_integrator} also satisfies a uniform local expansion that is similar to its continuous counterpart presented in Proposition~\ref{proposition:weak_uniform_expansion_penalized_dynamic}.
\begin{proposition}
\label{proposition:uniform_expansion_integrator}
Under Assumption~\ref{assumption:regularity_ass}, there exists~$h_0>0$ such that for all~$h\leq h_0$, if~$(X^\varepsilon_n)$ is the numerical discretization given by~\eqref{equation:UA_integrator} beginning at~$x\in \MM^\varepsilon_h$ (assumed deterministic for simplicity), then, for all test functions~$\phi\in \CC^3_P$, the following estimate holds:
\begin{equation}
\label{equation:weak_expansion_numerical method}
\abs{\E[\phi(X^\varepsilon_1)]-\E[\phi(x+\sqrt{h}A^\varepsilon_h(x)+h B^\varepsilon_h(x))]}\leq C(1+\abs{x}^K) h^{3/2},
\end{equation}
where~$C$ is independent of~$h$ and~$\varepsilon$, and~$A^\varepsilon_h$ and~$B^\varepsilon_h$ are the functions given in Proposition~\ref{proposition:weak_uniform_expansion_penalized_dynamic}.
\end{proposition}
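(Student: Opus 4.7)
The plan is to mirror the proof of Proposition~\ref{proposition:weak_uniform_expansion_penalized_dynamic}, using the change of coordinates $\psi = (\varphi, \zeta)$ from Assumption~\ref{assumption:regularity_ass} to split the analysis into a tangential component $\varphi$ and a normal component $\zeta$. Setting $Y_h^\varepsilon(x) = x + \sqrt{h}A_h^\varepsilon(x) + hB_h^\varepsilon(x)$ and $\widetilde\phi = \phi \circ \psi^{-1}$, I would apply a second-order Taylor expansion of $\widetilde\phi$ at $\psi(x)$ to the difference $\widetilde\phi(\psi(X_1^\varepsilon)) - \widetilde\phi(\psi(Y_h^\varepsilon))$. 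With the regularity of $\psi$ and $\widetilde\phi$, together with a strong estimate $\E[|X_1^\varepsilon-x|^2]^{1/2} \leq C\sqrt{h}$ established en route, this reduces the goal to the two weak estimates $|\E[\varphi(X_1^\varepsilon) - \varphi(Y_h^\varepsilon)]| \leq C(1+|x|^K)h^{3/2}$ and $|\E[\zeta(X_1^\varepsilon) - \zeta(Y_h^\varepsilon)]| \leq C(1+|x|^K)h^{3/2}$.

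The first substantial step is to control the implicit Lagrange multiplier $\lambda_1^\varepsilon$ uniformly in $\varepsilon$. A Taylor expansion of $\zeta$ applied to the expression of $X_1^\varepsilon$ in~\eqref{equation:UA_integrator}, combined with the explicit prescription of $\zeta(X_1^\varepsilon)$ given by the second block of~\eqref{equation:UA_integrator}, yields a fixed-point equation of the form $G(x)\lambda_1^\varepsilon = Z_h^\varepsilon(x) + \text{quadratic correction}$, where $Z_h^\varepsilon(x)$ is explicit. Under Assumption~\ref{assumption:regularity_ass}, and in particular the uniform invertibility estimate~\eqref{equation:assumption_G_modified} on the Gram matrix $G_y(x)$, this is a contraction for $h$ small enough, giving existence, uniqueness, and a uniform expansion $\lambda_1^\varepsilon = \sqrt{h}\lambda^{(1)} + h\lambda^{(2)} + R_h^\varepsilon$ with coefficients bounded independently of $\varepsilon$, as already anticipated by Lemma~\ref{lemma:uniform_expansion_lambda}.

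Armed with this expansion, I would Taylor expand $\varphi(X_1^\varepsilon)$ around $x$. The key algebraic simplification is the identity $\varphi'(x)g(x) = 0$ from Assumption~\ref{assumption:regularity_ass}, which annihilates the linear-in-$\lambda_1^\varepsilon$ contribution; the remaining effect of $g(x)\lambda_1^\varepsilon$ enters only through $\varphi''$ in cubic and higher terms in $\sqrt{h}$, which absorb into $R_h^\varepsilon$. Replacing the bounded discrete noise $\xi_n$ by its moments (matching those of a standard Gaussian up to order four, with higher moments contributing only to $R_h^\varepsilon$), and invoking Lemma~\ref{lemma:rewriting_fixman_correction} to rewrite the Fixman correction, the resulting expansion matches exactly the one computed for $\varphi(Y_h^\varepsilon)$ in the proof of Lemma~\ref{lemma:strong_expansion_penalized_dynamic}. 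For $\zeta(X_1^\varepsilon)$, the constraint block in~\eqref{equation:UA_integrator} specifies the value directly as an explicit function of $x$, $h$, $\varepsilon$, and $\xi_n$; comparing its expectation with the expansion of $\zeta(Y_h^\varepsilon)$ already done in the proof of Lemma~\ref{lemma:strong_expansion_penalized_dynamic} reduces to a moment-matching identity.

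The main obstacle is keeping every constant independent of $\varepsilon$ while the expressions contain the rapidly varying factors $e^{-h/\varepsilon}$, $1-e^{-h/\varepsilon}$, $(1-e^{-h/\varepsilon})^2$, and $\sqrt{\varepsilon(1-e^{-2h/\varepsilon})}$. The uniform bound $\varepsilon(1-e^{-2h/\varepsilon}) \leq 2h$ and the estimate~\eqref{equation:convergence_zeta_numeric} from Lemma~\ref{lemma:estimate_uniform_zeta} are the workhorses; they must be applied repeatedly, particularly to control the quadratic corrections generated by $g(x)\lambda_1^\varepsilon$ and to show that the specific coefficients chosen in~\eqref{equation:UA_integrator} produce exactly the cancellations needed to reproduce $B_h^\varepsilon(x)$ in both the $\varphi$ and $\zeta$ components.
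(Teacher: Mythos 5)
Your outline is sound and the intermediate ingredients you cite (Lemma~\ref{lemma:uniform_expansion_lambda} for the Lagrange multiplier, Lemma~\ref{lemma:rewriting_fixman_correction} for the Fixman term, $\varphi'g=0$, and the $\varepsilon$-uniform bounds from~\eqref{equation:convergence_zeta_numeric}) are indeed the essential ones. However, the paper reaches~\eqref{equation:weak_expansion_numerical method} by a more direct route: after applying Lemma~\ref{lemma:uniform_expansion_lambda}, it substitutes the expansion of $\lambda_1^\varepsilon$ into the first block of~\eqref{equation:UA_integrator}, computes $\zeta(X_1^\varepsilon)$ from that expansion, and matches it order by order in $\sqrt{h}$ against the prescribed constraint block to read off $\lambda_{1,(1/2)}^\varepsilon$ and $\lambda_{1,(1)}^\varepsilon$ explicitly. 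This directly identifies $X_1^\varepsilon = x + \sqrt{h}A_h^\varepsilon(x) + hB_h^\varepsilon(x) + R_h^\varepsilon(x)$ as an identity between random variables, with $|\E[R_h^\varepsilon]|\leq Ch^{3/2}$ and $|R_h^\varepsilon|\leq Ch$; a single Taylor expansion of $\phi$ then finishes. By contrast, you propose to replicate the proof structure of Proposition~\ref{proposition:weak_uniform_expansion_penalized_dynamic}: pass to $\widetilde\phi = \phi\circ\psi^{-1}$, Taylor-expand at $\psi(x)$, and reduce to two separate weak estimates on $\varphi$ and $\zeta$. This works, but it does more than is needed: the $\psi$-splitting is forced for the continuous process because $X^\varepsilon(h)$ is not an explicit function of $x$ and a finite set of random variables, whereas the one-step discrete iterate $X_1^\varepsilon$ \emph{is}, so you can compare it to $Y_h^\varepsilon$ directly once $\lambda_1^\varepsilon$ is expanded. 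Your route also implicitly re-derives the expansions of $\varphi(Y_h^\varepsilon)$ and $\zeta(Y_h^\varepsilon)$ (note that Lemma~\ref{lemma:strong_expansion_penalized_dynamic} gives them for $\widehat A_h^\varepsilon$ built from Wiener increments, not for the discrete $A_h^\varepsilon$, so a moment-matching step is still needed, which you correctly flag). Both arguments are correct; the paper's is shorter because it exploits the explicitness of the numerical iterate.
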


To prove Proposition~\ref{proposition:bounded_moments} and Proposition~\ref{proposition:uniform_expansion_integrator}, we rely on the following lemma, whose proof is postponed to the end of this subsection.
We emphasize that an inequality of the form $\abs{\zeta(x)}\leq C$ does not imply in general that $x$ stays close to $\MM$. That is why we rely in Lemma \ref{lemma:uniform_expansion_lambda} on an estimate of the Lagrange multipliers (using the inequality \eqref{equation:assumption_G_modified}).
This estimate ensures that the method evolves in a neighborhood of the manifold.
\begin{lemma}
\label{lemma:uniform_expansion_lambda}
Under Assumption~\ref{assumption:regularity_ass} and if~$x\in\MM^\varepsilon_h$, there exists~$h_0>0$ such that, for all timestep~$h\leq h_0$, the one-step approximation~$X_1^\varepsilon$ and the Lagrange multiplier~$\lambda^\varepsilon_1$ in the discretization~\eqref{equation:UA_integrator} satisfy
$$
\abs{X_1^\varepsilon-x}\leq C\sqrt{h},
\qquad
\lambda^\varepsilon_1=\sqrt{h}G^{-1}(x)\lambda_{1,(1/2)}^{\varepsilon}+hG^{-1}(x)\lambda_{1,(1)}^{\varepsilon}+R^\varepsilon_h(x),
$$
where~$|\lambda_{1,(1/2)}^{\varepsilon}|\leq C$,~$|\lambda_{1,(1)}^{\varepsilon}|\leq C$, and~$|R^\varepsilon_h(x)|\leq Ch^{3/2}$ with~$C$ independent of~$\varepsilon$,~$h$ and~$x$.
\end{lemma}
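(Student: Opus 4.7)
The plan is to combine uniform size estimates for all terms in the update~\eqref{equation:UA_integrator} with a second-order Taylor expansion of~$\zeta$ at~$x$, closed by a bootstrap argument that exploits the quantitative invertibility~\eqref{equation:assumption_G_modified}. First I would record the size of each explicit piece: on~$\MM^\varepsilon_h$, Lemma~\ref{lemma:estimate_uniform_zeta} gives~$(1-e^{-h/\varepsilon})^2|\zeta(x)|^2\leq C_0^2h$, so the quadratic penalty~$\frac{1}{2}(1-e^{-h/\varepsilon})^2(g'(gG^{-1}\zeta)G^{-1}\zeta)(x)$ is~$O(h)$; combined with~$\varepsilon(1-e^{-2h/\varepsilon})\leq 2h$ for the Fixman term and with the boundedness of~$f$ and~$\xi$, one obtains $Y:=X_1^\varepsilon-x=\sqrt{h}\sigma\xi+h\alpha_h^\varepsilon(x)+g(x)\lambda_1^\varepsilon$ with~$\alpha_h^\varepsilon$ bounded uniformly in~$\varepsilon$,~$h$, and~$x$. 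The same accounting, together with~$\sqrt{\frac{\varepsilon}{2h}(1-e^{-2h/\varepsilon})}\in[0,1]$ and~$\varepsilon(1-e^{-h/\varepsilon})\leq h$, rewrites the right-hand side of the constraint equation as~$e^{-h/\varepsilon}\zeta(x)+\sqrt{h}\sigma\sqrt{\frac{\varepsilon}{2h}(1-e^{-2h/\varepsilon})}g^T(x)\xi+h\gamma_h^\varepsilon(x)$ with~$\gamma_h^\varepsilon$ bounded.

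To bootstrap to~$|Y|\leq C\sqrt{h}$, I would apply the mean value identity~$\zeta(X_1^\varepsilon)-\zeta(x)=\int_0^1 g^T(x+\tau Y)d\tau\,Y$ and substitute the decomposition of~$Y$, obtaining
\begin{equation*}
G_Y(x)\lambda_1^\varepsilon=\bigl[\zeta(X_1^\varepsilon)-\zeta(x)\bigr]-\sqrt{h}\sigma\int_0^1 g^T(x+\tau Y)d\tau\,\xi-h\int_0^1 g^T(x+\tau Y)d\tau\,\alpha_h^\varepsilon(x).
\end{equation*}
The right-hand side is uniformly~$O(\sqrt{h})$: the bracket equals~$(e^{-h/\varepsilon}-1)\zeta(x)+O(\sqrt{h})$ with~$(1-e^{-h/\varepsilon})|\zeta(x)|\leq C_0\sqrt{h}$ by Lemma~\ref{lemma:estimate_uniform_zeta}, and the two integrals are bounded since~$g$ is bounded. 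Then~\eqref{equation:assumption_G_modified} gives~$|\lambda_1^\varepsilon|\leq C(1+|Y|)\sqrt{h}$, and combined with~$|Y|\leq C\sqrt{h}+C|\lambda_1^\varepsilon|$ this produces~$(1-C\sqrt{h})|Y|\leq C\sqrt{h}$; choosing~$h_0$ small enough that~$C\sqrt{h_0}\leq 1/2$ closes the bootstrap and also yields~$|\lambda_1^\varepsilon|\leq C\sqrt{h}$.

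Once~$|Y|\leq C\sqrt{h}$ is known, I would apply a second-order Taylor expansion,~$\zeta(X_1^\varepsilon)=\zeta(x)+g^T(x)Y+\frac{1}{2}\zeta''(x)(Y,Y)+O(|Y|^3)$ with a pathwise~$O(h^{3/2})$ remainder, substitute~$g^T(x)Y=\sqrt{h}\sigma g^T(x)\xi+hg^T(x)\alpha_h^\varepsilon(x)+G(x)\lambda_1^\varepsilon$, and equate with the explicit constraint to isolate
\begin{equation*}
G(x)\lambda_1^\varepsilon=(e^{-h/\varepsilon}-1)\zeta(x)+\sqrt{h}\sigma\Bigl(\sqrt{\frac{\varepsilon}{2h}(1-e^{-2h/\varepsilon})}-1\Bigr)g^T(x)\xi+h\,\delta_h^\varepsilon(x,\xi)+R_h^\varepsilon(x),
\end{equation*}
where~$\delta_h^\varepsilon$ gathers the~$O(h)$ contributions from~$\gamma_h^\varepsilon$,~$g^T(x)\alpha_h^\varepsilon$, and~$\frac{1}{2}\zeta''(x)(Y,Y)$, the last re-expanded using the leading-order form~$Y=\sqrt{h}(\sigma\xi+g(x)G^{-1}(x)\lambda_{1,(1/2)}^\varepsilon)+O(h)$ so that only an~$O(h^{3/2})$ error enters~$R_h^\varepsilon$. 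The~$\sqrt{h}$-coefficient is bounded thanks to Lemma~\ref{lemma:estimate_uniform_zeta} together with the range~$[0,1]$ of~$\sqrt{\frac{\varepsilon}{2h}(1-e^{-2h/\varepsilon})}$, and~$\delta_h^\varepsilon$ is bounded because every ratio~$\frac{\varepsilon}{h}(1-e^{-kh/\varepsilon})$ lies in~$(0,k]$. Multiplying by~$G^{-1}(x)$, which is uniformly bounded thanks to~\eqref{equation:assumption_G_modified} at~$y=0$, yields the claimed decomposition.

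The main obstacle is the bootstrap step: the a priori control on~$\lambda_1^\varepsilon$ needed to close it cannot come from mere invertibility of~$G$ on~$\MM$, because~$x$ is only known to lie in the tube~$\MM^\varepsilon_h$, and~$G_Y(x)$ must be inverted with a bound depending only linearly on~$|Y|$. This is precisely what the strengthened estimate~\eqref{equation:assumption_G_modified} in Assumption~\ref{assumption:regularity_ass} is designed to provide.
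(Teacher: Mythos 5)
Your proof is correct and follows essentially the same strategy as the paper's: decompose the update into uniformly bounded pieces using Lemma~\ref{lemma:estimate_uniform_zeta} and the elementary bounds on the exponential factors, apply the mean-value (or fundamental theorem of calculus) identity to get a linear equation for~$\lambda_1^\varepsilon$, invoke the quantitative invertibility~\eqref{equation:assumption_G_modified} of~$G_Y(x)$ to close the bootstrap~$|Y|\leq C\sqrt h$, and then extract the~$O(h)$ coefficient by a second refinement. The only organizational difference is in this last step: the paper writes the explicit closed form~\eqref{equation:expression_exact_lambda} and Taylor-expands~$G_{X_1^\varepsilon-x}^{-1}(x)$ and the averaged~$g^T$ around~$X_1^\varepsilon=x$, while you instead Taylor-expand~$\zeta$ to second order, isolate~$G(x)\lambda_1^\varepsilon$, and invert only the constant matrix~$G(x)$ -- a minor variant that changes nothing substantive.
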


For proving Proposition~\ref{proposition:bounded_moments}, we apply the change of variable~$\psi$ and we adapt the standard methodology presented in~\cite[Lemma 1.1.6 \& Lemma 2.2.2]{Milstein04snf}.
\begin{proof}[Proof of Proposition~\ref{proposition:bounded_moments}]
We derive from~\eqref{equation:UA_integrator} that
\begin{align*}
\abs{\E[\zeta(X_{n+1}^\varepsilon)-e^{-h/\varepsilon}\zeta(X_n^\varepsilon)|X_n^\varepsilon]}&\leq C h,\\
\abs{\zeta(X_{n+1}^\varepsilon)-e^{-h/\varepsilon}\zeta(X_n^\varepsilon)}&\leq C \sqrt{h}.
\end{align*}
We prove that~$\zeta(X_n)$ has bounded moments by induction on~$n$. The binomial formula yields
\begin{align*}
\E[\abs{\zeta(X_{n+1}^\varepsilon)}^{2m}]
&=\E[\abs{e^{-h/\varepsilon}\zeta(X^\varepsilon_n)+\zeta(X^\varepsilon_{n+1})-e^{-h/\varepsilon}\zeta(X^\varepsilon_n)}^{2m}]\\
&\leq e^{-2mh/\varepsilon} \E[\abs{\zeta(X_n^\varepsilon)}^{2m}]\\
&+C\E[\abs{\zeta(X_n^\varepsilon)}^{2m-1}\abs{\E[\zeta(X_{n+1}^\varepsilon)-e^{-h/\varepsilon}\zeta(X_n^\varepsilon)|X_n^\varepsilon]}]\\
&+C\sum_{k=2}^{2m} \E[\abs{\zeta(X_n^\varepsilon)}^{2m-k} \abs{\zeta(X_{n+1}^\varepsilon)-e^{-h/\varepsilon}\zeta(X_n^\varepsilon)}^k]\\
&\leq \E[\abs{\zeta(X_n^\varepsilon)}^{2m}]+C(1+\E[\abs{\zeta(X_n^\varepsilon)}^{2m}])h.
\end{align*}
Following~\cite[Lemma 1.1.6]{Milstein04snf}, as~$X_0\in\MM$ is bounded, it implies that~$\E[\abs{\zeta(X_n)}^{2m}]$ is bounded uniformly in~$n=0,\dots,N$ and~$\varepsilon$.

Using Lemma~\ref{lemma:uniform_expansion_lambda} and the equality~$\varphi'g=0$, a direct calculation gives
\begin{align}
\label{equation:bounded_moments_sufficient_condition_1}
\abs{\E[\varphi(X_{n+1}^\varepsilon)-\varphi(X_n^\varepsilon)|X_n^\varepsilon]}&\leq C h,\\
\label{equation:bounded_moments_sufficient_condition_2}
\abs{\varphi(X_{n+1}^\varepsilon)-\varphi(X_n^\varepsilon)}&\leq C \sqrt{h}.
\end{align}
Following the same methodology as for~$\E[\abs{\zeta(X_n)}^{2m}]$, the estimates~\eqref{equation:bounded_moments_sufficient_condition_1}-\eqref{equation:bounded_moments_sufficient_condition_2} imply that the quantity~$\E[\abs{\varphi(X_n)}^{2m}]$ is bounded uniformly in~$n=0,\dots,N$ and~$\varepsilon$.
Then, as~$\psi^{-1}$ is Lipschitz, we have
$$
\E[\abs{X_n^\varepsilon}^{2m}]\leq C(1+\E[\abs{\psi(X_n^\varepsilon)}^{2m}])\leq C(1+\E[\abs{\varphi(X_n^\varepsilon)}^{2m}]+\E[\abs{\zeta(X_n^\varepsilon)}^{2m}])\leq C.
$$
Hence we get the result.
\end{proof}

We obtain the uniform expansion of the numerical solution by writing explicitly a uniform expansion of the Lagrange multiplier~$\lambda^\varepsilon_1$, in the spirit of~\cite[Lemma 3.25]{Lelievre10fec}.
\begin{proof}[Proof of Proposition~\ref{proposition:uniform_expansion_integrator}]
Using Lemma~\ref{lemma:uniform_expansion_lambda} and Assumption~\ref{assumption:regularity_ass}, we obtain
$$
X^\varepsilon_1=x+\sqrt{h}\Big[
\sigma \xi
+(gG^{-1})(x)\lambda_{1,(1/2)}^{\varepsilon}
\Big]+R_h^\varepsilon(x),
$$
where the remainder satisfies~$\abs{R_h^\varepsilon(x)}\leq Ch$.
The constraint is then given by
\begin{align*}
\zeta(X^\varepsilon_1)
&=\zeta(x)+\sqrt{h}\Big[
\sigma g^T(x)\xi
+\lambda_{1,(1/2)}^{\varepsilon}
\Big]+R_h^\varepsilon(x).
\end{align*}
On the other hand, we get from the definition of the integrator~\eqref{equation:UA_integrator} that
\begin{align*}
\zeta(X^\varepsilon_1)&=\zeta(x)+\sqrt{h}\Big[
\frac{e^{-h/\varepsilon}-1}{\sqrt{h}}\zeta(x)
+\sigma\sqrt{\frac{\varepsilon}{2h}(1-e^{-2h/\varepsilon})}g^T(x)\xi
\Big]+R_h^\varepsilon(x).
\end{align*}
By identifying the two terms in~$\sqrt{h}$ in the expansions of~$\zeta(X^\varepsilon_1)$, we deduce the value of~$\lambda_{1,(1/2)}^{\varepsilon}$, that is,
$$
\lambda_{1,(1/2)}^{\varepsilon}=
\frac{e^{-h/\varepsilon}-1}{\sqrt{h}} \zeta
+\sigma\Big(\sqrt{\frac{\varepsilon}{2h}(1-e^{-2h/\varepsilon})}-1\Big)g^T \xi.
$$
The expression of~$\lambda_{1,(1/2)}^{\varepsilon}$ and Lemma~\ref{lemma:uniform_expansion_lambda} give
\begin{align}
X^\varepsilon_1&=x+\sqrt{h}A^\varepsilon_h(x)+h\Big[f(x)
+\frac{(1-e^{-h/\varepsilon})^2}{2h}(g'(g G^{-1} \zeta) G^{-1} \zeta)(x)\nonumber\\
\label{equation:expansion_1_X_proof}
&+\frac{\sigma^2 \varepsilon}{8h}(1-e^{-2h/\varepsilon})\nabla\ln(\det(G))(x)+(gG^{-1})(x)\lambda_{1,(1)}^{\varepsilon}
\Big] +R_h^\varepsilon(x),
\end{align}
where~$\abs{R_h^\varepsilon(x)}\leq Ch^{3/2}$.
We then compute the expansion of~$\zeta(X^\varepsilon_1)$, and we compare it with the definition of the integrator~\eqref{equation:UA_integrator} to obtain the expression of~$\lambda_{1,(1)}^{\varepsilon}$.
Inserting this expression in~\eqref{equation:expansion_1_X_proof} gives
$$
X^\varepsilon_1=x+\sqrt{h}A^\varepsilon_h(x)+hB^\varepsilon_h(x)+R_h^\varepsilon(x),
$$
where the remainder satisfies~$\abs{\E[R_h^\varepsilon(x)]}\leq Ch^{3/2}$ and~$\abs{R_h^\varepsilon(x)}\leq Ch$.
A Taylor expansion of~$\phi(X^\varepsilon_1)$ around~$x+\sqrt{h}A^\varepsilon_h(x)+h B^\varepsilon_h(x)$ yields the estimate~\eqref{equation:weak_expansion_numerical method}.
\end{proof}

The proof of Lemma~\ref{lemma:uniform_expansion_lambda} mainly relies on the estimates~\eqref{equation:assumption_G_modified} and~\eqref{equation:convergence_zeta_numeric}.
We refer the reader to~\cite[Lemma 3.3]{Laurent21ocf} and~\cite[Chap.\ts VII]{Hairer10sod} for similar proofs where explicit expressions of Lagrange multipliers are derived.
\begin{proof}[Proof of Lemma~\ref{lemma:uniform_expansion_lambda}]
Let~$x\in\MM^\varepsilon_h$. For brevity, we rewrite the discretization~\eqref{equation:UA_integrator} as
\begin{align*}
X_1^\varepsilon&=x+\sqrt{h}Y_{(1/2)}^\varepsilon(x)+hY_{(1)}^\varepsilon(x)+g(x)\lambda^\varepsilon_1,\\
\zeta(X_1^\varepsilon)&=\zeta(x)+\sqrt{h}\zeta_{(1/2)}^\varepsilon(x)+h\zeta_{(1)}^\varepsilon(x),
\end{align*}
where the functions~$Y_{(1/2)}^\varepsilon$,~$Y_{(1)}^\varepsilon$,~$\zeta_{(1/2)}^\varepsilon$, and~$\zeta_{(1)}^\varepsilon$ are given by
\begin{align*}
Y_{(1/2)}^\varepsilon &=\sigma\xi,\\
Y_{(1)}^\varepsilon &=f 
+\frac{(1-e^{-h/\varepsilon})^2}{2h}(g'(g G^{-1} \zeta) G^{-1} \zeta) 
+\frac{\sigma^2 \varepsilon}{8h}(1-e^{-2h/\varepsilon})\nabla\ln(\det(G)) ,\\
\zeta_{(1/2)}^\varepsilon &=\frac{e^{-h/\varepsilon}-1}{\sqrt{h}}\zeta 
+\sigma\sqrt{\frac{\varepsilon}{2h}(1-e^{-2h/\varepsilon})}g^T \xi,\\
\zeta_{(1)}^\varepsilon &=
\frac{\varepsilon}{h}(1-e^{-h/\varepsilon})\big(g^T f+\frac{\sigma^2}{4} g^T\nabla\ln(\det(G)) +\frac{\sigma^2}{2}\Div(g)\big) \\
&+\sigma^2\Big(\frac{\varepsilon}{h}(1-e^{-h/\varepsilon})-\sqrt{\frac{\varepsilon}{2h}(1-e^{-2h/\varepsilon})} \Big) \sum_{i=1}^d ((g'(gG^{-1}g^Te_i))^T gG^{-1}g^Te_i) \\
&-\sigma^2\Big(\frac{\varepsilon}{h}(1-e^{-h/\varepsilon})-\sqrt{\frac{\varepsilon}{2h}(1-e^{-2h/\varepsilon})} \Big) \sum_{i=1}^d ((g'(e_i))^T gG^{-1}g^Te_i) .
\end{align*}
Using Assumption~\ref{assumption:regularity_ass} and the estimate~\eqref{equation:convergence_zeta_numeric}, the following uniform estimates hold:
$$
\abs{Y_{(i)}^\varepsilon(x)}\leq C,\quad \abs{\zeta_{(i)}^\varepsilon(x)}\leq C,\quad i\in\{1/2,1\}.
$$
The fundamental theorem of calculus yields
$$
\zeta(X_1^\varepsilon)-\zeta(x)=\int_0^1 g^T(x+\tau(X_1^\varepsilon-x))d\tau(X_1^\varepsilon-x)=\sqrt{h}\zeta_{(1/2)}^\varepsilon(x)+h\zeta_{(1)}^\varepsilon(x).
$$
Substituting~$X_1^\varepsilon-x$ then gives
$$
\int_0^1 g^T(x+\tau(X_1^\varepsilon-x))d\tau (\sqrt{h}Y_{(1/2)}^\varepsilon(x)+hY_{(1)}^\varepsilon(x)+g(x)\lambda^\varepsilon_1)
=\sqrt{h}\zeta_{(1/2)}^\varepsilon(x)+h\zeta_{(1)}^\varepsilon(x).
$$
Using Assumption~\ref{assumption:regularity_ass}, we get the following explicit expression of~$\lambda^\varepsilon_1$:
\begin{align}
\label{equation:expression_exact_lambda}
\lambda^\varepsilon_1&=\sqrt{h}G_{X_1^\varepsilon-x}^{-1}(x)
\Big(\zeta_{(1/2)}^\varepsilon(x)-\int_0^1 g^T(x+\tau(X_1^\varepsilon-x))d\tau Y_{(1/2)}^\varepsilon(x)\Big)\\
&+hG_{X_1^\varepsilon-x}^{-1}(x)
\Big(\zeta_{(1)}^\varepsilon(x)-\int_0^1 g^T(x+\tau(X_1^\varepsilon-x))d\tau Y_{(1)}^\varepsilon(x)\Big). \nonumber
\end{align}
Then, the growth assumption~\eqref{equation:assumption_G_modified} on~$G_y^{-1}(x)$ allows us to write
$$\abs{\lambda^\varepsilon_1}\leq C\sqrt{h} (1+\abs{X_1^\varepsilon-x}) \quad \text{and} \quad \abs{X_1^\varepsilon-x}\leq C\sqrt{h}(1+\abs{X_1^\varepsilon-x}).$$
Hence, for~$h\leq h_0$ small enough, we deduce that~$\abs{X_1^\varepsilon-x}\leq C\sqrt{h}$ and
\begin{equation}
\label{equation:expansion_1/2_lambda}
\lambda^\varepsilon_1= \sqrt{h}G^{-1}(x)
(\zeta_{(1/2)}^\varepsilon-g^T Y_{(1/2)}^\varepsilon)(x)+R^\varepsilon_h,
\end{equation}
where~$\abs{R^\varepsilon_h}\leq Ch$.
For the term of size~$\OO(h)$, we first deduce from~\eqref{equation:expansion_1/2_lambda} that 
$$\abs{X_1^\varepsilon-x-\sqrt{h}(Y_{(1/2)}^\varepsilon+gG^{-1}
\zeta_{(1/2)}^\varepsilon-gG^{-1}g^T Y_{(1/2)}^\varepsilon)(x)}\leq Ch.$$
By using this estimate in~\eqref{equation:expression_exact_lambda}, a Taylor expansion yields the desired expansion of~$\lambda^\varepsilon_1$.
\end{proof}

%
%

\subsection{Proofs of the convergence theorems}
\label{section:proofs_cv_theorems}

Now that we have the local uniform expansion of the exact solution and the numerical scheme, as well as the stability property of Proposition~\ref{proposition:bounded_moments}, we are able to prove the main convergence theorems.
\begin{proof}[Proof of Theorem~\ref{theorem:uniform_consistency_UA_scheme}]


We derive the global weak consistency~\eqref{equation:uniform_global_consistency} with techniques similar to the ones presented in~\cite[Chap.\ts 2]{Milstein04snf}.
We denote by~$X^\varepsilon(t,x)$ the solution of the penalized dynamics with initial condition~$x$ and~$X_n^\varepsilon(x)$ the numerical solution with initial condition~$x$.
For~$x\in\MM^\varepsilon_h$, Proposition~\ref{proposition:weak_uniform_expansion_penalized_dynamic} and Proposition~\ref{proposition:uniform_expansion_integrator} yield
$$
\abs{\E[\phi(X^\varepsilon(h,x))-\phi(X_1^\varepsilon(x))|x]}\leq C (1+\abs{x}^K) h^{3/2},
$$
where~$\phi\in \CC^3_P$.
Lemma~\ref{lemma:growth_properties_exact_sol} gives that~$\phi_n(x)=\E[\phi(X^\varepsilon((n-1)h,x))|x]$ is in~$\CC^3_P$.
We rewrite the global error, given by~$E^\varepsilon_h=\abs{\E[\phi(X^\varepsilon(T,X_0))-\phi(X_N^\varepsilon(X_0))]}$, with a telescopic sum,
\begin{align*}
E^\varepsilon_h
&\leq \sum_{n=1}^N \abs{\E[\phi(X^\varepsilon(nh,X^\varepsilon_{N-n}(X_0)))-\phi(X^\varepsilon((n-1)h,X^\varepsilon_{N-n+1}(X_0)))]}\\
&\leq \sum_{n=1}^N \abs{\E[\phi_n(X^\varepsilon(h,X^\varepsilon_{N-n}(X_0)))-\phi_n(X^\varepsilon_1(X^\varepsilon_{N-n}(X_0)))]}\\
&\leq \sum_{n=1}^N \E[\abs{\E[\phi_n(X^\varepsilon(h,x))-\phi_n(X^\varepsilon_1(x))|x=X^\varepsilon_{N-n}(X_0)]}]\\
&\leq \sum_{n=1}^N C (1+\E[\abs{X^\varepsilon_{N-n}(X_0)}^K]) h^{3/2}
\leq C h^{1/2},
\end{align*}
where we used the bounded moments property of Proposition~\ref{proposition:bounded_moments} and~$X_n^\varepsilon\in \MM^\varepsilon_h$ (Lemma~\ref{lemma:estimate_uniform_zeta}).
\end{proof}

\begin{proof}[Proof of Theorem~\ref{theorem:convergence_integrator_epsilon}]
We obtain straightforwardly from the expression of the integrator~\eqref{equation:UA_integrator} that~$\abs{\zeta(X_{n}^\varepsilon)}\leq C_h\sqrt{\varepsilon}$.
Using this estimate and the notation introduced in the proof of Lemma~\ref{lemma:uniform_expansion_lambda}, we observe that
$$
\abs{Y_{(i)}^\varepsilon(X_n^\varepsilon)-Y_{(i)}^0(X_n^\varepsilon)}\leq C_h\sqrt{\varepsilon},\quad \abs{\zeta_{(i)}^\varepsilon(X_n^\varepsilon)}\leq C_h\sqrt{\varepsilon},\quad i\in\{1/2,1\},
$$
where~$Y_{(1/2)}^0(x)=\sigma\xi_n$ and~$Y_{(1)}^0(x)=f(x)$.
The Lagrange multiplier given by~\eqref{equation:expression_exact_lambda} therefore satisfies~$\abs{\lambda^\varepsilon_{n+1}-\widetilde{\lambda^\varepsilon_{n+1}}}\leq C_h \sqrt{\varepsilon}$, where
$$
\widetilde{\lambda^\varepsilon_{n+1}}=-G_{X_{n+1}^\varepsilon-X_n^\varepsilon}^{-1}(X_n^\varepsilon)\int_0^1 g^T(X_n^\varepsilon+\tau(X_{n+1}^\varepsilon-X_n^\varepsilon))d\tau(\sigma\sqrt{h}\xi_n+hf(X_n^\varepsilon)).
$$
Similarly to~\eqref{equation:expression_exact_lambda}, the Lagrange multiplier~$\lambda^0_{n+1}$ of the constrained Euler integrator~\eqref{equation:Euler_Explicit_constrained} is given by
$$
\lambda^0_{n+1}=-G_{X_{n+1}^0-X_n^0}^{-1}(X_n^0)\int_0^1 g^T(X_n^0+\tau(X_{n+1}^0-X_n^0))d\tau(\sigma\sqrt{h}\xi_n+hf(X_n^0)).
$$
Using that~$G_y^{-1}(x)$ is bounded, a straightforward calculation shows that~$G_y^{-1}(x)$ is Lipschitz in~$x$,~$y\in\R^d$, that is, there exists a constant~$L>0$ such that
$$
\abs{G_{y_1}^{-1}(x_1)-G_{y_2}^{-1}(x_2)}\leq L(\abs{x_1-x_2}+\abs{y_1-y_2}), \quad x_1,x_2,y_1,y_2\in\R^d.
$$
Thus, we get the estimate
$$
\abs{\lambda^\varepsilon_{n+1}-\lambda^0_{n+1}}\leq C_h (\sqrt{\varepsilon} +\abs{X_n^\varepsilon-X_n^0})+C\sqrt{h}\abs{X_{n+1}^\varepsilon-X_{n+1}^0},
$$
and, as~$\lambda^\varepsilon_{n+1}$ and~$\lambda^0_{n+1}$ are bounded uniformly in~$\varepsilon$, we have
$$
\abs{g(X_n^\varepsilon)\lambda^\varepsilon_{n+1}-g(X_n^0)\lambda^0_{n+1}}\leq C_h (\sqrt{\varepsilon} +\abs{X_n^\varepsilon-X_n^0})+C\sqrt{h}\abs{X_{n+1}^\varepsilon-X_{n+1}^0}.
$$
From the definitions of~$X_{n+1}^\varepsilon$ in~\eqref{equation:UA_integrator} and~$X_{n+1}^0$ in~\eqref{equation:Euler_Explicit_constrained}, we deduce that
$$\abs{X_{n+1}^\varepsilon-X_{n+1}^0}\leq C_h (\sqrt{\varepsilon} +\abs{X_n^\varepsilon-X_n^0})+C\sqrt{h}\abs{X_{n+1}^\varepsilon-X_{n+1}^0}.$$
If~$h\leq h_0$ is small enough, we obtain 
$$\abs{X_{n+1}^\varepsilon-X_{n+1}^0}\leq C_h (\sqrt{\varepsilon} +\abs{X_n^\varepsilon-X_n^0}).$$
We deduce the estimate~\eqref{equation:weak_CV_epsilon_0_strong} by iterating this inequality for~$n\leq N=T/h$.
\end{proof}

\begin{proof}[Proof of Theorem~\ref{theorem:uniform_fixed_point_problem}]
We take over the notations and the expression~\eqref{equation:expression_exact_lambda} of the Lagrange multiplier~$\lambda^\varepsilon_1$ that we used in the proof of Lemma~\ref{lemma:uniform_expansion_lambda}. Without loss of generality, we concentrate on the first step of the algorithm with~$X_0=x$.
Replacing~$\lambda^\varepsilon_1$ by the explicit formula~\eqref{equation:expression_exact_lambda} in~\eqref{equation:UA_integrator} yields that~$X_1^\varepsilon$ is a fixed point of the following map:
\begin{align*}
F_h^\varepsilon(y)&=x
+\sqrt{h}Y_{(1/2)}^\varepsilon(x)+hY_{(1)}^\varepsilon(x)
+g(x)G_{y-x}^{-1}(x)
\Big[\sqrt{h}\zeta_{(1/2)}^\varepsilon(x)+h\zeta_{(1)}^\varepsilon(x)\\
&-\int_0^1 g^T(x+\tau(y-x))d\tau (\sqrt{h}Y_{(1/2)}^\varepsilon(x)+hY_{(1)}^\varepsilon(x))\Big].
\end{align*}
For~$y_1$,~$y_2\in\R^d$, Assumption~\ref{assumption:regularity_ass} gives
\begin{align*}
\abs{F_h^\varepsilon(y_2)-F_h^\varepsilon(y_1)}
&\leq C\sqrt{h}\abs{y_2-y_1},
\end{align*}
where we used that~$G_y^{-1}(x)$ is Lipschitz in~$x$,~$y\in\R^d$.
We deduce that~$F_h^\varepsilon$ is a uniform contraction for~$h\leq h_0$ small enough.
\end{proof}

\section{Numerical experiments}
\label{section:numerical_experiments}

In this section, we perform numerical experiments to confirm the theoretical findings, on a torus in~$\R^3$ and on the orthogonal group in high dimension and codimension, in the spirit of the experiments in~\cite{Zappa18mco,Zhang19eso,Laurent21ocf}.

\subsection{Uniform approximation for the invariant measure on a torus}

We consider the example in codimension one of a torus in~$\R^3$.
We apply the new method given by the discretization~\eqref{equation:UA_integrator_q_1} for sampling the invariant measure of~\eqref{equation:Langevin_epsilon} for different steps~$h$ and parameters~$\varepsilon$, and we compare it with the Euler integrators~\eqref{equation:Euler_EEE} in~$\R^d$ and~\eqref{equation:Euler_Explicit_constrained} on the manifold.
We recall that for dynamics of the form~\eqref{equation:Langevin_epsilon}, the weak convergence implies the convergence for the invariant measure (see~\cite{Talay90eot}).
The numerical experiments of this subsection hint that the uniform accuracy property also extends to the approximation of the invariant measure, but we leave the mathematical analysis for the invariant measure for future work.
We consider the constraint~$\zeta(x)=(x_1^2+x_2^2+x_3^2+R^2-r^2)^2-4R^2(x_1^2+x_2^2)$, with~$R=3$ and~$r=1$, and we choose the map~$f(x)=-25(x_1-R+r,x_2,x_3)$, with~$\sigma=\sqrt{2}$, the test function~$\phi(x)=\abs{x}^2$,~$M=10^7$ trajectories, the final time~$T=10$, and the initial condition~$X_0=(R-r,0,0)$.
Increasing the value of~$T$ does not modify the computed averages, which hints that we reached the equilibrium. The factor~$25$ in~$f$ confines the solution in a reasonably small neighborhood of the torus, which allows a faster convergence to equilibrium and to take fewer trajectories.
We compute the Monte-Carlo estimator~$\widebar{J}=\frac{1}{M}\sum_{m=1}^M \phi(X_N^{(m)}) \simeq \E[\phi(X_N)]$, where~$X_n^{(m)}$ is the~$m$-th realization of the integrator at time~$t_n=nh$, and~$N$ is an integer satisfying~$Nh=T$.
We compare this approximation with a reference value of~$\int_{\R^d} \phi d\mu_\infty^\varepsilon$ computed with the uniformly accurate integrator, by using a timestep~$h_{\text{ref}}=2^{-12}$.
In the case of the constrained Euler scheme~\eqref{equation:Euler_Explicit_constrained}, it amounts to comparing an approximation of~$\int_{\MM} \phi d\mu_\infty^0$ with the reference value of~$\int_{\R^d} \phi d\mu_\infty^\varepsilon$. We observe in Figure~\ref{figure:Plot_torus} that the accuracy of the constrained integrator~\eqref{equation:Euler_Explicit_constrained} for solving the unconstrained problem~\eqref{equation:Langevin_epsilon} deteriorates when~$\varepsilon$ grows larger, as~$\mu_\infty^\varepsilon$ deviates from~$\mu_\infty^0$.
The explicit Euler scheme~\eqref{equation:Euler_EEE} faces stability issues when~$\varepsilon\rightarrow 0$.
The accuracy of the new method for solving~\eqref{equation:Langevin_epsilon} does not deteriorate depending on~$\varepsilon$, and it shares a behavior similar to the constrained Euler scheme~\eqref{equation:Euler_Explicit_constrained} when~$\varepsilon\rightarrow 0$, which is in agreement with Theorem~\ref{theorem:convergence_integrator_epsilon}.
The right graph of Figure~\ref{figure:Plot_torus} shows that the behavior of the error in~$\varepsilon$ is the same for any fixed value of~$h$. This is a numerical confirmation of the uniform accuracy property of the discretization~\eqref{equation:UA_integrator} (in the spirit of the numerical experiments in~\cite{Chartier15uan}), as stated in Theorem~\ref{theorem:uniform_consistency_UA_scheme}.
For any fixed $\varepsilon$, the right graph of Figure~\ref{figure:Plot_torus} also shows that the error decreases when $h\rightarrow 0$. A plot of the error against $h$ for a fixed $\varepsilon$ (not included for conciseness) shows a slope of order one (see Remark \ref{remark:discussion_order_UA_method}).


\begin{figure}[ht]
	\begin{minipage}[c]{.49\linewidth}
		\begin{center}
		\includegraphics[scale=0.5]{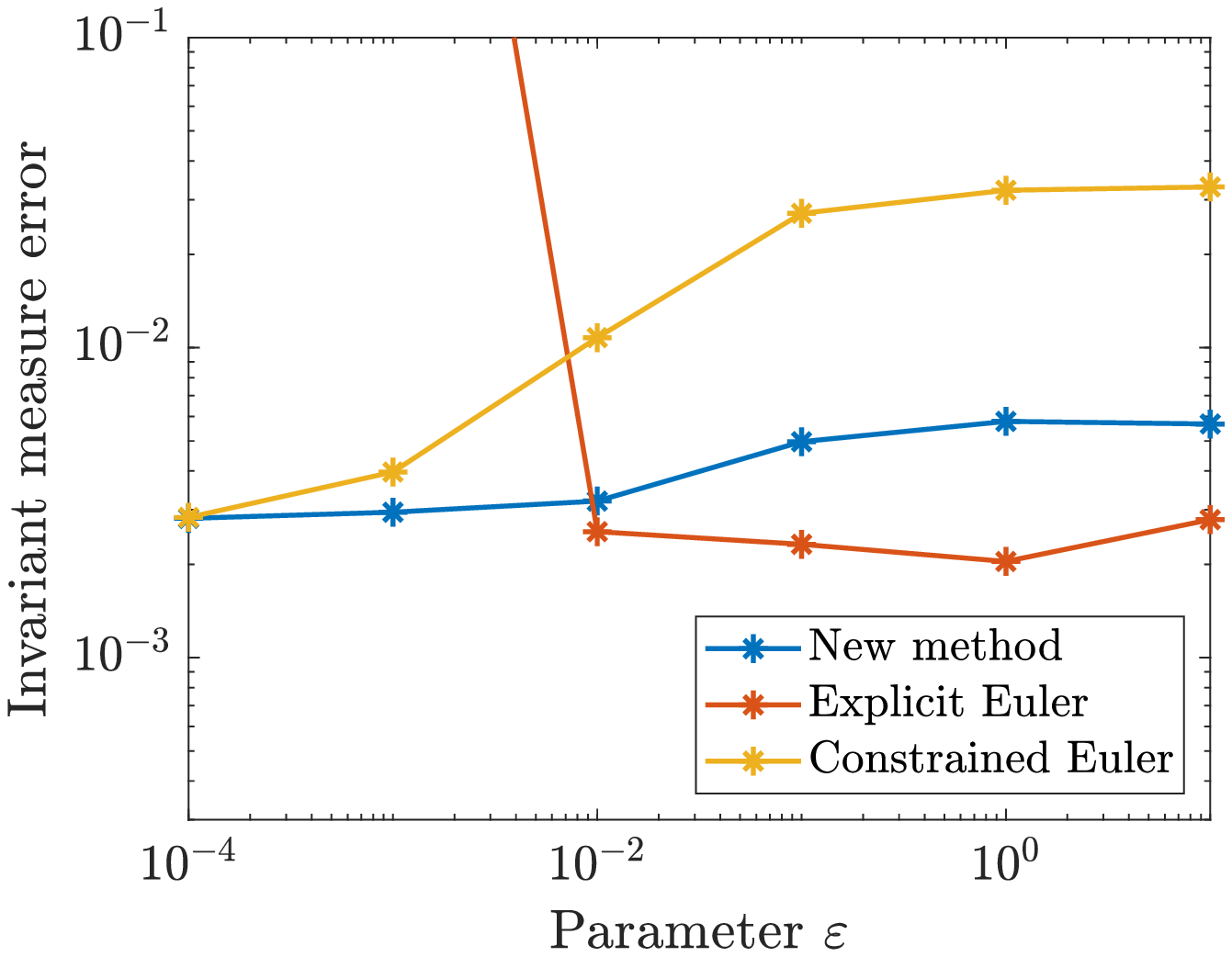}
		\end{center}
	\end{minipage} \hfill
	\begin{minipage}[c]{.49\linewidth}
		\begin{center}
		\includegraphics[scale=0.5]{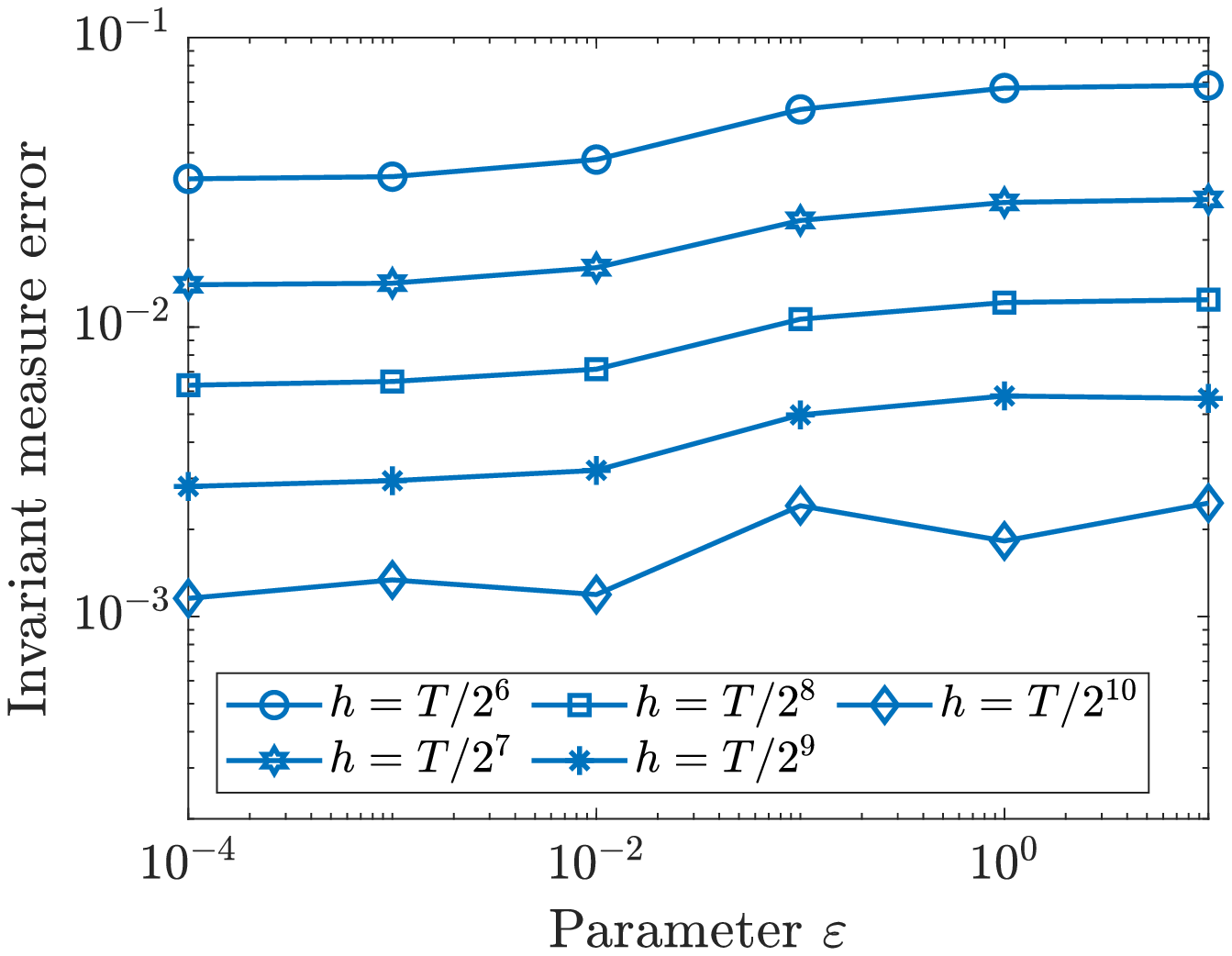}
		\end{center}
	\end{minipage}
	\caption{Error for sampling the invariant measure of penalized Langevin dynamics on a torus in~$\R^3$ of the uniform discretization~\eqref{equation:UA_integrator} and the Euler integrators~\eqref{equation:Euler_EEE} and~\eqref{equation:Euler_Explicit_constrained} for different values of~$\varepsilon$ with~$h=2^{-9}T$ (left), and error curves versus~$\varepsilon$ of the uniformly accurate method for different timesteps~$h=2^{-i}T$ and~$i=6,\dots,10$ (right), with the final time~$T=10$, the maps~$f(x)=-25(x_1-R+r,x_2,x_3)$,~$\phi(x)=\abs{x}^2$, and~$M=10^7$ trajectories.}
	\label{figure:Plot_torus}
\end{figure}

\subsection{Weak approximation on the orthogonal group}

We apply the uniformly accurate method on a compact Lie group (in the spirit of the numerical experiments in~\cite{Zappa18mco,Zhang19eso}) to see how it performs in high dimension and codimension. We choose the orthogonal group~$\Or(m)=\{M\in\R^{m\times m},M^T M=I_m\}$, seen as a submanifold of~$\R^{m^2}$ of codimension~$q=m(m+1)/2$.
We compare the explicit Euler scheme~\eqref{equation:Euler_EEE}, the constrained Euler scheme~\eqref{equation:Euler_Explicit_constrained}, and the new method on~$\MM=\Or(m)$ for~$m=2,\dots,5$ with the parameters~$\varepsilon=0.005$,~$T=1$, and~$h=2^{-7}$.
Note that, as~$h$ and~$\varepsilon$ share the same order of magnitude, the explicit Euler scheme~\eqref{equation:Euler_EEE} can face stability issues, and the solution does not lie on the manifold~$\MM$. Thus, we are in the regime where the convergence results for both Euler schemes~\eqref{equation:Euler_EEE}-\eqref{equation:Euler_Explicit_constrained} do not apply.
We choose~$f=-\nabla V$, where~$V$ is given by
\begin{equation}
\label{equation:potential_O}
V(x)=50\Trace((x-I_{m^2})^T(x-I_{m^2}))
\end{equation}
with the parameters~$\sigma=\sqrt{2}$,~$X_0=I_d$,~$\phi(x)=\Trace(x)$, and~$M=10^6$ trajectories. 
The reference solution for~$J(m)=\E[\phi(X^\varepsilon(T))]$ is computed with the uniformly accurate integrator with~$h_{\text{ref}}=2^{-9}$.
With the factor~$50$ in the potential~\eqref{equation:potential_O}, the trajectories stay close to~$I_{m^2}$, and~$J(m)$ is close to~$\phi(I_{m^2})=m$.
This choice of factor permits one to explore a reasonably small area of~$\Or(m)$, in order to avoid zones close to~$\MM$ where the Gram matrix~$G$ has a bad condition number or is singular, and to reduce the number of trajectories needed.
We observe numerically that replacing the factor~$50$ by~$1$ in~\eqref{equation:potential_O} induces a severe timestep restriction.
We present the results of the experiment in Table~\ref{Table:numerical_results_O}. We omit the results for the explicit Euler scheme~\eqref{equation:Euler_EEE} as the method is inaccurate in this regime (error of size~$1$).
We observe that, in the regime where~$h$ and~$\varepsilon$ share the same order of magnitude, the uniformly accurate integrator performs significantly better than the Euler schemes~\eqref{equation:Euler_Explicit_constrained} and~\eqref{equation:Euler_EEE} for solving the problem~\eqref{equation:Langevin_epsilon}.
In this regime, the Euler method~\eqref{equation:Euler_EEE} faces stability issues, and it is inappropriate to use the constrained Euler scheme~\eqref{equation:Euler_Explicit_constrained} as the solution~$X^\varepsilon(t)$ of~\eqref{equation:Langevin_epsilon} is not close to the solution~$X^0(t)$ of~\eqref{equation:projected_Langevin}.
Moreover, the cost in time of the new method stays the same in average for any value of~$\varepsilon$ (results not included for conciseness).
This confirms numerically the uniform cost of solving the fixed point problem~\eqref{equation:UA_integrator}, as stated in Theorem~\ref{theorem:uniform_fixed_point_problem}.

\begin{table}[H]
	\setcellgapes{3pt}
	\centering
	\begin{tabular}{|c||c|c||c||c|c||c|c|}
	\hline
	$m$ &~$\dim(\MM)$ &~$q$ & \multicolumn{1}{c||}{$J(m)$} &~$\widebar{J}_\text{UA}$ & \multicolumn{1}{c||}{error of~$\widebar{J}_\text{UA}$} &~$\widebar{J}_\text{EC}$ & error of~$\widebar{J}_\text{EC}$\\
	\hhline{|=||=|=||=||=|=||=|=|}
	$2$ & 1 & 3 &~$2.00934$ &~$2.00619$ &~$3.1 \cdot 10^{-3}$ &~$1.99165$ &~$1.8 \cdot 10^{-2}$\\
	\hline
	$3$ & 3 & 6 &~$3.01458$ &~$3.00821$ &~$6.4 \cdot 10^{-3}$ &~$2.97460$ &~$4.0 \cdot 10^{-2}$\\
	\hline
	$4$ & 6 & 10 &~$4.02050$ &~$4.00972$ &~$1.1 \cdot 10^{-2}$ &~$3.94846$ &~$7.2 \cdot 10^{-2}$\\
	\hline
	$5$ & 10 & 15 &~$5.02669$ &~$5.00842$ &~$1.8 \cdot 10^{-2}$ &~$4.91298$ &~$1.1 \cdot 10^{-1}$\\
	\hline
	\end{tabular}
	\caption{Numerical approximation of~$J(m)=\E[\phi(X^\varepsilon(T))]$ for~$2\leq m \leq 5$ with the estimator~$\widebar{J}=M^{-1}\sum_{k=1}^M \phi(X_N^{(k)})$, where~$(X_n)$ is given by the uniform discretization~\eqref{equation:UA_integrator} for~$\widebar{J}_\text{UA}$ and by the constrained Euler scheme~\eqref{equation:Euler_Explicit_constrained} for~$\widebar{J}_\text{EC}$ with their respective errors. The average is taken over~$M=10^6$ trajectories with the potential~\eqref{equation:potential_O},~$\phi(x)=\Trace(x)$, the final time~$T=1$, the stiff parameter~$\varepsilon=0.005$, and the timestep~$h=2^{-7}$.}
	\label{Table:numerical_results_O}
	\setcellgapes{1pt}
\end{table}

\section{Conclusion and future work}
\label{section:future_work}

In this work, we presented a new method for the weak numerical integration of penalized Langevin dynamics evolving in the vicinity of manifolds of any dimension and codimension.
On the contrary of the other existing discretizations, the accuracy of the proposed integrator is independent of the size of the stiff parameter~$\varepsilon$.
Moreover, its cost does not depend on~$\varepsilon$, and it converges to the Euler scheme on the manifold when~$\varepsilon\rightarrow 0$.
Throughout the analysis, we gave an expansion in time of the solution to the penalized Langevin dynamics that is uniform in~$\varepsilon$, as well as new tools for the study of stochastic projection methods for solving stiff SDEs.

Multiple questions arise from the work presented in this paper, with many of great interest for physical applications.
First, it would be interesting to get convergence results with weaker assumptions or to develop uniformly accurate integrators for different penalized dynamics with the same limit when~$\varepsilon \rightarrow 0$ such as the original penalized dynamics~\eqref{equation:modified_Langevin_epsilon} (see~\cite{Ciccotti08pod}).
One could build integrators for penalized dynamics of the form
$$
dX^\varepsilon=f(X^\varepsilon) dt +\sigma dW+\frac{\sigma^2}{4}\nabla\ln(\det(G)) -\frac{1}{\varepsilon}(g G^{-1}\zeta_1)(X^\varepsilon)dt-\frac{1}{\nu}(g G^{-1}\zeta_2)(X^\varepsilon)dt,
$$
where~$\varepsilon$ and~$\nu$ do not share the same order of magnitude, or for constrained dynamics with a penalized term.
One could also build a uniformly accurate numerical scheme with high order in the weak context, or just in the context of the invariant measure (in the spirit of the works~\cite{BouRabee10lra,Leimkuhler13rco,Abdulle14hon,Abdulle15lta,Leimkuhler16tco,Laurent20eab,Laurent21ocf} where numerical schemes of high order for the invariant measure and weak order one were introduced).
Postprocessors~\cite{Vilmart15pif} proved to be an efficient tool for reaching high order for the invariant measure without increasing the cost of the method and could be used in this context.
Moreover, the order conditions presented in~\cite{Laurent20eab} for Runge-Kutta methods for solving Langevin dynamics in~$\R^d$ both in the weak sense and for the invariant measure do not match with the order conditions for solving Langevin dynamics constrained on the manifold~$\MM$, as presented in~\cite{Laurent21ocf}. It would be interesting to create a unified class of high order Runge-Kutta methods with the same order conditions in~$\R^d$, on the manifold~$\MM$ and in the vicinity of~$\MM$.
The discretizations presented in this paper could also be combined with Metropolis-Hastings rejection procedures~\cite{Metropolis53eos,Hastings70mcs}, in the spirit of the works~\cite{Girolami11rml,Brubaker12afo,Lelievre12ldw,Zappa18mco,Lelievre19hmc}, in order to get an exact approximation for the invariant measure with a rejection rate that does not deteriorate in the regime~$\varepsilon\rightarrow 0$.

\bigskip

\noindent \textbf{Acknowledgements.}\
The author would like to thank Gilles Vilmart for helpful discussions.
This work was partially supported by the Swiss National Science Foundation, grants No.\ts 200020\_184614, No.\ts 200021\_162404 and No.\ts 200020\_178752.
The computations were performed at the University of Geneva on the Baobab cluster using the Julia programming language.

\bibliographystyle{abbrv}
\bibliography{Ma_Bibliographie}

\vskip-1ex
\begin{appendices}

\section{Proof of Theorem~\ref{theorem:CV_penalized_constrained}}
\label{section:proof_CV_penalized_constrained}

In this section, we prove the convergence of the penalized Langevin dynamics~\eqref{equation:Langevin_epsilon} to the constrained dynamics~\eqref{equation:projected_Langevin} when~$\varepsilon\rightarrow 0$, as stated in Theorem~\ref{theorem:CV_penalized_constrained}. The proof uses techniques and arguments similar to those in~\cite[Appx.\ts C]{Ciccotti08pod}. However, since we rescaled the stiff term in~\eqref{equation:Langevin_epsilon}, there is no need for a change of time to prove the convergence to the constrained dynamics.

\begin{proof}[Proof of Theorem~\ref{theorem:CV_penalized_constrained}]
In the orthogonal coordinates system given by Assumption~\ref{assumption:regularity_ass}, equation~\eqref{equation:Langevin_epsilon} becomes
\begin{align*}
d\varphi(X^\varepsilon(t))&=(\varphi'f)(X^\varepsilon(t)) dt +\sigma \varphi'(X^\varepsilon(t)) dW(t)\\
&+\frac{\sigma^2}{4}(\varphi' \nabla\ln(\det(G)))(X^\varepsilon(t))dt +\frac{\sigma^2}{2}\Delta \varphi(X^\varepsilon(t)) dt,\\
d\zeta(X^\varepsilon(t))&=(g^T f)(X^\varepsilon(t)) dt +\sigma g^T(X^\varepsilon(t)) dW(t) +\frac{\sigma^2}{4}(g^T \nabla\ln(\det(G)))(X^\varepsilon(t))dt\\
&+\frac{\sigma^2}{2}\Div(g)(X^\varepsilon(t)) dt -\frac{1}{\varepsilon}\zeta(X^\varepsilon(t))dt,
\end{align*}
where we used that~$\varphi'g=0$ and that~$\zeta'=g^T$.
Therefore,~$\zeta(X^\varepsilon(t))$ satisfies
\begin{align*}
\zeta(X^\varepsilon(t))&=\sigma \int_0^t e^{(s-t)/\varepsilon} g^T (X^\varepsilon(s)) dW\\
&+\int_0^t e^{(s-t)/\varepsilon}\Big[g^T f +\frac{\sigma^2}{4}g^T \nabla\ln(\det(G))+\frac{\sigma^2}{2}\Div(g)\Big](X^\varepsilon(s)) dt,
\end{align*}
and a bound on~$\zeta(X^\varepsilon(t))$ is given by
\begin{align}
\label{equation:proof_bound_zeta_X_eps}
\E[\abs{\zeta(X^\varepsilon(t))}^2]&\leq C \int_0^t e^{2(s-t)/\varepsilon} dt+C\Big(\int_0^t e^{(s-t)/\varepsilon}dt\Big)^2
\leq C \varepsilon.
\end{align}

On the other hand, we rewrite~\eqref{equation:projected_Langevin} in the orthogonal coordinates as
\begin{align*}
d\varphi(X^0(t))&=(\varphi'f)(X^0(t)) dt +\sigma \varphi'(X^0(t)) dW\\
&+\frac{\sigma^2}{4}(\varphi' \nabla\ln(\det(G)))(X^0(t))dt+\frac{\sigma^2}{2}\Delta \varphi(X^0(t)) dt,\\
d\zeta(X^0(t))&=0,
\end{align*}
where we used that
$$\frac{1}{2}\varphi' \nabla\ln(\det(G)) +\Delta \varphi=\sum_{i=1}^d \varphi'(\Pi_\MM'(\Pi_\MM e_i) e_i) +\sum_{i=1}^d \varphi''(\Pi_\MM e_i,\Pi_\MM e_i).$$
With Assumption~\ref{assumption:regularity_ass}, we obtain
$$\E[\abs{\varphi(X^\varepsilon(t))-\varphi(X^0(t))}^2]\leq C \int_0^t \E[\abs{X^\varepsilon(s)-X^0(s)}^2]ds,$$
and from~\eqref{equation:proof_bound_zeta_X_eps} we deduce that
$$\E[\abs{\zeta(X^\varepsilon(t))-\zeta(X^0(t))}^2]\leq C \varepsilon.$$
Thus,~$\psi(X^\varepsilon(t))$ satisfies
$$\E[\abs{\psi(X^\varepsilon(t))-\psi(X^0(t))}^2]\leq C \int_0^t \E[\abs{X^\varepsilon(s)-X^0(s)}^2]ds +C\varepsilon,$$
and, as~$\psi^{-1}$ is Lipschitz, we have
$$\E[\abs{X^\varepsilon(t)-X^0(t)}^2]\leq C\int_0^t \E[\abs{X^\varepsilon(s)-X^0(s)}^2]ds +C\varepsilon,$$
which, with the use of the Gronwall lemma, gives the desired estimate.
\end{proof}

\end{appendices}

\end{document}